\numberwithin{equation}{section}
\newtheorem{theorem}{Theorem}[section]
\newtheorem{lemma}[theorem]{Lemma}
\newtheorem{corollary}[theorem]{Corollary}
\newcommand{\ep}{\mbox{\scriptsize\textcircled{$\dagger$}}}
\newcommand{\core}{\mbox{\scriptsize\textcircled{\#}}}
\newcommand{\rg}{{\mathscr{R}}}
\newcommand{\nl}{{\mathscr{N}}}
\newtheorem{example}{Example}%
\newtheorem{definition}{Definition}%
\begin{document}

\title[Dual Core-EP Generalized Inverse and Decomposition]{Dual Core-EP Generalized Inverse and Decomposition}


\author[1]{\fnm{Bibekananda} \sur{Sitha}}\email{p20190066@goa.bits-pilani.ac.in}

\author[1]{\fnm{Jajati Keshari} \sur{Sahoo}}\email{jksahoo@goa.bits-pilani.ac.in}
\equalcont{These authors contributed equally to this work.}

\author*[3]{\fnm{N\'estor} \sur{Thome}}\email{njthome@mat.upv.es}
\equalcont{These authors contributed equally to this work.}

\affil[1]{\orgdiv{Department of Mathematics Goa, India}, \orgname{BITS Pilani, K.K. Birla Goa Campus}, \orgaddress{\street{Zuarinagar}, \city{South Goa}, \postcode{403726}, \state{Goa}, \country{India}}}

\affil[2]{\orgdiv{Instituto Universitario de Matem\'atica Multidisciplinar}, \orgname{Universitat Polit\`ecnica de Val\`encia}, \orgaddress{\street{Camino de Vera}, \city{Valencia}, \postcode{46022}, \state{Valencia}, \country{Spain}}}


\abstract{In this work, we introduce a new type of generalized inverse called dual core-EP generalized inverse (in short DCEPGI) for dual square matrices. We analyze the existence and uniqueness of the DCEPGI inverse and its compact formula using dual Drazin and dual MP inverse. Moreover, some characterizations using core-EP decomposition are obtained. We present a new dual matrix decomposition named the dual core-EP decomposition for square dual matrices. In addition, some relationships with other dual generalized inverses are established. As an application, solutions to some inconsistent system of linear dual equations are derived.}

\keywords{Dual matrix, Dual Moore-Penrose inverse, Dual Drazin inverse, Dual core-EP inverse, Dual core-EP decomposition}



\maketitle

\section{Introduction}\label{sec1}

A dual number is represented by $\widehat{a}=a+\epsilon b$ \cite{dualno}, where $a$ and $b$ are real numbers, and $\epsilon$ is dual unit with $\epsilon\neq 0,~\epsilon ^2=0,~\epsilon 1=1 \epsilon=\epsilon,\text{ and }\epsilon 0=0 \epsilon=0$. The use of dual numbers and their algebraic properties have emerged as robust and convenient tools analyzing mechanical systems. Their popularity has been increasing over the past three decades, particularly in engineering domains, including kinematic analysis \cite{angeles1998application,angeles2012dual}, screw motion \cite{jin2010application}, robotics \cite{gu1987dual,heibeta1986homogeneous}, and  rigid body motion analysis \cite{dualno}. It is established that the collection of dual numbers form a ring structure. Further extensions of dual numbers are dual vector and dual matrices.

In the area of kinematic analysis and machine synthesis, the dual generalized inverses of dual matrices are often employed to solve a variety of problems. It is important to mention that, in contrast to real matrices, dual matrices not always have dual generalized inverses. Due to this reason, the interest among researchers to investigate the existence of dual generalized inverses and to develop efficient algorithms for their computation in cases where they exist has been increasing as well as applications of dual generalized inverses.

Falco et al. addressed the existence criterion for several kinds of dual generalized inverses in \cite{de2018generalized}. Additionally, to demonstrate the usefulness and adaptability of dual generalized inverses, discussions of solutions to various kinematic problems were examined. Udwadia discussed the existence of several types of dual generalized inverses for dual matrices in \cite{udwadia2021does}. A few new fundamental results regarding the sufficient and necessary criteria of different types of dual generalized inverses have been derived. Some innovative and computationally efficient formulas for computing the MPDGI were introduced in \cite{pennestri2018moore}. Udwadia et al. \cite{udwadia2020all} answered whether every dual matrix has dual MP generalized inverses and illustrated that there exist many dual matrices that do not have dual MP inverse. Udwadia \cite{udwadia2021} investigated the characteristics of the DMPGI and applied them to solve linear dual equation systems. Wang \cite{wang2021characterizations} gave a formula for determining the DMPGI and necessary and sufficient criteria for a dual matrix to have the DMPGI. Furthermore, Wang and Gao \cite{wang2023dual} investigated the dual core generalized inverse and the dual index. Liu and Maa \cite{liu} discussed the dual-core generalized inverse for third-order tensors in the framework of $T$-product. The dual group generalized inverse's existence, computation, and applications were examined by Zhong et al. \cite{dualgroup}.

Recently, Zhong and Zhang \cite{dualDrazin} established the concept of dual Drazin inverse (in short DDGI) for dual matrices of arbitrary index, not necessarily one. They also discussed the existence and computational aspects of the dual Drazin inverse, least square solutions, and minimal norm properties of DDGI.  In \cite{dualC-Ndecomp}, the authors introduced the Dual Core nilpotent decomposition (shortly C-N decomposition) and binary relation. Wei et al. \cite{wei} studied the perturbation of Drazin inverse and dual Drazin inverse. The existence of dual $r$-rank decomposition and a few properties have been studied in \cite{wang-rr-deco}.  

Motivated by the work of the dual Drazin inverse \cite{dualDrazin}, we extend the notion of dual-core generalized inverse to dual core-EP (in short CEP) generalized inverse of arbitrary index. Since the core-EP decomposition of a real matrix of arbitrary index always exists, we aim to introduce a new dual matrix decomposition, i.e., the dual core-EP decomposition of a real dual matrix.  
The main contribution of the manuscript as follows:
\begin{itemize}
    \item We introduce the notion of dual CEP generalized inverse for dual matrices. The existence and uniqueness of DCEPGI for square dual matrices are also discussed. 
    \item A few characterizations on dual CEP generalized inverse are discussed.
    \item We introduce dual core-EP decomposition for square real dual matrices of arbitrary index.
    \item A compact formula for computing aspects and relationships with other dual generalized inverses is established.
    \item An application in solving systems of inconsistent linear dual equations is discussed.  
\end{itemize}
The paper is organized as follows. In Section 2, we give some existing definitions and known results of real as well as dual matrices. The existence and uniqueness of dual CEP inverse and its characterizations are presented in Section 3. Section 4 contains the introduction of dual core-EP decomposition for dual matrix along with a compact formula derived. In Section 5, we present the relationships with other dual generalized inverses and establish an application for solving a dual linear system. In Section 6, we conclude with future research problems.

\section{Results}\label{sec2}

First, we present some useful notations and definitions of several generalized inverses. The sets of all dual real and real $m\times n$ matrices are represented by $\mathbb{D}\mathbb{R}^{m\times n}$ and $\mathbb{R}^{m\times n}$, respectively. For $A\in \mathbb{R}^{m\times n}$, the range space, and null space of dual matrix $\widehat{A}$, respectively are denoted by $\rg(\widehat{A}),$ and $\nl(\widehat{A})$.  We denote $I$ and $O$ as the identity and zero matrices of suitable order, respectively, and projection on subspace $F$ along subspace $G$ by $P_{F, G}$ whenever $F$ and $G$ are the direct sum of the entire vector space. The smallest non-negative integer $s$ such that rank$(A^{s+1})$=rank$(A^{s})$, is called the index of  $A$.

A dual matrix $\widehat{A}$ is defined as $\widehat{A}=A+\epsilon A_{0}$, where real matrices $A$ and $A_{0}$ are called the standard and infinitesimal parts of $\widehat{A}$. The dual matrix $\widehat{A}$ is called appreciable if $A\neq O$, otherwise, $\widehat{A}$ is called infinitesimal. If $\widehat{A}=A+ \epsilon A_{0} \in \mathbb{D}\mathbb{R}^{n \times n}$, then the dual transpose is defined as $\widehat{A}^{T}=A^{T}+\epsilon A_{0}^{T}$. 
Next, we use matrix equations to define the dual generalized inverse for the dual matrices. 
\[\text{(1)}~\,\widehat{A}\widehat{X}\widehat{A}=\widehat{A},~\,\text{(2)}~\,\widehat{X}\widehat{A}\widehat{X}=\widehat{X},\,\text{(3)}~\,(\widehat{A}\widehat{X})^{T}=\widehat{A}\widehat{X},~\,\text{(4)}~\,(\widehat{X}\widehat{A})^{T}=\widehat{X}\widehat{A},\]
\[\text{(5)}~\,\widehat{A}\widehat{X}=\widehat{X}\widehat{A},~\,\mbox{($1^k$)}~\,\widehat{X}\widehat{A}^{k+1}=\widehat{A}^k,~\,\text{(6)}~\,\widehat{A}\widehat{X}^2=\widehat{X}.\]
Let $\Omega$ be a nonempty subset of $\{1,2,3,4,5,1^k,6\}$. If a dual matrix $\widehat{X}\in \mathbb{D}\mathbb{R}^{n\times m}$ satisfies equation ($j$) for each $j\in \Omega$ then $\widehat{X}$ is called a $\Omega$-inverse of $\widehat{A}$. We denote such an inverse by $\widehat{A}^{(j)}$  and the set of all $\Omega$-inverse of $\widehat{A}$ by $\widehat{A}\{\Omega\}$. Using these representations, we  define the dual generalized inverses as follows: 
\begin{definition}\cite{pennestri2018moore}
Let $\widehat{A}\in \mathbb{D}\mathbb{R}^{m\times n}$. A matrix $\widehat{X} \in \mathbb{D}\mathbb{R}^{n\times m}$ is called 
\begin{enumerate}
    \item[(a)] an inner dual generalized inverse of $\widehat{A}$ if $\widehat{X}\in \widehat{A}\{1\}$ and is denoted by $\widehat{A}^{(1)}$. 
    \item[(b)] an outer dual generalized inverse of $\widehat{A}$ if $\widehat{X}\in \widehat{A}\{2\}$ and is denoted by $\widehat{A}^{(2)}$. 
    \item[(c)] the dual Moore-Penrose generalized inverse of $\widehat{A}$ if $\widehat{X}\in \widehat{A}\{1,2,3,4\}$ and is denoted by $\widehat{A}^{\dagger}$.
\end{enumerate}
\end{definition}
The Moore-Penrose dual generalized inverse (shortly MPDGI) of a dual matrix $\widehat{A}=A+\epsilon A_{0}$ was introduced by Pennestri et al. \cite{lineraalgorithm}, is denoted by $\widehat{A}^{P}$ and can be computed as $\widehat{A}^{P}=A^{\dagger}-\epsilon A^{\dagger}A_{0}A^{\dagger}$.
\begin{definition}\cite{dualDrazin}
Let $\widehat{A}\in \mathbb{D}\mathbb{R}^{n\times n}$ and $\text{Ind}(A)=m$. If $\widehat{X}\in \widehat{A}\{1^m,2,5\}$, then $\widehat{X}$ is called the dual Drazin generalized inverse of $\widehat{A}$ and is denoted by $\widehat{A}^{D}$.
\end{definition}
The inverse $\widehat{A}^{D}$ is unique if it exists. When $\text{Ind}(A)=1$, it is called the dual group generalized inverse, denoted by $\widehat{A}^{\#}$ \cite{dualgroup}. The following lemma is a result similar to that of \cite{wang2021characterizations}.
\begin{lemma}\label{lem2.3}
    Consider $\widehat{A}=A+\epsilon B \in \mathbb{D}\mathbb{R}^{n \times n}$ with Ind$(A)=m$. Denote $\widehat{A}^{m}={A}^{m}+\epsilon S$, where $S:=\sum_{i=1}^{m} A^{m-i}BA^{i-1}$. Then DMPGI of $\widehat{A}$ exists, and 
    \begin{center}
        $(\widehat{A}^{m})^{\dagger}=(\widehat{A}^{m})^{P}\iff(I-\widehat{A}^{m}(\widehat{A}^{m})^{\dagger})S=O$ and $(I-(\widehat{A}^{m})^{\dagger}\widehat{A}^{m})S=O$.
    \end{center}
    \end{lemma}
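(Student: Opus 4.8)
The plan is to relate the dual Moore–Penrose inverse of the power $\widehat{A}^m = A^m + \epsilon S$ to its formal MPDGI expression $(\widehat{A}^m)^P = (A^m)^\dagger - \epsilon (A^m)^\dagger S (A^m)^\dagger$, which is always well defined since every real matrix has a Moore–Penrose inverse. The key fact to invoke first is the characterization (cf. Wang \cite{wang2021characterizations}) that a dual matrix $\widehat{M} = M + \epsilon N$ has a DMPGI if and only if the two \emph{compatibility conditions} $(I - MM^\dagger)N(I - M^\dagger M) = O$ hold in a suitable sense; more precisely, writing a candidate inverse $\widehat{X} = M^\dagger + \epsilon X_0$ and imposing the four Penrose equations (1)–(4) on $\widehat{A}^m$, the standard part forces $X$'s leading term to be $(A^m)^\dagger$ and the $\epsilon$-part yields a linear system for $X_0$ whose solvability is exactly governed by $(A^m)^\dagger$ and $S$.

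First I would set $M := A^m$, $N := S$, and expand each of equations (1)–(4) applied to $\widehat{M} + \epsilon N$ and $\widehat{X} = M^\dagger + \epsilon X_0$, collecting the $\epsilon^0$ and $\epsilon^1$ terms. The $\epsilon^0$ terms are automatically satisfied because $M^\dagger$ is the real MP inverse of $M$. The $\epsilon^1$ terms give four matrix equations in $X_0$: $M X_0 M + M M^\dagger N + N M^\dagger M = N$ (wait, I should be careful) — in fact the Penrose-(1) $\epsilon$-term reads $M X_0 M + N M^\dagger M + M M^\dagger N$ rearranged appropriately, and the symmetry equations (3),(4) say $(M X_0)^T = M X_0 + (\text{terms in }N, M^\dagger)$ and similarly for $X_0 M$. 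I would then show, by the standard Urbanik/Penrose-type argument, that this system is consistent if and only if the "defect" terms $(I - MM^\dagger)N$ and $N(I - M^\dagger M)$ vanish, and that when it is consistent the unique solution is $X_0 = -M^\dagger N M^\dagger = (\widehat{M}^P)$'s infinitesimal part; the existence of the DMPGI follows, and the equality $(\widehat{A}^m)^\dagger = (\widehat{A}^m)^P$ is then equivalent to those two defect conditions, which is precisely the stated biconditional.

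The verification that $\widehat{A}^m = A^m + \epsilon S$ with $S = \sum_{i=1}^m A^{m-i} B A^{i-1}$ is a short induction on the binomial-type expansion of $(A + \epsilon B)^m$ using $\epsilon^2 = 0$, and I would either state it inline or cite it as folklore; it is used to identify the infinitesimal part of the power correctly. I expect the main obstacle to be the bookkeeping in the four $\epsilon$-level equations and showing rigorously that the \emph{pair} of conditions $(I - MM^\dagger)S = O$ and $(I - M^\dagger M)S = O$ is both necessary and sufficient for the whole system (1)–(4) at the $\epsilon$-level to be solvable — necessity is the delicate direction, since one must extract these two conditions by multiplying the $\epsilon$-equations by appropriate projectors $I - MM^\dagger$ and $I - M^\dagger M$ on the correct sides and using $M M^\dagger M = M$. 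Once necessity and sufficiency are in hand, uniqueness of the DMPGI gives that the solution must coincide with $(\widehat{A}^m)^P$, completing the proof.
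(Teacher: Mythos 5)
The paper gives no written proof of this lemma; it is presented as a direct specialization of Wang's characterization (Lemma~2.4 of Section~2) to the dual matrix $\widehat{A}^m=A^m+\epsilon S$. Your target is the right one, but your proposal has a genuine error at its central step: you claim that the $\epsilon$-level Penrose system for $\widehat{M}=M+\epsilon N$ is consistent if and only if \emph{both} $(I-MM^\dagger)N=O$ and $N(I-M^\dagger M)=O$ hold, and that the solution is then $X_0=-M^\dagger NM^\dagger$. Neither claim is correct. The DMPGI of $\widehat{M}$ exists if and only if the single product condition $(I-MM^\dagger)N(I-M^\dagger M)=O$ holds (this is precisely item (i) of the cited lemma), which is strictly weaker than the pair of separate conditions: for $M=\mathrm{diag}(1,0)$ and $N=\left[\begin{smallmatrix}0&0\\1&0\end{smallmatrix}\right]$ one has $(I-MM^\dagger)N(I-M^\dagger M)=O$ yet $(I-MM^\dagger)N\neq O$. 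Moreover, when the DMPGI exists its infinitesimal part is $-M^\dagger NM^\dagger+(M^TM)^\dagger N^T(I-MM^\dagger)+(I-M^\dagger M)N^T(MM^T)^\dagger$, not $-M^\dagger NM^\dagger$. If your consistency claim were true, both sides of the lemma's biconditional would hold automatically whenever the DMPGI exists, and the statement would be vacuous; the whole content of the lemma is that equality with the MPDGI requires the \emph{stronger} pair of conditions.

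The repair is the paper's implicit route: apply Wang's explicit formula with $M=A^m$, $N=S$, so that $(\widehat{A}^m)^\dagger-(\widehat{A}^m)^P=\epsilon\bigl[((A^m)^TA^m)^\dagger S^T(I-A^m(A^m)^\dagger)+(I-(A^m)^\dagger A^m)S^T(A^m(A^m)^T)^\dagger\bigr]$. The two correction terms have column spaces contained in the mutually orthogonal subspaces $\rg((A^m)^T)$ and $\nl(A^m)$, so their sum vanishes iff each vanishes, and, combined with the existence condition $(I-MM^\dagger)S(I-M^\dagger M)=O$, each vanishing is equivalent to one of the two stated defect conditions. Two further points to address explicitly: the second condition that actually falls out of this computation is $S(I-(A^m)^\dagger A^m)=O$ (projector on the right of $S$), whereas the printed statement places it on the left, so you should reconcile that rather than silently match your version to the text; and the blanket assertion that the DMPGI ``exists'' is not automatic --- it requires the hypothesis of Lemma~2.5 (equivalently, existence of $\widehat{A}^D$), which neither your proposal nor the printed statement supplies.
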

\begin{lemma}\cite{wang2021characterizations}
    Consider $\widehat{A}=A+\epsilon A_0 \in \mathbb{D}\mathbb{R}^{n \times n}$. Then the following  assertions are equivalent:
    \begin{enumerate}[\rm(i)]
    \item $(I-AA^{\dagger})A_0(I-A^{\dagger}A)=O$,
    \item rank$\left(\begin{bmatrix}
    A_0 & A \\A & O
    \end{bmatrix}\right)$= $2$ rank$(A)$,
    ${\widehat{A}}^{\dagger}$ exists.
\end{enumerate}
Moreover, $\widehat{A}^{\dagger}=A^{\dagger}+\epsilon \left(-A^{\dagger}A_0A^{\dagger}+(A^{T}A)^{\dagger}A_0^{T}(I-AA^{\dagger})+(I-A^{\dagger}A)A_0^{T}(AA^{T})^{\dagger}\right)$ if $\widehat{A}^{\dagger}$ exists.
\end{lemma}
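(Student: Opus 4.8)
The statement splits into two essentially independent parts: the equivalence of the first condition with the existence of $\widehat{A}^{\dagger}$ (which will also produce the displayed formula), and the equivalence of the first condition with the rank condition. The plan is to handle them separately. For the first part, write the candidate as $\widehat{X}=X+\epsilon X_{0}$ and impose the four dual Penrose equations $(1)$--$(4)$ coefficient by coefficient. Their standard parts are exactly the four real Penrose equations for $A$, so by uniqueness of the real Moore--Penrose inverse one is forced to have $X=A^{\dagger}$. Writing $P=AA^{\dagger}$ and $Q=A^{\dagger}A$ and setting $X=A^{\dagger}$, the $\epsilon$-part of $(1)$ becomes
\begin{equation*}
AX_{0}A = A_{0}-AA^{\dagger}A_{0}-A_{0}A^{\dagger}A .
\end{equation*}
Applying $(I-P)(\cdot)(I-Q)$ to both sides annihilates the left-hand side, since $(I-P)A=O=A(I-Q)$, and leaves $(I-P)A_{0}(I-Q)=O$; this shows that existence of $\widehat{A}^{\dagger}$ forces the first condition.

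Conversely, assume $(I-P)A_{0}(I-Q)=O$ and take
\begin{equation*}
X_{0}:=-A^{\dagger}A_{0}A^{\dagger}+(A^{T}A)^{\dagger}A_{0}^{T}(I-P)+(I-Q)A_{0}^{T}(AA^{T})^{\dagger}.
\end{equation*}
One then checks directly that $\widehat{X}=A^{\dagger}+\epsilon X_{0}$ satisfies $(1)$--$(4)$. The $\epsilon$-part of $(1)$ is where the hypothesis is consumed: substitution gives $AX_{0}A=-PA_{0}Q$, and this equals $A_{0}-AA^{\dagger}A_{0}-A_{0}A^{\dagger}A$ precisely when $(I-P)A_{0}(I-Q)=O$. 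For $(3)$ and $(4)$ one finds that $AX_{0}+A_{0}A^{\dagger}$ and $X_{0}A+A^{\dagger}A_{0}$ collapse to expressions of the form $Y+Y^{T}$, hence are symmetric; and the $\epsilon$-part of $(2)$ follows from a routine simplification using $A^{\dagger}=(A^{T}A)^{\dagger}A^{T}=A^{T}(AA^{T})^{\dagger}$ together with the projector identities $A^{\dagger}P=A^{\dagger}=QA^{\dagger}$, $Q(A^{T}A)^{\dagger}=(A^{T}A)^{\dagger}$ and $(AA^{T})^{\dagger}P=(AA^{T})^{\dagger}$. This simultaneously yields the existence of $\widehat{A}^{\dagger}$ and the formula in the ``Moreover'' clause.

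For the second part I reduce $\begin{bmatrix} A_{0} & A \\ A & O\end{bmatrix}$ by rank-preserving block operations. Subtracting $A_{0}A^{\dagger}$ times the second block row from the first produces $\begin{bmatrix} A_{0}(I-Q) & A \\ A & O\end{bmatrix}$, and then subtracting the second block column, multiplied on the right by $A^{\dagger}A_{0}(I-Q)$, from the first produces $\begin{bmatrix} E & A \\ A & O\end{bmatrix}$ with $E:=(I-P)A_{0}(I-Q)$. Since $PE=O=EQ$, the column space of $E$ is orthogonal to $\rg(A)$ and the column space of $E^{T}$ is orthogonal to $\rg(A^{T})$; counting column spaces, $\rg\begin{bmatrix} E \\ A\end{bmatrix}$ and $\rg\begin{bmatrix} A \\ O\end{bmatrix}$ intersect only in $0$, so $\operatorname{rank}\begin{bmatrix} E & A \\ A & O\end{bmatrix}=\operatorname{rank}(E)+2\operatorname{rank}(A)$. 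Hence the rank equals $2\operatorname{rank}(A)$ if and only if $E=(I-AA^{\dagger})A_{0}(I-A^{\dagger}A)=O$, closing the cycle of equivalences.

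The main obstacle is the converse direction of the first part: verifying all four $\epsilon$-level Penrose equations for the proposed $X_{0}$ is the longest computation, and it must be carried out while keeping track of the identities tying $A^{\dagger}$ to $(A^{T}A)^{\dagger}$, $(AA^{T})^{\dagger}$ and the orthogonal projectors $P$, $Q$. The only genuinely structural point elsewhere is the orthogonality $\rg(E)\perp\rg(A)$, $\rg(E^{T})\perp\rg(A^{T})$, which drives the rank count; once the standard part of $\widehat{X}$ has been pinned down as $A^{\dagger}$, everything else is bookkeeping.
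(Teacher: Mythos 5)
Your proposal is correct. Note that the paper itself offers no proof of this lemma: it is imported verbatim from \cite{wang2021characterizations}, so there is no in-paper argument to compare against. Your derivation is a sound, self-contained version of the standard one: splitting the four dual Penrose equations into standard and $\epsilon$-parts correctly forces $X=A^{\dagger}$, the necessity of $(I-AA^{\dagger})A_{0}(I-A^{\dagger}A)=O$ follows exactly as you say by sandwiching $AX_{0}A=A_{0}-AA^{\dagger}A_{0}-A_{0}A^{\dagger}A$ between the complementary projectors, and the sufficiency verification for the displayed $X_{0}$ checks out (in particular $AX_{0}+A_{0}A^{\dagger}=(I-AA^{\dagger})A_{0}A^{\dagger}+\bigl((I-AA^{\dagger})A_{0}A^{\dagger}\bigr)^{T}$ is symmetric, and the $\epsilon$-part of equation (2) closes using $A^{\dagger}=(A^{T}A)^{\dagger}A^{T}=A^{T}(AA^{T})^{\dagger}$ without invoking the hypothesis). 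The block-elimination argument for the rank condition is also valid; the only step worth stating more explicitly is that $\operatorname{rank}\begin{bmatrix} E & A\\ A& O\end{bmatrix}=\operatorname{rank}(E)+2\operatorname{rank}(A)$ needs both orthogonality facts, $AA^{\dagger}E=O$ for the trivial intersection of the two block column spaces and $EA^{\dagger}A=O$ to get $\operatorname{rank}\begin{bmatrix} E\\ A\end{bmatrix}=\operatorname{rank}(E)+\operatorname{rank}(A)$, and you do record both.
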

\begin{lemma}\cite{dualDrazin}
    Consider $\widehat{A}=A+\epsilon B \in \mathbb{D}\mathbb{R}^{n \times n}$ with  Ind$(A)=m$. Denote $\widehat{A}^{m}={A}^{m}+\epsilon S$, where $S:=\sum_{i=1}^{m} A^{m-i}BA^{i-1}$. Then the following  assertions are equivalent:
    \begin{enumerate}[\rm(i)]
        \item $\widehat{A}^{D}$ exists,
        \item $\widehat{A}=P\begin{bmatrix}
T & O \\O & N
\end{bmatrix}P^{-1}+\epsilon P\begin{bmatrix}
B_{1} & B_{2}\\B_{3} & B_{4}
\end{bmatrix}P^{-1}$ where  $N^{m}=O$, $\sum_{i=1}^{m} N^{m-i}B_4 N^{i-1}=O$, and $T^{-1}$ exists. 
\item $(I-AA^{D})S(I-AA^{D})=O$,
\item rank$\left(\begin{bmatrix}
S & A^{m} \\A^{m} & O
\end{bmatrix}\right)$=$2$ rank$(A^{m})$,
\item $(\widehat{A}^{m})^{\dagger}$ exists.
\end{enumerate}
Moreover, if $\widehat{A}^{D}$ exists, then \begin{equation}\label{A^D}
\widehat{A}^{D}=A^D+\epsilon \left(-A^{D}BA^{D}+\sum_{i=0}^{m-1} (A^{D})^{i+2}BA^{i}(I-AA^{D})+\sum_{i=0}^{m-1}(I-AA^{D})A^{i}B(A^{D})^{i+2}\right).
\end{equation}

\end{lemma}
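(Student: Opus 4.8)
The plan is to pull everything back to the core–nilpotent decomposition of the standard part. Since $\text{Ind}(A)=m$, there is an invertible $P$ with $A=P(T\oplus N)P^{-1}$, $T$ invertible and $N^m=O$; write $B=P\!\begin{bmatrix}B_1&B_2\\B_3&B_4\end{bmatrix}\!P^{-1}$ conformably, so that $A^D=P(T^{-1}\oplus O)P^{-1}$ and $AA^D=A^DA=P(I\oplus O)P^{-1}$. Looking for $\widehat X=X_0+\epsilon X_1\in\widehat A\{1^m,2,5\}$, I would split each of the three defining equations into its standard and infinitesimal parts. The standard parts are exactly $X_0AX_0=X_0$, $AX_0=X_0A$, $X_0A^{m+1}=A^m$, which characterise the (always existing, unique) Drazin inverse, forcing $X_0=A^D$. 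Hence $\widehat A^D$ exists iff the remaining linear system for $X_1$ is consistent, and carrying this out yields (i)$\Leftrightarrow$(ii) together with formula \eqref{A^D}.

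The infinitesimal parts are $AX_1-X_1A=A^DB-BA^D$ (from (5)), $A^DAX_1+X_1AA^D+A^DBA^D=X_1$ (from (2)), and $A^DS'+X_1A^{m+1}=S$ (from $(1^m)$), where $S'$ denotes the infinitesimal part of $\widehat A^{m+1}$. Partitioning $X_1=P\!\begin{bmatrix}Y_1&Y_2\\Y_3&Y_4\end{bmatrix}\!P^{-1}$, the equation from (2) forces $Y_1=-T^{-1}B_1T^{-1}$ and $Y_4=O$ and leaves $Y_2,Y_3$ free; the equation from (5) then reduces to the Sylvester equations $TY_2-Y_2N=T^{-1}B_2$ and $Y_3T-NY_3=B_3T^{-1}$, which, as $T$ is invertible and $N$ nilpotent, have the unique solutions $Y_2=\sum_{k=0}^{m-1}T^{-(k+2)}B_2N^k$ and $Y_3=\sum_{k=0}^{m-1}N^kB_3T^{-(k+2)}$. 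So $X_1$ is already uniquely determined by the equations from (2) and (5), and writing the four blocks back through $P$ is exactly formula \eqref{A^D}. It then remains to substitute this $X_1$ into the equation from $(1^m)$: partitioning $S=\sum_{i=1}^m A^{m-i}BA^{i-1}=P\!\begin{bmatrix}S_{11}&S_{12}\\S_{21}&S_{22}\end{bmatrix}\!P^{-1}$ into the corresponding partial sums, a short computation (using $N^m=O$ and the values of $Y_1,\dots,Y_4$) shows the $(1,1)$, $(1,2)$ and $(2,1)$ blocks of $A^DS'+X_1A^{m+1}=S$ hold identically, while the $(2,2)$ block collapses to $S_{22}=\sum_{i=1}^m N^{m-i}B_4N^{i-1}=O$, which is precisely the condition in (ii). This gives (i)$\Leftrightarrow$(ii), the uniqueness of $\widehat A^D$, and \eqref{A^D}.

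For the remaining equivalences I would use $I-AA^D=P(O\oplus I)P^{-1}$ together with $AA^D=A^DA$. First, $(I-AA^D)S(I-AA^D)=P(O\oplus S_{22})P^{-1}$, so (ii)$\Leftrightarrow$(iii). Next, conjugating $\begin{bmatrix}S&A^m\\A^m&O\end{bmatrix}$ by $\mathrm{diag}(P,P)$ and applying block row and column operations that use the invertibility of $T^m$ to clear $S_{11},S_{12},S_{21}$, one reduces the matrix to a block-diagonal form of rank $2\,\mathrm{rank}(A^m)+\mathrm{rank}(S_{22})$; hence the rank equals $2\,\mathrm{rank}(A^m)$ iff $S_{22}=O$, i.e. (iii)$\Leftrightarrow$(iv). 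Finally, applying Wang's existence criterion for the dual Moore–Penrose inverse (stated above) to $\widehat A^m=A^m+\epsilon S$, $(\widehat A^m)^{\dagger}$ exists iff $(I-A^m(A^m)^{\dagger})S(I-(A^m)^{\dagger}A^m)=O$; since $\rg(A^m)=\rg(AA^D)$ and $\nl(A^m)=\rg(I-AA^D)$, replacing each of these two orthogonal projectors by the oblique projector $I-AA^D$ does not change whether the triple product vanishes, which gives (iii)$\Leftrightarrow$(v).

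The hard part will be the bookkeeping in the equation from $(1^m)$: one has to identify the partial sums $S_{ij}$ correctly, see why three of the four blocks vanish automatically once $Y_1,\dots,Y_4$ are inserted, and recognise the surviving $(2,2)$ obstruction as the nilpotent condition in (ii). A secondary subtlety is the step (iii)$\Leftrightarrow$(v): although $I-A^m(A^m)^{\dagger}$, $I-(A^m)^{\dagger}A^m$ and $I-AA^D$ are three different projectors, they have the relevant ranges and null spaces in common, so the triple product vanishes for one choice iff it does for the other; making this precise relies on the elementary observation that $QP=P$ whenever $P$ and $Q$ both project onto the same subspace, not on any further computation.
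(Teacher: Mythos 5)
This lemma is imported verbatim from the reference on the dual Drazin inverse and the paper you are reading gives no proof of it, so there is nothing internal to compare against; your argument has to stand on its own, and it does. The reduction to the core--nilpotent decomposition $A=P(T\oplus N)P^{-1}$, the identification of the standard part of $\widehat X$ with $A^D$, the block analysis showing that equations $(2)$ and $(5)$ pin down $X_1$ uniquely (with $Y_1=-T^{-1}B_1T^{-1}$, $Y_4=O$, and $Y_2,Y_3$ given by the two Sylvester equations), and the observation that the only surviving obstruction in $(1^m)$ is the $(2,2)$ block $S_{22}=\sum_{i=1}^m N^{m-i}B_4N^{i-1}=O$ are all correct; I verified that the $(1,1)$, $(1,2)$ and $(2,1)$ blocks of the $(1^m)$ equation do hold identically for your $Y_j$, and that the resulting blocks of $X_1$ reassemble into formula \eqref{A^D}. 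The rank argument for (iii)$\Leftrightarrow$(iv) via clearing $S_{11},S_{12},S_{21}$ with the invertible copies of $T^m$ is the standard one and gives rank $=2\,\mathrm{rank}(A^m)+\mathrm{rank}(S_{22})$ as you say.

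Two small points you should make explicit if you write this up. First, equation $(5)$ also imposes conditions on the $(1,1)$ and $(2,2)$ blocks, namely $TY_1-Y_1T=T^{-1}B_1-B_1T^{-1}$ and $NY_4-Y_4N=O$; these are automatically satisfied by the values of $Y_1$ and $Y_4$ forced by equation $(2)$, but that consistency check is part of the proof, not a triviality to be skipped. Second, in the step (iii)$\Leftrightarrow$(v) you actually need two projector identities, not one: writing $E=I-AA^D$, $Q_1=I-A^m(A^m)^{\dagger}$, $Q_2=I-(A^m)^{\dagger}A^m$, you use that $E$ and $Q_1$ have the same null space $\rg(A^m)$ (giving $EQ_1=E$ and $Q_1E=Q_1$) and that $E$ and $Q_2$ have the same range $\nl(A^m)$ (giving $Q_2E=E$ and $EQ_2=Q_2$), whence $ESE=E(Q_1SQ_2)E$ and $Q_1SQ_2=Q_1(ESE)Q_2$. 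Your closing remark only cites the same-range identity; both are needed, one on each side of $S$. With those details filled in, the proof is complete.
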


\begin{lemma}\label{lemcepdecomp}\cite{WangcEPD}
Consider $A \in \mathbb{R}^{n \times n}$ with rank$(A^m)=r$ and Ind$(A)=m$. Then $A=A_1+N$, where $A_1$ and $N$ satisfies 
\begin{enumerate}[\rm(i)]
    \item $A_1^{T}N=NA_1=O$;
    \item Ind$(A_1)=1$;
    \item $N^m=O$.
    \end{enumerate}
Mooreover, the decomposition is unique and \begin{equation} \label{eqncEPD}
T= U \begin{bmatrix}
T_{1} & T_{2}\\
O & O
\end{bmatrix} U^{T},~
N=U\left[\begin{array}{cc}
O & O \\
O & N
\end{array}\right] U^{T},~ A= U \begin{bmatrix}
T_{1} & T_{2} \\
O & N
\end{bmatrix} U^{T},
 \end{equation}
where $U \in \mathbb{R}^{n \times n}$ is unitary, $T_{1} \in \mathbb{R}^{r \times r}$ is nonsingular,  and $N \in \mathbb{R}^{(n-r) \times(n-r)}$ is nilpotent with nilpotency index $m$. 
\end{lemma}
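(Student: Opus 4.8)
The plan is to build the decomposition out of the $A$-invariant subspace $\rg(A^m)$ and then to prove uniqueness by showing that \emph{every} pair $(A_1,N)$ satisfying (i)--(iii) is forced to reproduce exactly that subspace, which yields both the block formulas in \eqref{eqncEPD} and uniqueness.

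For existence, I would first record the structural fact that $\mathrm{Ind}(A)=m$ gives $\mathrm{rank}(A^{m+1})=\mathrm{rank}(A^m)$, hence $\rg(A^{m+1})=\rg(A^m)$ and therefore $A\,\rg(A^m)=\rg(A^m)$: the subspace $\rg(A^m)$ is $A$-invariant of dimension $r$. Then I would pick a unitary $U$ whose first $r$ columns form an orthonormal basis of $\rg(A^m)$ and whose remaining columns span $\rg(A^m)^{\perp}=\nl((A^m)^T)$; invariance makes $A=U\left[\begin{smallmatrix}T_1 & T_2\\ O & N\end{smallmatrix}\right]U^T$ block-upper-triangular. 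Next I would verify the two block facts: $T_1$ is nonsingular because it is the matrix of the restriction $A|_{\rg(A^m)}$, which is onto (it sends $\rg(A^m)$ onto $\rg(A^{m+1})=\rg(A^m)$) hence bijective; and $N^m=O$ because $A^m=U\left[\begin{smallmatrix}T_1^m & \star\\ O & N^m\end{smallmatrix}\right]U^T$, and clearing the $\star$-block by right multiplication by a unit upper-triangular matrix built from $T_1^{-m}$ shows $\mathrm{rank}(A^m)=r+\mathrm{rank}(N^m)$, forcing $\mathrm{rank}(N^m)=0$. Finally I would set $A_1:=U\left[\begin{smallmatrix}T_1 & T_2\\ O & O\end{smallmatrix}\right]U^T$ and $N:=U\left[\begin{smallmatrix}O & O\\ O & N\end{smallmatrix}\right]U^T$ and read off (i)--(iii) from block multiplication: $A_1^TN=NA_1=O$ because each off-diagonal block meets a zero block, $N^m=O$ by the previous step, and $\mathrm{Ind}(A_1)=1$ because $\mathrm{rank}(A_1^2)=\mathrm{rank}[\,T_1^2\ \ T_1T_2\,]=r=\mathrm{rank}(A_1)$ using that $T_1$ is nonsingular; the same identity $\mathrm{rank}(A^k)=r+\mathrm{rank}(N^k)$ pins the nilpotency index of $N$ to exactly $m$, and \eqref{eqncEPD} is precisely this construction.

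For uniqueness, I would take an arbitrary pair $A=A_1+N$ obeying (i)--(iii) and extract subspace information from the algebraic relations: $NA_1=O$ gives $\rg(A_1)\subseteq\nl(N)$, and $A_1^TN=O$, i.e. $N^TA_1=O$, gives $\rg(A_1)\perp\rg(N)$. Passing to the orthogonal splitting $\mathbb{R}^n=\rg(A_1)\oplus\rg(A_1)^{\perp}$, the matrix $A_1$ has zero bottom block-row (its range is $\rg(A_1)$) while $N$ has zero first block-column ($N$ annihilates $\rg(A_1)$) and zero first block-row ($\rg(N)\subseteq\rg(A_1)^{\perp}$), so $A$ itself is block-upper-triangular there and $A_1=P_{\rg(A_1)}A$. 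Using $\mathrm{Ind}(A_1)=1$ I would conclude (by the same rank argument as above) that the leading block of $A_1$ is nonsingular, while $N^m=O$ kills the trailing block of $A^m$; then $A^m$ has all of its columns in $\rg(A_1)$ with a nonsingular leading block, so $\rg(A^m)=\rg(A_1)$. Hence the subspace $\rg(A_1)$ is forced to be $\rg(A^m)$ no matter which decomposition we started from, and therefore $A_1=P_{\rg(A^m)}A$ and $N=(I-P_{\rg(A^m)})A$ are determined by $A$ alone; transporting back to the unitary $U$ of the existence step recovers \eqref{eqncEPD} uniquely.

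The step I expect to be the real obstacle is the one inside uniqueness: turning the three algebraic identities into the statement $\rg(A_1)=\rg(A^m)$. The identities alone only say $\rg(A_1)$ is \emph{some} subspace on which $A$ is block-triangular; one must use $\mathrm{Ind}(A_1)=1$ to force a nonsingular pivot block (hence $\dim\rg(A_1)=r$ and $\rg(A_1)\subseteq\rg(A^m)$) and separately use the $A$-invariance of $\rg(A^m)$ for the reverse inclusion. Secondary nuisances are keeping the transpose/non-transpose versions of (i) straight and handling the degenerate case $r=0$ (nilpotent $A$), in which $A_1=O$ and the clause $\mathrm{Ind}(A_1)=1$ must be read under the usual convention.
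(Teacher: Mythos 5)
Your proof is correct; note that the paper itself offers no proof of this lemma --- it is quoted verbatim from the cited source \cite{WangcEPD} --- and your argument (Schur triangularization relative to the $A$-invariant subspace $\rg(A^m)$ for existence, then forcing $\rg(A_1)=\rg(A^m)$ from conditions (i)--(iii) for uniqueness) is essentially the standard proof given there. The rank identity $\mathrm{rank}(A^k)=r+\mathrm{rank}(N^k)$ and the pivot-block argument from $\mathrm{Ind}(A_1)=1$ are exactly the right levers, and you correctly flag the only delicate step.
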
 
Consider $A$ as in the form of \eqref{eqncEPD}, then from \cite[Theorem 3.7]{ferryaDrazin}, \begin{equation}\label{Drazincepdcmp}
    A^{D}= U \begin{bmatrix}
T_{1}^{-1} & (T_{1}^{m+1})^{-1}\Tilde{T}\\
O & O
\end{bmatrix} U^{T},\end{equation} where $\Tilde{T}=\sum_{i=0}^{m-1} T_{1}^{i}T_{2}N^{m-i-1}$.
Also, from \cite{WangcEPD}, \begin{equation}\label{cepdcmp}
    A^{\ep}= U \begin{bmatrix}
T_{1}^{-1} & O\\
O & O
\end{bmatrix} U^{T}.\end{equation} For $B_1 \in \mathbb{R}^{r \times r}$, $B_4 \in \mathbb{R}^{(n-r) \times(n-r)}$, the expression of 
\begin{equation}\label{Bdcomp}
    U^{T}BU=\begin{bmatrix}
B_{1} & B_{2}\\
B_{3} & B_{4}
\end{bmatrix}
\end{equation}
yields \begin{equation}\label{Ahatdcomp}
    \widehat{A}= U \begin{bmatrix}
T_{1} & T_{2}\\
O & N
\end{bmatrix} U^{T}+\epsilon U \begin{bmatrix}
B_{1} & B_{2}\\
B_{3} & B_{4}
\end{bmatrix} U^{T}.
\end{equation}
\begin{lemma}(\cite{Gao2017})\label{lem2.8}
Consider  $A \in \mathbb{R}^{n \times n}$ with $\text{Ind}(A)=m$. Then $A^{\ep}=A^{D}A^{m}(A^{m})^{\dagger}$.
\end{lemma}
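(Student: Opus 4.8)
The plan is to evaluate the right-hand side $A^{D}A^{m}(A^{m})^{\dagger}$ by means of the core-EP decomposition of $A$ recalled in Lemma~\ref{lemcepdecomp}, together with the explicit block expressions \eqref{Drazincepdcmp} and \eqref{cepdcmp}. Concretely, by Lemma~\ref{lemcepdecomp} I would write $A=U\begin{bmatrix}T_{1}&T_{2}\\O&N\end{bmatrix}U^{T}$ with $U\in\mathbb{R}^{n\times n}$ unitary, $T_{1}\in\mathbb{R}^{r\times r}$ nonsingular, and $N\in\mathbb{R}^{(n-r)\times(n-r)}$ nilpotent with $N^{m}=O$, and then compute each of the three factors in this coordinate system.

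First I would prove, by an easy induction on $k$, that $A^{k}=U\begin{bmatrix}T_{1}^{k}&\sum_{i=0}^{k-1}T_{1}^{i}T_{2}N^{k-1-i}\\O&N^{k}\end{bmatrix}U^{T}$. Taking $k=m$ and using $N^{m}=O$ gives $A^{m}=U\begin{bmatrix}T_{1}^{m}&\widetilde{T}\\O&O\end{bmatrix}U^{T}$, where $\widetilde{T}=\sum_{i=0}^{m-1}T_{1}^{i}T_{2}N^{m-1-i}$ is precisely the quantity appearing in \eqref{Drazincepdcmp}. Since $T_{1}^{m}$ is nonsingular, the first $r$ columns of the middle block already span the subspace of vectors whose last $n-r$ coordinates vanish, so $\rg(A^{m})=U\big(\mathbb{R}^{r}\times\{0\}\big)$.

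Next, $A^{m}(A^{m})^{\dagger}$ is the orthogonal projector onto $\rg(A^{m})$, and since $U$ is orthogonal this projector equals $U\begin{bmatrix}I_{r}&O\\O&O\end{bmatrix}U^{T}$. Multiplying on the left by the expression \eqref{Drazincepdcmp} for $A^{D}$ then yields
\[
A^{D}A^{m}(A^{m})^{\dagger}
=U\begin{bmatrix}T_{1}^{-1}&(T_{1}^{m+1})^{-1}\widetilde{T}\\O&O\end{bmatrix}\begin{bmatrix}I_{r}&O\\O&O\end{bmatrix}U^{T}
=U\begin{bmatrix}T_{1}^{-1}&O\\O&O\end{bmatrix}U^{T}=A^{\ep},
\]
the last equality being exactly \eqref{cepdcmp}. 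This finishes the argument.

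The only mildly delicate points are the identification $\rg(A^{m})=U(\mathbb{R}^{r}\times\{0\})$ and the standard fact that $A^{m}(A^{m})^{\dagger}$ is the associated orthogonal projector; both are routine. One could instead bypass the projector remark by writing the full-rank factorization $A^{m}=U\begin{bmatrix}I_{r}\\O\end{bmatrix}\big[\,T_{1}^{m}\ \ \widetilde{T}\,\big]U^{T}$ and invoking the reverse-order law for Moore--Penrose inverses of full-rank factorizations, but since only the product $A^{m}(A^{m})^{\dagger}$ is needed, the projector argument is the shortest route. A fully coordinate-free alternative would be to verify that $A^{D}A^{m}(A^{m})^{\dagger}$ satisfies the defining properties of $A^{\ep}$ (namely $\rg$ equal to $\rg(A^m)$, it is an outer inverse, and $A^{\ep}A^{m+1}=A^{m}$), but this seems less economical than the direct block computation given the formulas already available in the excerpt.
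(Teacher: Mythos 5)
Your argument is correct. Note that the paper itself offers no proof of this lemma --- it is imported verbatim from \cite{Gao2017} --- so there is nothing internal to compare against; judged on its own, your block computation is a valid and self-contained verification. The induction giving $A^{k}=U\begin{bmatrix}T_{1}^{k}&\sum_{i=0}^{k-1}T_{1}^{i}T_{2}N^{k-1-i}\\O&N^{k}\end{bmatrix}U^{T}$ is right, the identification of $A^{m}(A^{m})^{\dagger}$ with $U\begin{bmatrix}I_{r}&O\\O&O\end{bmatrix}U^{T}$ uses only that $T_{1}^{m}$ is nonsingular and $U$ is orthogonal, and multiplying by \eqref{Drazincepdcmp} does collapse the $(1,2)$ block and return exactly \eqref{cepdcmp}. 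This is essentially the standard proof and is the one most consistent with the machinery (Lemma~\ref{lemcepdecomp} and the displayed formulas for $A^{D}$ and $A^{\ep}$) that the paper already has on hand.
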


Next, we recall three important definitions that have already  been specified in \cite{dualC-Ndecomp}.
\begin{definition}
    Let $A$ be of the form \eqref{eqncEPD}, $\widehat{A}=A+\epsilon B \in \mathbb{D}\mathbb{R}^{n \times n}$ and Ind$(A)=m$.  The appreciable index of $\widehat{A}$ is the index of the standard part $A$ and is denoted by $A\text{Ind}(\widehat{A})$.  
\end{definition}
\begin{definition}
    Let $\widehat{A}=A+\epsilon B \in \mathbb{D} \mathbb{R}^{n \times n}$ with $A\text{Ind}(\widehat{A})=m$ and rank$(A^{m})=t$. Denote $\widehat{A}^{m}={A}^{m}+\epsilon S$, where $S:=\sum_{i=1}^{m} A^{m-i}BA^{i-1}$. The dual index of $\widehat{A}$ is the least positive integer $k~(m\leq k \leq 2m)$ such that $$A\text{rank}(\widehat{A}^{k})=\text{ rank}(\widehat{A}^{k}).$$
\end{definition}
\begin{definition}
    Let $\widehat{N}=N+\epsilon N_{0} \in \mathbb{D} \mathbb{R}^{n \times n}$. Then $\widehat{N}$ is called $k$-dual nilpotent matrix if $\widehat{N}^{k}=O$ for a  positive integer $k$.
\end{definition}
Using the analogy of the real case \cite{PrasadMo14}, the dual core-EP inverse can be defined as:
\begin{definition}
    Let $\widehat{A}=A+\epsilon B \in \mathbb{D} \mathbb{R}^{n \times n}$ with $\text{Ind}(A)=m$. Then $\widehat{X} \in \mathbb{D R}^{n \times n}$ is called the dual CEP inverse of $\widehat{A}$ if it satisfies $$(\widehat{A}\widehat{X})^{T}=\widehat{A}\widehat{X},~\widehat{A}\widehat{X}^{2}=\widehat{X},~\widehat{X}\widehat{A}^{m+1}=\widehat{A}^{m},$$and is denoted by $\widehat{A}^{\ep}$.
\end{definition}

\section{Dual Core-EP Generalized inverses}\label{sec3}

In this section, we introduce the concept of DCEP generalized inverses specifically for dual matrices and explore various characterizations of these inverses. Additionally, we examine the connections between DCEP generalized inverses and other forms of dual generalized inverses.
\begin{lemma}\label{lemsyst1}
    Let $\widehat{A}=A+\epsilon B \in \mathbb{D} \mathbb{R}^{n \times n}$ and Ind$(A)=m$. Denote $\widehat{A}^{m}={A}^{m}+\epsilon S$, where $S:=\sum_{i=1}^{m} A^{m-i}BA^{i-1}$. Then  $\widehat{X}= X+\epsilon R \in \mathbb{D} \mathbb{R}^{n \times n}$ is called   dual CEP inverse of $\widehat{A}$ if and only if $X=A^{\ep}$ and 
    \begin{equation}\label{syst1}
        \begin{alignedat}{3}
(AR+BA^{\ep})^{T}=AR+BA^{\ep},\\B(A^{\ep})^{2}+AA^{\ep}R+ARA^{\ep}=R,\\RA^{m+1}+A^{\ep}AS+A^{\ep}BA^{m}=S.
    \end{alignedat}
    \end{equation}
\end{lemma}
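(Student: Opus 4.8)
The plan is to reduce each dual-matrix equation appearing in the definition of $\widehat{A}^{\ep}$ to its standard part and its infinitesimal ($\epsilon$-coefficient) part, to observe that the standard parts are exactly the equations characterising the ordinary core-EP inverse $A^{\ep}$, and then to read off the infinitesimal parts as the system \eqref{syst1}.

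Writing $\widehat{X}=X+\epsilon R$, the first step is to expand the three relevant products using $\epsilon^{2}=O$: one has $\widehat{A}\widehat{X}=AX+\epsilon(AR+BX)$; since $\widehat{X}^{2}=X^{2}+\epsilon(XR+RX)$, also $\widehat{A}\widehat{X}^{2}=AX^{2}+\epsilon(AXR+ARX+BX^{2})$; and, using $\widehat{A}^{m}=A^{m}+\epsilon S$, $\widehat{A}^{m+1}=\widehat{A}\,\widehat{A}^{m}=A^{m+1}+\epsilon(AS+BA^{m})$, whence $\widehat{X}\widehat{A}^{m+1}=XA^{m+1}+\epsilon(XAS+XBA^{m}+RA^{m+1})$. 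A short check from $S=\sum_{i=1}^{m}A^{m-i}BA^{i-1}$ confirms $AS+BA^{m}=SA+A^{m}B=\sum_{i=0}^{m}A^{m-i}BA^{i}$, so the two ways of writing the $\epsilon$-part of $\widehat{A}^{m+1}$ agree.

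Next, equate standard and infinitesimal parts in $(\widehat{A}\widehat{X})^{T}=\widehat{A}\widehat{X}$, $\widehat{A}\widehat{X}^{2}=\widehat{X}$ and $\widehat{X}\widehat{A}^{m+1}=\widehat{A}^{m}$. The standard parts read $(AX)^{T}=AX$, $AX^{2}=X$, $XA^{m+1}=A^{m}$, which are precisely the defining equations of the (real) core-EP inverse of $A$ with $\text{Ind}(A)=m$; since that inverse always exists and is unique, this forces $X=A^{\ep}$. Substituting $X=A^{\ep}$ into the infinitesimal parts $(AR+BX)^{T}=AR+BX$, $AXR+ARX+BX^{2}=R$ and $XAS+XBA^{m}+RA^{m+1}=S$ yields exactly the three equations in \eqref{syst1}. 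Conversely, if $X=A^{\ep}$ and $R$ satisfies \eqref{syst1}, then the standard parts hold because $A^{\ep}$ satisfies the core-EP equations and the infinitesimal parts hold by \eqref{syst1}, so $\widehat{X}=A^{\ep}+\epsilon R$ satisfies the three dual equations, i.e.\ it is the dual CEP inverse of $\widehat{A}$.

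I do not anticipate a genuine obstacle here: the argument is essentially dual-number bookkeeping. The only points requiring a little care are keeping the two expressions for the $\epsilon$-part of $\widehat{A}^{m+1}$ consistent (equivalently, checking the identity involving $S$), and invoking existence and uniqueness of the ordinary core-EP inverse to pin down $X=A^{\ep}$ before extracting \eqref{syst1} from the infinitesimal parts.
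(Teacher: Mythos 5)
Your proposal is correct and takes essentially the same route as the paper: expand the three defining dual equations, identify the standard parts with the real core-EP equations (which forces $X=A^{\ep}$ by existence and uniqueness of the real core-EP inverse), and read off the infinitesimal parts as system \eqref{syst1}. The only difference is your explicit consistency check that $AS+BA^{m}=SA+A^{m}B$ for the $\epsilon$-part of $\widehat{A}^{m+1}$, which the paper leaves implicit.
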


\begin{proof}
It follows from the definition of the dual CEP inverse that $\widehat{X}= X+\epsilon R$ is called the DCEPGI of $\widehat{A}$ if and only if  
\begin{align*}
    ((A+\epsilon B)(X+\epsilon R))^{T}=(A+\epsilon B)(X+\epsilon R),\\(A+\epsilon B)(X+\epsilon R)^{2}=(X+\epsilon R),\\(X+\epsilon R)(A+\epsilon B)^{m+1}=(A+\epsilon B)^{m}.
\end{align*} 
By using $\widehat{A}^{m}={A}^{m}+\epsilon S$, where $S:=\sum_{i=1}^{m} A^{m-i}BA^{i-1}$, the above three equations can be reduced to 
\begin{align*}
    (AX)^{T}+\epsilon(AR+BX)^{T}=AX+\epsilon(AR+BX),\\AX^{2}+\epsilon(BX^{2}+AXR+ARX)=X+\epsilon R,\\XA^{m+1}+\epsilon(RA^{m+1}+XAS+XBA^{m})={A}^{m}+\epsilon S.
\end{align*} Therefore, it is clear from the prime parts of above equations that $(AX)^{T}=AX,~AX^{2}=X,~XA^{m+1}=A^{m}$ i.e., $X=A^{\ep}$. Furthermore, it can be noted from the dual parts of the above equations that system \ref{syst1} satisfied.
\end{proof}
In \cite{PrasadMo14}, the authors showed that the CEP inverse for real square matrices  always exists and is unique. However, it is possible that a dual square matrix may not always have a DCEPGI. The following example demonstrates that a square dual matrix does not have a DCEP generalized inverse.

\begin{example}\rm
Let  $\widehat{A}=\begin{bmatrix}
1 &2 & -2 \\0 &0 & -2\\0 &0 & 0
\end{bmatrix}+\epsilon\begin{bmatrix}
1 & 5 & -2 \\0 &3 & -2\\2 &0 & 4
\end{bmatrix}$. Then $A^{\ep}=\begin{bmatrix}
1 & 0&0 \\0 &0 & 0\\0 &0 & 0
\end{bmatrix}$. If $\widehat{A}^{\ep}$ exists, then by Lemma \ref{lemsyst1}, it can be written as  $\widehat{A}^{\ep}=\begin{bmatrix}
1 & 0&0 \\0 &0 & 0\\0 &0 & 0
\end{bmatrix}+\epsilon\begin{bmatrix}
r_{1} & r_{2}&r_{3} \\r_{4} &r_{5} & r_{6}\\r_{7} &r_{8} & r_{9}
\end{bmatrix}$. From direct calculation, it shows that $RA^{3}+A^{\ep}AS+A^{\ep}BA^{2}=\begin{bmatrix}
r_{1}-13 & 2r_{1}+7 & -6r_{1}-52\\ r_{4} & 2r_{4} & -6r_{4}\\ r_{7} &2r_{7}&-6r_{7} \end{bmatrix}\neq \begin{bmatrix}
-2 & 13& -26 \\-4 &0 & -14\\2 &4 & -4
\end{bmatrix}=S$. Thus, it does not hold third condition of system \eqref{syst1} since $r_{4}=-4$ and $r_{4}=0$, which is not possible hence $\widehat{A}^{\ep}$ does not exist for this matrix.
\end{example}
Next, we show the uniqueness of the dual core-EP inverse of a square dual matrix.
\begin{theorem}
  Consider $\widehat{A}=A+\epsilon B \in \mathbb{D}\mathbb{R}^{n \times n}$ and Ind$(A)=m$. If $\widehat{A}^{\ep}$ exists then it is  unique.
\end{theorem}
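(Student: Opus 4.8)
The plan is to exploit the characterization of Lemma~\ref{lemsyst1}: a dual matrix $\widehat{X}=X+\epsilon R$ is a DCEPGI of $\widehat{A}$ if and only if $X=A^{\ep}$ (which is already uniquely determined by the real core-EP theory of \cite{PrasadMo14}) and $R$ satisfies the linear system \eqref{syst1}. Since the standard part of any DCEPGI is forced to equal $A^{\ep}$, uniqueness reduces entirely to showing that the infinitesimal part $R$ is uniquely determined. Thus I would suppose $\widehat{X}_1=A^{\ep}+\epsilon R_1$ and $\widehat{X}_2=A^{\ep}+\epsilon R_2$ are both DCEPGIs of $\widehat{A}$, set $D:=R_1-R_2$, and subtract the two copies of \eqref{syst1}. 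Because \eqref{syst1} is affine in $R$ (the terms $BA^{\ep}$, $B(A^{\ep})^2$, $A^{\ep}AS$, $A^{\ep}BA^{m}$ are constants), the difference $D$ must satisfy the homogeneous system
\begin{equation}\label{eq:homog}
(AD)^{T}=AD,\qquad AA^{\ep}D+ADA^{\ep}=D,\qquad DA^{m+1}=O.
\end{equation}
The goal is then to prove $D=O$.

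The key steps, in order: first, multiply the second equation in \eqref{eq:homog} on the left by $A^{\ep}A$ and use the standard identities $A^{\ep}AA^{\ep}=A^{\ep}$, $(A^{\ep})^2 A = A^{\ep}$, and $AA^{\ep}A^{\ep}=A^{\ep}$ together with $A^{\ep}A\cdot AA^{\ep}=AA^{\ep}$ (all consequences of $A^{\ep}$ being an outer inverse with $A\widehat{X}^2=\widehat{X}$ and $(AX)^T=AX$) to show that $AA^{\ep}D = D$; that is, $D$ lies in the range of the orthogonal projector $AA^{\ep}$. Substituting $AA^{\ep}D=D$ back, the second equation collapses to $ADA^{\ep}=O$. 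Second, from $DA^{m+1}=O$ I would derive $DA^{\ep}=O$: since $A^{\ep}$ can be written (Lemma~\ref{lem2.8}) as $A^{D}A^{m}(A^{m})^{\dagger}$, and $A^{D}=A^{D}A^{m+1}(A^{m+1})^{\dagger}\cdots$ — more cleanly, using the core-EP decomposition \eqref{eqncEPD}--\eqref{cepdcmp}, $A^{\ep}=U\,\mathrm{diag}(T_1^{-1},O)\,U^{T}$ has its range inside $\rg(A^{m})=\rg(A^{\ep})$, and $DA^{m+1}=O$ forces $D$ to annihilate $\rg(A^{m+1})=\rg(A^{m})=\rg(A^{\ep})$, hence $DA^{\ep}=O$. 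Combined with $AA^{\ep}D=D$ from the first step and $(AD)^{T}=AD$: write $AD = AD$ is symmetric, and $AD\cdot A^{\ep} = ADA^{\ep}=O$ (second step) while also $AA^{\ep}\cdot AD = A(A^{\ep}A)D$; here I would instead use that $AD=AAA^{\ep}D\cdot(\text{nothing})$ — more directly, from $D=AA^{\ep}D$ we get $AD = AAA^{\ep}D$, but the cleanest route is: $(AD)^T=AD$ gives $D^TA^T=AD$, and since $D=AA^{\ep}D$, the column space of $D$ lies in $\rg(AA^{\ep})=\rg(A^{\ep})$; meanwhile $DA^{\ep}=O$ says the row space of $D$ is orthogonal to $\rg((A^{\ep})^{T})$. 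Translating everything into the $U$-block form $D = U\left[\begin{smallmatrix} D_1 & D_2\\ D_3 & D_4\end{smallmatrix}\right]U^{T}$, the condition $AA^{\ep}D=D$ kills the bottom block rows ($D_3=D_4=O$), $DA^{\ep}=O$ kills the left block columns that survive ($D_1=O$, since $T_1^{-1}$ is nonsingular), the symmetry of $AD$ then forces the remaining $D_2$ block to satisfy a symmetry constraint that, together with $D$ having only the $(1,2)$ block nonzero, gives $T_1 D_2$ symmetric and located in an off-diagonal block — forcing $T_1 D_2 = O$ and hence $D_2=O$. Therefore $D=O$ and $R_1=R_2$.

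I expect the main obstacle to be the bookkeeping in the third equation $DA^{m+1}=O$ versus the second equation $AA^{\ep}D+ADA^{\ep}=D$: these pull on the row space and column space of $D$ respectively, and one must be careful that the idempotent $AA^{\ep}$ is an \emph{orthogonal} projector (from $(AX)^T=AX$) while $A^{\ep}A$ need not be, so the symmetry equation $(AD)^T=AD$ has to be brought in to pin down the last off-diagonal block rather than hoping the first two equations alone suffice. Carrying the argument out in the unitary block coordinates of \eqref{eqncEPD} makes each of these three constraints transparent and reduces the whole thing to elementary block-matrix algebra with the single nonsingular block $T_1$; that is the route I would take rather than manipulating the abstract identities, since the abstract manipulation risks circularity around which outer-inverse identities are available.
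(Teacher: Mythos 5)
Your proposal is correct and follows essentially the same route as the paper: subtract the two copies of the system from Lemma~\ref{lemsyst1} to obtain a homogeneous system for $D=R_1-R_2$, use $DA^{m+1}=O$ to get $ADA^{\ep}=O$ so that the second equation collapses to $D=AA^{\ep}D$, and then annihilate the blocks of $D$ one at a time in the unitary coordinates of the core-EP decomposition \eqref{eqncEPD} (the paper kills $Z_2$ by the symmetry of $AZ$ and $Z_1$ by $ZA^{m+1}=O$, while you kill $D_1$ by $DA^{\ep}=O$ and $D_2$ by symmetry — an immaterial reordering). One caution: the identity $(A^{\ep})^2A=A^{\ep}$ you invoke is false in general (only $A(A^{\ep})^2=A^{\ep}$ holds), so the purely abstract derivation of $AA^{\ep}D=D$ from the second equation alone via left-multiplication by $A^{\ep}A$ does not go through as stated; it should be replaced by the block computation you yourself propose, which does establish it.
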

\begin{proof}
    It follows from Lemma \ref{lemsyst1} that if the dual CEP inverse of $\widehat{A}$ exists then it is of the form $A^{\ep}+\epsilon R$ with $R$ satisfying \eqref{syst1}. To show uniquness, we assume $\widehat{X_{1}}=A^{\ep}+\epsilon R_{1}$ and $\widehat{X_{2}}=A^{\ep}+\epsilon R_{2}$, two dual CEP inverses of $\widehat{A}$. Denote
    $$ Z=R_{1}-R_{2}. $$
Next, we show $Z=O$.
From the first equation of system \eqref{syst1}, it can be observed that 
\begin{equation}\label{1steqn}
    \left\{\begin{array}{l}
(AR_{1}+BA^{\ep})^{T}=AR_{1}+BA^{\ep}, \\
(AR_{2}+BA^{\ep})^{T}=AR_{2}+BA^{\ep}.
\end{array}\right.
\end{equation}

By subtracting second equation from the first equation in \eqref{1steqn}, we obtain
\begin{equation}\label{eqn4.3}
    (AZ)^{T}=AZ.
\end{equation}
From second equation of system \eqref{syst1}, we obtain
\begin{equation}\label{2ndeqn}
\left\{\begin{array}{l}
R_{1}=B(A^{\ep})^{2}+AA^{\ep}R_{1}+AR_{1}A^{\ep}, \\
R_{2}=B(A^{\ep})^{2}+AA^{\ep}R_{2}+AR_{2}A^{\ep}.
\end{array}\right.
\end{equation}  Subtracting the second equation from the first equation in \eqref{2ndeqn}, we get \begin{equation}\label{2.1eqn}
    Z=AA^{\ep}Z+AZA^{\ep}.
\end{equation}
Similarly, we obtain from third equation of system \eqref{syst1}
\begin{equation}\label{3rdeqn}
\left\{\begin{array}{l}
S=R_{1}A^{m+1}+A^{\ep}AS+A^{\ep}BA^{m}, \\
S=R_{2}A^{m+1}+A^{\ep}AS+A^{\ep}BA^{m}.
\end{array}\right.
\end{equation}  Subtracting the second equation from the first equation in \eqref{3rdeqn}, we obtain \begin{equation}\label{4.7eqn}
    O=ZA^{m+1}.
\end{equation} By pre-multiplying $A$ and post-multiplying by $(A^{\ep})^{m+1}$ both sides of \eqref{4.7eqn}, we get $$O=AZA^{m+1}(A^{\ep})^{m+1}=AZAA^{\ep},$$ and then multiplying $A^{\ep}$ on the right, we obtain $$AZA^{\ep}=O.$$
Therefore,  equation \eqref{2.1eqn} becomes $Z=AA^{\ep}Z$.
Let the decomposition of $A$ be as in equation \eqref{eqncEPD}. Let
\begin{equation}\label{Zexpes}
Z=U\left[\begin{array}{ll}
Z_1 & Z_2 \\
Z_3 & Z_4
\end{array}\right] U^{T},
\end{equation}
where $Z_1 \in \mathbb{R}^{r \times r}$. Using \eqref{eqncEPD},\eqref{cepdcmp} and \eqref{Zexpes} in \eqref{2.1eqn}, we obtain
$$ 
\begin{aligned}
AA^{\ep}Z & =U\left[\begin{array}{cc}
T_{1} & T_{2} \\
O & N
\end{array}\right] U^{T} U\left[\begin{array}{cc}
T_{1}^{-1} & O \\
O & O
\end{array}\right] U^{T} U\left[\begin{array}{ll}
Z_1 & Z_2 \\
Z_3 & Z_4
\end{array}\right] U^{T} \\
& =U\left[\begin{array}{cc}
I_r & O \\
O & O
\end{array}\right] U^{T} U\left[\begin{array}{ll}
Z_1 & Z_2 \\
Z_3 & Z_4
\end{array}\right] U^{T} \\
& =U\left[\begin{array}{cc}
Z_1 & Z_2 \\
O & O
\end{array}\right] U^{T}=U\left[\begin{array}{ll}
Z_1 & Z_2 \\
Z_3 & Z_4
\end{array}\right] U^{\mathrm{T}}=Z.
\end{aligned}
$$

Therefore, $Z_3=Z_4=O$, i.e.,
$$
Z=U\left[\begin{array}{cc}
Z_1 & Z_2 \\
O & O
\end{array}\right] U^{T} .
$$
From equation \eqref{eqn4.3}, we obtain
$$
\begin{aligned}
(AZ)^{T} & =\left(U\left[\begin{array}{cc}
T_1 & T_2 \\
O & N
\end{array}\right] U^{T} U\left[\begin{array}{cc}
Z_1 & Z_2 \\
O & O
\end{array}\right] U^{T}\right)^{T} \\
& =\left(U\left[\begin{array}{cc}
T_{1}Z_1 & T_{1}Z_2 \\
O & O
\end{array}\right] U^{T}\right)^{T} \\
& =U\left[\begin{array}{cc}
(T_{1}Z_1)^{T} & O \\
(T_{1}Z_2)^{T} & O
\end{array}\right] U^{T}=U\left[\begin{array}{cc}
T_{1}Z_1 & T_{1}Z_2 \\
O & O
\end{array}\right] U^{T}.
\end{aligned}
$$
Therefore, $T_{1}Z_2=O$ and hence we obtain $Z_2=O$.
So,$$ Z=U\left[\begin{array}{cc}
Z_{1} & O \\
O & O
\end{array}\right] U^{\mathrm{T}}.$$ Again from equation \eqref{4.7eqn}, we obtain 
$$
\begin{aligned}
O & =U\left[\begin{array}{cc}
Z_1 & O \\
O & O
\end{array}\right] U^{T} U\left[\begin{array}{cc}
T_1^{m+1} & T_{1}\Tilde{T} \\
O & O
\end{array}\right] U^{T} \\
& =U\left[\begin{array}{cc}
Z_{1}T_1^{m+1} & Z_{1}T_{1}\Tilde{T} \\
O & O
\end{array}\right] U^{T},
\end{aligned}
$$ where $\Tilde{T}=\sum_{i=0}^{m-1} T_{1}^{i}T_{2}N^{m-1-i}$. So, $Z_{1}=0$. Hence, we obtain $Z=O$ i.e., $R_{1}=R_{2}$. Therefore, we proved the existence and uniqueness of $\widehat{A}^{\ep}$.
\end{proof}
In the following theorem, we provide necessary and sufficient criteria for the existence of the dual CEPGI using core-nilpotent decomposition.

\begin{theorem}\label{equivalent}
    Let $\widehat{A}=A+\epsilon B \in \mathbb{D}\mathbb{R}^{n \times n}$ with  Ind$(A)=m$. Denote $\widehat{A}^{m}={A}^{m}+\epsilon S$, where $S:=\sum_{i=1}^{m} A^{m-i}BA^{i-1}$. Then the subsequent statements  are equivalent:
    \begin{enumerate}[\rm(i)]
        \item  $\widehat{A}^{\ep}$ exists;
        \item $\widehat{A}=U\begin{bmatrix}
T_{1} & T_2 \\O & N
\end{bmatrix}U^{T}+\epsilon U\begin{bmatrix}
B_{1} & B_{2}\\B_{3} & B_{4}
\end{bmatrix}U^{T}$ where $U$ is unitary matrix and $T_{1}$ is nonsingular matrix, $N^{m}=O$, and $\sum_{i=1}^{m} N^{m-i}B_3 T_1^{i-m-1}\Tilde{T}=\sum_{i=1}^{m} (N^{m-i}B_3F+N^{m-i}B_4 N^{i-1})$, where $\Tilde{T}= \sum_{i=0}^{m} T_{1}^{i}T_{2}N^{m-i}$ and $F= \sum_{j=0}^{i-2} T_{1}^{j}T_{2}N^{i-2-j}$.
\item $\left(I-A^{m}(A^{m})^{\ep}\right)S\left(I-(A^{m})^{\ep}A^{m}\right)=O$
\end{enumerate}

\end{theorem}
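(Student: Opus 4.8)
The plan is to reduce everything to the solvability of the linear system \eqref{syst1} supplied by Lemma~\ref{lemsyst1}: by that lemma $\widehat A^{\ep}$ exists exactly when \eqref{syst1}, viewed as a linear system in the real matrix $R$, is consistent, so the goal is to make this consistency explicit. To that end I would fix a core-EP decomposition $A=U\begin{bmatrix}T_1&T_2\\O&N\end{bmatrix}U^T$ as in \eqref{eqncEPD} and push every matrix occurring in \eqref{syst1} into the induced $2\times 2$ block form over $U$: by \eqref{cepdcmp} one has $A^{\ep}=U\begin{bmatrix}T_1^{-1}&O\\O&O\end{bmatrix}U^T$; the block powers of $A$ give $A^m=U\begin{bmatrix}T_1^m&G\\O&O\end{bmatrix}U^T$ and $A^{m+1}=U\begin{bmatrix}T_1^{m+1}&T_1G\\O&O\end{bmatrix}U^T$ with $G:=\sum_{j=0}^{m-1}T_1^{j}T_2N^{m-1-j}$ and $N^m=O$; and writing $U^TBU=\begin{bmatrix}B_1&B_2\\B_3&B_4\end{bmatrix}$ as in \eqref{Bdcomp}, $R=U\begin{bmatrix}R_1&R_2\\R_3&R_4\end{bmatrix}U^T$ and $S=U\begin{bmatrix}S_1&S_2\\S_3&S_4\end{bmatrix}U^T$, a direct expansion of $S=\sum_{i=1}^{m}A^{m-i}BA^{i-1}$ yields $S_3=\sum_{i=1}^{m}N^{m-i}B_3T_1^{i-1}$ and $S_4=\sum_{i=1}^{m}\bigl(N^{m-i}B_3F_i+N^{m-i}B_4N^{i-1}\bigr)$ with $F_i:=\sum_{j=0}^{i-2}T_1^{j}T_2N^{i-2-j}$.

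Next I would substitute these block forms into the first two equations of \eqref{syst1}. Comparing blocks, $B(A^{\ep})^2+AA^{\ep}R+ARA^{\ep}=R$ forces $R_4=O$, the Sylvester-type relation $R_3=B_3T_1^{-2}+NR_3T_1^{-1}$ — whose unique solution, since $N^m=O$, is $R_3=\sum_{k=0}^{m-1}N^{k}B_3T_1^{-(k+2)}$ — and $T_1R_1=-B_1T_1^{-1}-T_2R_3$, so $R_1$ is determined by $R_3$. The symmetry equation $(AR+BA^{\ep})^T=AR+BA^{\ep}$ then makes both diagonal blocks of $AR+BA^{\ep}$ vanish automatically (this is where $T_1R_1+T_2R_3=-B_1T_1^{-1}$ enters) and forces its two off-diagonal blocks to be mutual transposes, which determines $R_2$ uniquely. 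Hence the first two equations of \eqref{syst1} are \emph{always} solvable, with a unique solution $R$; consequently $\widehat A^{\ep}$ exists if and only if this $R$ also satisfies the third equation $RA^{m+1}+A^{\ep}AS+A^{\ep}BA^m=S$.

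Comparing the four blocks of this third equation, the $(1,1)$- and $(1,2)$-blocks, once the value of $R_1$ is inserted, collapse to $T_1^{-1}T_2\bigl(S_3-R_3T_1^{m+1}\bigr)=O$ and $T_1^{-1}T_2\bigl(S_4-R_3T_1G\bigr)=O$, so they are implied by the $(2,1)$- and $(2,2)$-blocks. The $(2,1)$-block reads $R_3T_1^{m+1}=S_3$, and inserting the explicit $R_3$ and $S_3$ both sides equal $\sum_{i=1}^{m}N^{m-i}B_3T_1^{i-1}$, so this block is an identity. Thus the third equation is equivalent to its $(2,2)$-block $R_3T_1G=S_4$; expanding $R_3$, $T_1G$ and $S_4$ term by term rewrites it as $\sum_{i=1}^{m}N^{m-i}B_3T_1^{i-m-1}G=\sum_{i=1}^{m}\bigl(N^{m-i}B_3F_i+N^{m-i}B_4N^{i-1}\bigr)$, which is the condition displayed in (ii); this gives (i)$\,\Leftrightarrow\,$(ii). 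For (i)$\,\Leftrightarrow\,$(iii): since $\mathrm{Ind}(A^m)=1$, the core-EP decomposition of $A^m$ has trivial nilpotent part, so $(A^m)^{\ep}=U\begin{bmatrix}T_1^{-m}&O\\O&O\end{bmatrix}U^T$ by \eqref{cepdcmp}; hence $I-A^m(A^m)^{\ep}=U\begin{bmatrix}O&O\\O&I\end{bmatrix}U^T$ and $I-(A^m)^{\ep}A^m=U\begin{bmatrix}O&-T_1^{-m}G\\O&I\end{bmatrix}U^T$, so that $\bigl(I-A^m(A^m)^{\ep}\bigr)S\bigl(I-(A^m)^{\ep}A^m\bigr)=U\begin{bmatrix}O&O\\O&S_4-S_3T_1^{-m}G\end{bmatrix}U^T$. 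Using $S_3=R_3T_1^{m+1}$ from above, $S_3T_1^{-m}G=R_3T_1G$, so the vanishing of this product is precisely the $(2,2)$-block condition; hence (i), (ii) and (iii) are all equivalent.

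The main obstacle is the bookkeeping in the two middle steps: checking that the first two equations of \eqref{syst1} are solvable for \emph{every} $B$ — so that only the third equation carries information about the existence of $\widehat A^{\ep}$ — and then checking that three of its four block-equations hold automatically, in particular the purely structural identity $R_3T_1^{m+1}=S_3$, i.e.\ $\sum_{k=0}^{m-1}N^{k}B_3T_1^{m-1-k}=\sum_{i=1}^{m}N^{m-i}B_3T_1^{i-1}$. One must also be careful with notation, since the matrix playing the role of $\widetilde T$ in statement (ii) is the $(1,2)$-block $T_1G$ of $A^{m+1}$, which differs by a factor $T_1$ from the $\widetilde T$ appearing in \eqref{Drazincepdcmp}; keeping everything in terms of $G$ until the final comparison avoids this trap. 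By contrast, once the three projectors are written in block form the equivalence (i)$\,\Leftrightarrow\,$(iii) is a one-line computation.
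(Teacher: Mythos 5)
Your argument is correct, and it rests on exactly the same two ingredients as the paper's proof — Lemma \ref{lemsyst1} together with the block forms \eqref{eqncEPD}, \eqref{cepdcmp}, \eqref{Bdcomp} induced by the core-EP decomposition of $A$ — but it is organized differently, and the reorganization buys something. The paper proves the cycle (i)$\Rightarrow$(ii)$\Rightarrow$(iii)$\Rightarrow$(i), and for (iii)$\Rightarrow$(i) it produces the candidate $\widehat{X}$ of \eqref{dualCEPexprsn} without explaining its origin and verifies the three defining equations by brute force. You instead show that the first two equations of \eqref{syst1} always determine $R$ uniquely ($R_4=O$; $R_3$ from the nilpotent Sylvester recursion $R_3=B_3T_1^{-2}+NR_3T_1^{-1}$; $R_1$ from $T_1R_1=-B_1T_1^{-1}-T_2R_3$; $R_2$ from the off-diagonal symmetry constraint via $NR_3+B_3T_1^{-1}=R_3T_1$), so that existence is equivalent to this forced $R$ satisfying the third equation, whose $(2,1)$ block is the identity $R_3T_1^{m+1}=S_3$ and whose $(1,1)$ and $(1,2)$ blocks collapse to $T_1^{-1}T_2(S_3-R_3T_1^{m+1})=O$ and $T_1^{-1}T_2(S_4-R_3T_1G)=O$; I checked these reductions and they hold, so everything hinges on the single $(2,2)$ block equation, which your projector computation shows is also exactly condition (iii). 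This route derives the canonical form \eqref{dualCEPexprsn} rather than guessing it, re-proves uniqueness as a by-product, and makes the three statements manifestly equivalent to one block identity. You are also right to flag the $\Tilde{T}$ issue: as printed in statement (ii), $\Tilde{T}=\sum_{i=0}^{m}T_1^{i}T_2N^{m-i}$ equals $T_1G$ with $G=\sum_{i=0}^{m-1}T_1^{i}T_2N^{m-1-i}$, which is off by one factor of $T_1$ from the quantity the paper's own proof (and \eqref{Drazincepdcmp}) actually uses; your condition $\sum_{i=1}^{m}N^{m-i}B_3T_1^{i-m-1}G=S_4$ is the one consistent with the computations on both sides of the equivalence.
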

  
\begin{proof}
(i)$\Rightarrow$(ii):  Let $A$ and $A^{\ep}$ be of the form \eqref{eqncEPD} and \eqref{cepdcmp}, respectively and Ind$(A)=m$. Let $B=U\left[\begin{array}{ll}B_1 & B_2 \\ B_3 & B_4\end{array}\right] U^{T}$ and $R=U\left[\begin{array}{ll}R_1 & R_2 \\ R_3 & R_4\end{array}\right] U^{T}$. Then

\begin{equation}\label{Sdcomp}
\begin{split}
&
S= \sum_{i=1}^m A^{m-i}BA^{i-1} \\
&  =
U\begin{bmatrix}
\sum_{i=1}^{m}(T_{1}^{m-i}B_1+EB_{3})T_{1}^{i-1} & \sum_{i=1}^{m}\left((T_{1}^{m-i}B_1+EB_3)F+(T_1^{m-i}B_2+EB_4)\right)N^{i-1} \\
\sum_{i=1}^{m} N^{m-i}B_3T_{1}^{i-1} & \sum_{i=1}^{m} (N^{m-i}B_3F+N^{m-i}B_4 N^{i-1})
\end{bmatrix} U^{T},
\end{split}
\end{equation} where $E= \sum_{j=0}^{m-i} T_{1}^{j}T_{2}N^{m-i-j}$ and $F= \sum_{j=0}^{i-2} T_{1}^{j}T_{2}N^{i-2-j}$.

Take $S_1=\sum_{i=1}^{m}(T_{1}^{m-i}B_1+EB_{3})T_{1}^{i-1},~ S_2 \sum_{i=1}^{m}[(T_{1}^{m-i}B_1+EB_3)F+(T_1^{m-i}B_2+EB_4)]N^{i-1},\\ S_3= \sum_{i=1}^{m} N^{m-i}B_3T_{1}^{i-1},~S_4=\sum_{i=1}^{m} N^{m-i}B_3F+N^{m-i}B_4 N^{i-1}$.
Since the dual CEP generalized inverse of $\widehat{A}$ exists, it follows from the third condition of equation \eqref{syst1} that
\begin{equation*}
\begin{split}
& \begin{bmatrix}
S_1 & S_2 \\
S_3 & S_4
\end{bmatrix} \\
&\qquad =\begin{bmatrix}
R_{1}T_{1}^{m+1}& R_{1}T_{1}\Tilde{T} \\
R_{3}T_{1}^{m+1} & R_{3}T_{1}\Tilde{T}
\end{bmatrix}+ \begin{bmatrix}
S_1+T_1^{-1}T_2 S_3 & S_2+T_1^{-1}T_2 S_4 \\
O & O
\end{bmatrix}+ \begin{bmatrix}
T_{1}^{-1}B_1T_{1}^{m} & T_{1}^{-1}B_1\Tilde{T}\\
O & O
\end{bmatrix} \\
&\qquad =\begin{bmatrix}
R_{1}T_{1}^{m+1}+S_1+T_1^{-1}T_2 S_3+T_{1}^{-1}B_1T_{1}^{m} & R_{1}T_{1}\Tilde{T}+S_2+T_1^{-1}T_2 S_4+T_{1}^{-1}B_1\Tilde{T} \\
R_{3}T_{1}^{m+1} & R_{3}T_{1}\Tilde{T}
\end{bmatrix}.
\end{split}
\end{equation*}
One can obtain from the above equality that $\sum_{i=1}^{m} N^{m-i}B_3 T_1^{i-m-1}\Tilde{T}=\sum_{i=1}^{m} (N^{m-i}B_3F+N^{m-i}B_4 N^{i-1})$.

(ii)$\Rightarrow$(iii): If (ii) holds, then \begin{align*}\begin{split}
    & \left(I-A^{m}(A^{m})^{\ep}\right)S\left(I-(A^{m})^{\ep}A^{m}\right)\\ & \qquad=U\begin{bmatrix}
O & O \\
O & \sum_{i=1}^{m} N^{m-i}B_3F+N^{m-i}B_4 N^{i-1}-\sum_{i=1}^{m} N^{m-i}B_3T_{1}^{i-m-1}\Tilde{T}
\end{bmatrix}U^{T}\\ & \qquad =O.
\end{split}\end{align*}

(iii)$\Rightarrow$(i): Suppose (iii) holds. Write 
\begin{align*}
\begin{split}  & \widehat{X}=U\begin{bmatrix}T_{1}^{-1} & O \\ O & O\end{bmatrix} U^{T} + \\ & \epsilon U\begin{bmatrix}
-T_{1}^{-1}B_{1}T_{1}^{-1}-T_{1}^{-1}T_{2}\sum_{i=1}^{m}N^{m-i}B_3T_{1}^{i-m-2} & T_{1}^{-1}(\sum_{i=1}^{m}N^{m-i}B_3T_{1}^{i-m-1})^{T}\\\sum_{i=1}^{m}N^{m-i}B_3T_{1}^{i-m-2} & O
\end{bmatrix}U^{T}.
   \end{split} 
   \end{align*}
Now we verify the conditions in the definition of dual CEP generalized inverse. 

\begin{equation*}
\begin{split}
& \widehat{X}\widehat{A}^{m+1} \\ & \qquad  =U\begin{bmatrix}
T_{1}^{-1} & O \\O & O
\end{bmatrix}U^{T}+ \\ & \qquad \epsilon U\begin{bmatrix}
-T_{1}^{-1}B_{1}T_{1}^{-1}-T_{1}^{-1}T_{2}\sum_{i=1}^{m}N^{m-i}B_3T_{1}^{i-m-2} & T_{1}^{-1}(\sum_{i=1}^{m}N^{m-i}B_3T_{1}^{i-m-1})^{T}\\ \sum_{i=1}^{m}N^{m-i}B_3T_{1}^{i-m-2} & O
\end{bmatrix}U^{T} \times\\
& \qquad U\begin{bmatrix}
T_{1}^{m+1} & T_1\Tilde{T} \\
O & O
\end{bmatrix}U^{T}+ \epsilon U \\ & \qquad \begin{bmatrix}
\sum_{i=1}^{m}(T_{1}^{m-i}B_1+EB_3)T_1^{i-1} & \sum_{i=1}^{m}\left((T_{1}^{m-i}B_1+EB_3)F+(T_{1}^{m-i}B_2+EB_4)N^{i-1}\right) \\
\sum_{i=1}^{m} N^{m-i}B_3T_1^{i-1} & \sum_{i=1}^{m} N^{m-i}(B_3F+B_4N^{i-1})
\end{bmatrix} U^{T} \\
& \qquad =U\begin{bmatrix}
T_{1}^{m} & \Tilde{T} \\
O & O
\end{bmatrix} U^{T}+ \epsilon U \\
& \qquad \begin{bmatrix}
\sum_{i=1}^{m}(T_{1}^{m-i}B_1+EB_{3})T_{1}^{i-1} & \sum_{i=1}^{m}\left((T_{1}^{m-i}B_1+EB_3)F+(T_1^{m-i}B_2+EB_4)\right)N^{i-1} \\
\sum_{i=1}^{m} N^{m-i}B_3T_{1}^{i-1} & \sum_{i=1}^{m} (N^{m-i}B_3F+N^{m-i}B_4 N^{i-1})
\end{bmatrix} U^{T} \\
& \qquad =\widehat{A}^{m}.
\end{split}
\end{equation*}
Since 

\begin{equation*}
\begin{split}
& \widehat{A}\widehat{X}  =\left( U\begin{bmatrix}T_1 & T_2 \\ O & N\end{bmatrix} U^{T}+\epsilon U\begin{bmatrix}B_1 & B_2 \\ B_3 & B_4\end{bmatrix} U^{T}\right) \times \\ 
& \qquad  U\begin{bmatrix}T_{1}^{-1} & O \\ O & O\end{bmatrix} U^{T} + \\ & \qquad \epsilon U\begin{bmatrix}
-T_{1}^{-1}B_{1}T_{1}^{-1}-T_{1}^{-1}T_{2}\sum_{i=1}^{m}N^{m-i}B_3T_{1}^{i-m-2} & T_{1}^{-1}(\sum_{i=1}^{m}N^{m-i}B_3T_{1}^{i-m-1})^{T}\\\sum_{i=1}^{m}N^{m-i}B_3T_{1}^{i-m-2} & O
\end{bmatrix}U^{T} \\
& \qquad =U\begin{bmatrix}
I & O \\
O & O
\end{bmatrix} U^{T}+\epsilon U\begin{bmatrix}
O & (\sum_{i=1}^{m}N^{m-i}B_3T_{1}^{i-m-1})^{T} \\
\sum_{i=1}^{m} N^{m-i}B_3 T_{1}^{m-i-1} & O
\end{bmatrix} U^{T}\\ & \qquad =(\widehat{A}\widehat{X})^{T}.
\end{split}
\end{equation*}
Again 
\begin{align*}
\begin{split}
\widehat{A}\widehat{X}^2 & =\left(U\begin{bmatrix}
I & O \\
O & O
\end{bmatrix} U^{T}+\epsilon U\begin{bmatrix}
O & (\sum_{i=1}^{m}N^{m-i}B_3T_{1}^{i-m-1})^{T} \\
\sum_{i=1}^{m} N^{m-i}B_3 T_{1}^{m-i-1} & O
\end{bmatrix} U^{T}\right) \times \\ & \qquad U\begin{bmatrix}T_{1}^{-1} & O \\ O & O\end{bmatrix} U^{T} + \\ & \qquad \epsilon U\begin{bmatrix}
-T_{1}^{-1}B_{1}T_{1}^{-1}-T_{1}^{-1}T_{2}\sum_{i=1}^{m}N^{m-i}B_3T_{1}^{i-m-2} & T_{1}^{-1}(\sum_{i=1}^{m}N^{m-i}B_3T_{1}^{i-m-1})^{T}\\\sum_{i=1}^{m}N^{m-i}B_3T_{1}^{i-m-2} & O
\end{bmatrix}U^{T}\\ & \qquad 
U\begin{bmatrix}T_{1}^{-1} & O \\ O & O\end{bmatrix} U^{T} + \\ & \qquad \epsilon U\begin{bmatrix}
-T_{1}^{-1}B_{1}T_{1}^{-1}-T_{1}^{-1}T_{2}\sum_{i=1}^{m}N^{m-i}B_3T_{1}^{i-m-2} & T_{1}^{-1}(\sum_{i=1}^{m}N^{m-i}B_3T_{1}^{i-m-1})^{T}\\\sum_{i=1}^{m}N^{m-i}B_3T_{1}^{i-m-2} & O
\end{bmatrix}U^{T} \\ & \qquad =\widehat{X}.
\end{split}
\end{align*}

Therefore, $\widehat{A}^{\ep}$ exists and $\widehat{X}=\widehat{A}^{\ep}$.

\end{proof}

\section{Dual core-EP decomposition}\label{sec4}

This section introduces and establishes the uniqueness of the dual core-EP decomposition of a real dual matrix.
\begin{theorem}\label{decompprev}
   Consider $\widehat{A}=A+\epsilon B \in \mathbb{D}\mathbb{R}^{n \times n}$ with $A\text{Ind}(\widehat{A})=m$ and rank$(A^{m})=t$. There exists a unitary dual matrix $\widehat{U}=U+\epsilon U_{0} \in \mathbb{D}\mathbb{R}^{n \times n}$ such that 
    \begin{equation}\label{Ahatexprsn}
        \widehat{A}=\widehat{U}\begin{bmatrix}
\widehat{T_{1}} & \widehat{T_{2}} \\O & \widehat{N}
\end{bmatrix}\widehat{U}^{T}
    \end{equation}
\begin{equation}\label{4condneqn}\left\{\begin{array}{l}
\widehat{T_1}=T_{1}+\epsilon B_{1} \\
\widehat{T_2}=T_{2}+\epsilon B_{2} \\
\widehat{N}=N+\epsilon B_{4} \\ 
U_0=U\left[\begin{array}{cc}
O & U_2 \\
U_3 & O
\end{array}\right]
\end{array}\right.\end{equation}
$U \in \mathbb{R}^{n \times n},~T_1 \in \mathbb{R}^{t \times t}$ and $N \in \mathbb{R}^{(n-t) \times(n-t)}$ are as in Lemma \ref{lemcepdecomp}, $B_1, B_2, B_3$ and $B_4$ are as given in \eqref{Bdcomp}, and
\begin{equation}\label{condition}
\left\{\begin{array}{l}
T_1U_2=U_2N,\\
U_3=B_3T_1^{-1}+NB_3T_1^{-2}+N^2B_3T_1^{-3}+\cdots+T_1^{-m}B_2N^{m-1}, \\
T_2U_{3}=O,~U_{3}T_2=O.
\end{array}\right.
\end{equation}
\end{theorem}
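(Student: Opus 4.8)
The plan is to construct the dual unitary matrix $\widehat U$ explicitly and then verify that $\widehat U$ is unitary and that conjugating $\widehat A$ by it produces the required block upper-triangular form. The natural ansatz is to start from the core-EP decomposition of the standard part $A = U\begin{bmatrix} T_1 & T_2 \\ O & N\end{bmatrix} U^T$ from Lemma \ref{lemcepdecomp} and to look for the infinitesimal part of $\widehat U$ in the form $U_0 = U\begin{bmatrix} O & U_2 \\ U_3 & O\end{bmatrix}$ with the off-diagonal blocks $U_2, U_3$ to be determined. First I would compute $\widehat U^T \widehat A \widehat U$ with $\widehat U = U + \epsilon U_0$ and $\widehat A = A + \epsilon B$, using $U^TBU = \begin{bmatrix} B_1 & B_2 \\ B_3 & B_4\end{bmatrix}$ from \eqref{Bdcomp}; the $(2,1)$ block of the dual part must vanish, and this is precisely the equation that forces the displayed formula for $U_3$ in \eqref{condition} (it is a discrete Sylvester-type recursion in $N$ and $T_1^{-1}$, solvable because $N$ is nilpotent). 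The standard part is $U^T A U$, which is already upper triangular, so it contributes nothing new.

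Next I would impose that $\widehat U$ is unitary, i.e. $\widehat U^T \widehat U = I$. Since $U^TU = I$, expanding gives the first-order condition $U_0^T U + U^T U_0 = O$, i.e. $U^T U_0$ is skew-symmetric. With the off-diagonal ansatz for $U_0$ this becomes a relation tying $U_2$ to $U_3$; combined with the vanishing-block condition above and the compatibility requirement $T_1 U_2 = U_2 N$ (which itself has a solution because $T_1$ is nonsingular and $N$ is nilpotent, so $\sigma(T_1) \cap \sigma(N) = \varnothing$ — alternatively one can take $U_2 = O$, but the statement keeps it general), one recovers the three lines of \eqref{condition}. The conditions $T_2 U_3 = O$ and $U_3 T_2 = O$ enter when one checks that the remaining blocks of $\widehat U^T \widehat A \widehat U$ reduce exactly to $\widehat T_1 = T_1 + \epsilon B_1$, $\widehat T_2 = T_2 + \epsilon B_2$, $\widehat N = N + \epsilon B_4$ with no spurious cross terms; these are the consistency constraints that make the decomposition well-defined, and I would verify them block-by-block.

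Finally I would assemble the pieces: read off $\widehat T_1$, $\widehat T_2$, $\widehat N$ from the computed blocks, confirm they match \eqref{4condneqn}, and note that $\widehat N = N + \epsilon B_4$ inherits $N^m = O$ at the standard level (the dual nilpotency, if needed, being addressed separately). For uniqueness I would argue as in Lemma \ref{lemcepdecomp}: the standard part of the decomposition is the unique core-EP decomposition of $A$, and once $U$ is fixed the infinitesimal data is pinned down by the linear system \eqref{condition}, whose solution for $U_3$ is unique (the recursion terminates by nilpotency of $N$) and for $U_2$ is determined by the Sylvester equation $T_1 U_2 = U_2 N$ together with the skew-symmetry constraint.

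The main obstacle I expect is bookkeeping in the $(2,1)$-block equation: showing that the finite sum $U_3 = B_3 T_1^{-1} + N B_3 T_1^{-2} + \cdots + T_1^{-m} B_2 N^{m-1}$ genuinely solves $N U_3 - U_3 T_1 = -(\text{something involving } B_3, B_2)$ requires carefully tracking how powers of $N$ annihilate tail terms, and one must confirm the index-$m$ truncation is exactly right. The rest is linear algebra that goes through cleanly once the unitarity constraint and the nilpotency of $N$ are used.
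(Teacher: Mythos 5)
Your plan follows the paper's proof essentially step for step: the same ansatz $U_0=U\left[\begin{smallmatrix}O & U_2\\ U_3 & O\end{smallmatrix}\right]$, the same block computation of $\widehat{U}^{T}\widehat{A}\widehat{U}$ whose dual part is $\left[\begin{smallmatrix}T_2U_3+B_1 & T_1U_2+B_2-U_2N\\ NU_3+B_3-U_3T_1 & B_4-U_3T_2\end{smallmatrix}\right]$, and the same identification of the $(2,1)$-block equation $NU_3-U_3T_1=-B_3$ with the displayed formula for $U_3$. The bookkeeping you worry about is fine: with $U_3=\sum_{i=0}^{m-1}N^{i}B_3T_1^{-(i+1)}$ the expression $NU_3-U_3T_1$ telescopes to $N^{m}B_3T_1^{-m}-B_3=-B_3$ by nilpotency, so the $(2,1)$ block vanishes exactly (the last summand printed in \eqref{condition} is a typo for $N^{m-1}B_3T_1^{-m}$; as printed it is not even conformable).

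The step that would actually fail is the unitarity argument, which you take more seriously than the paper does (the paper simply asserts $\widehat{U}^{T}=U^{T}-\epsilon U^{T}U_0U^{T}$, which already presupposes that $U^{T}U_0$ is skew-symmetric). As you say, $\widehat{U}^{T}\widehat{U}=I$ forces $U^{T}U_0$ skew, i.e.\ $U_2=-U_3^{T}$. But you have the Sylvester fact backwards: $\sigma(T_1)\cap\sigma(N)=\varnothing$ means the homogeneous equation $T_1U_2=U_2N$ has \emph{only} the trivial solution $U_2=O$ (indeed $T_1^{m}U_2=U_2N^{m}=O$), not that a useful nonzero one exists. Combining $U_2=O$ with $U_2=-U_3^{T}$ forces $U_3=O$, which contradicts the nonzero formula for $U_3$ whenever $B_3\neq O$. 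So the claim that one ``recovers the three lines of \eqref{condition}'' from unitarity does not go through: the conditions \eqref{condition}, skew-symmetry of $U^{T}U_0$, and a nonzero $U_3$ cannot hold simultaneously. To complete your plan you must either treat the factorization as a similarity, replacing $\widehat{U}^{T}$ throughout by $\widehat{U}^{-1}=U^{T}-\epsilon U^{T}U_0U^{T}$ (which is what the paper's computation actually verifies), or impose $U_3=-U_2^{T}$ as an extra hypothesis to obtain a genuinely unitary $\widehat{U}$. Finally, the theorem claims only existence, so your closing uniqueness argument, while harmless, is not required here.
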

\begin{proof} Let $\widehat{A}=A+\epsilon B \in \mathbb{D}\mathbb{R}^{n \times n}$ with rank$\left(A^m\right)=t$, $A\text{Ind}(\widehat{A})=m$,  $A$ as expressed in \eqref{eqncEPD}, and $U^{T}BU$ be partitioned as \eqref{Bdcomp}. Then
$$
\widehat{A}=U\left[\begin{array}{cc}
T_1 & T_2 \\
O & N
\end{array}\right] U^{T}+\epsilon U\left[\begin{array}{ll}
B_1 & B_2 \\
B_3 & B_4
\end{array}\right] U^{T} .
$$
Moreover, suppose $\widehat{U}=U+\epsilon U_0$, and $\widehat{T_1},~\widehat{T_2},~\widehat{N}$ and $U_0$ be as given in \eqref{4condneqn}. It is easy to verify that

\begin{align}\label{eqn3prodct}
\notag \widehat{U}^{T} & =U^{T}-\epsilon U^{T}U_0U^{T}, \\
\notag \widehat{U}^{T} \widehat{A} \widehat{U} & =\left({U}^{T}-\epsilon {U}^{T}U_0{U}^{T}\right)\left(A+\epsilon B\right)\left(U+\epsilon U_0\right) \\
& ={U}^{T}AU+\epsilon\left({U}^{T}AU_0+{U}^{T}BU-{U}^{T}U_0 {U}^{T}AU\right),
\end{align}

and

\begin{align}
\notag & {U}^{T}AU_0+{U}^{T}BU-{U}^{T}U_0{U}^{T}AU \\
\notag = & {\left[\begin{array}{ll}
T_1 & T_2 \\
O & N
\end{array}\right]\left[\begin{array}{cc}
O &  U_2 \\
U_3 & O
\end{array}\right]+\left[\begin{array}{ll}
B_1 & B_2 \\
B_3 & B_4
\end{array}\right]-\left[\begin{array}{cc}
O &  U_2 \\
U_3 & O
\end{array}\right]\left[\begin{array}{cc}
T_1 & T_2 \\
O & N
\end{array}\right] } \\
= & \left[\begin{array}{cc}
T_{2}U_3+B_1 & T_1U_2+B_{2}-U_2N \\
NU_3+B_3-U_3T_1 & B_{4}-U_3T_2
\end{array}\right].
\end{align}
Now the equation \eqref{eqn3prodct} becomes 
\begin{equation}\label{finaleqn}
\widehat{U}^{T} \widehat{A} \widehat{U}=\left[\begin{array}{cc}
T_1 & T_2 \\
O & N
\end{array}\right]+\epsilon\left[\begin{array}{cc}
T_{2}U_3+B_1 & T_1U_2+B_{2}-U_2N \\
NU_3+B_3-U_3T_1 & B_{4}-U_3T_2
\end{array}\right].
\end{equation}

Suppose that $T_{2}U_3=0,~U_3T_2=O,~T_1U_2=U_2N$ and $U_3$ are of the form \eqref{condition}. Since $A\text{Ind}(\widehat{A})=m$, then
$$
NU_3+B_3-U_3T_1=O.
$$
Therefore, we can write \eqref{finaleqn} as
$$\begin{aligned}
\widehat{U}^{T} \widehat{A} \widehat{U} &=  \begin{bmatrix}
T_1 & T_2 \\
O & N
\end{bmatrix}+\epsilon\begin{bmatrix}
B_1 & B_{2} \\
O & B_{4}
\end{bmatrix}\\ & =\begin{bmatrix}
    \widehat{T_{1}} & \widehat{T_{2}} \\O & \widehat{N}
\end{bmatrix},
\end{aligned}$$
which proves the result.
\end{proof}
\begin{example}\rm
   Let 
$\widehat{A}=A+\epsilon B:=\begin{bmatrix}
1 & 0 & 0 & 0 \\
0 & 0 & -1 & 3 \\
0 & 0 & 0 & -2 \\
0 & 0 & 0 & 0
\end{bmatrix}+\epsilon\begin{bmatrix}
1 & 0 & -1 & 1 \\
0 & 1 & -1 & 0 \\
0 & 3 & 0 & -2 \\
0 & 2 & 0 & -1
\end{bmatrix}$,
where $\text{Ind}(A)=3$.  There exists a unitary matrix $U=\begin{bmatrix}
1 & 0 & 0 & 0 \\
0 & -1 & 0 & 0 \\
0 & 0 & 0 & -1 \\ 
0 & 0 & 1 & 0
\end{bmatrix}$ such that
$$
U^{T}AU=\begin{bmatrix}
1 & 0 & 0 & 0 \\
0 & 0 & -3 & -1 \\
0 & 0 & 0 & 0 \\
0 & 0 & 2 & 0
\end{bmatrix}, \quad U^{T}BU=\begin{bmatrix}
1 & 0 & 1 & 1 \\
0 & 1 & 0 & -1 \\
0 & -2 & -1 & 0 \\
0 & 3 & 2 & 0
\end{bmatrix}.
$$

According to Theorem \ref{decompprev}, we obtain
$$
\widehat{T_{1}}=1+\epsilon, \quad \widehat{T_{2}}=\begin{bmatrix}
0 & \epsilon & \epsilon
\end{bmatrix}, \widehat{N}=\begin{bmatrix}
0 & -3 & -1 \\
0 & 0 & 0 \\
0 & 2 & 0
\end{bmatrix}+\epsilon \begin{bmatrix}
1 & 0 & -1 \\
-2 & -1 & 0 \\
3 & 2 & 0
\end{bmatrix}, \quad U_3=\begin{bmatrix}
0 \\
0 \\
0
\end{bmatrix}.
$$

Thus the decomposition of dual matrix $\widehat{A}$ is
$$
\widehat{A}=\widehat{U}\begin{bmatrix}
1+\epsilon & 0 & \epsilon & \epsilon \\
0 & \epsilon & -3 & -1-\epsilon \\
0 & -2\epsilon &  -\epsilon & 0 \\
0 & 3\epsilon & 2+2\epsilon & 0
\end{bmatrix} \widehat{U}^{T},
$$
where the dual matrix $\widehat{U}=\begin{bmatrix}1 & 0 & 0 & 0 \\ 0 & -1 & 0 & 0 \\ 0 & 0 & 0 & -1 \\ 0 & 0 & 1 & 0\end{bmatrix}+\epsilon \begin{bmatrix}0 & 0 & 0 & 0 \\ 0 & 0 & 0 & 0 \\ 0 & 0 & 0 & 0 \\ 0 & 0 & 0 & 0\end{bmatrix}$ is unitary.
\end{example}

\begin{theorem}\label{DCEpdecomp}(Dual core-EP decomposition)
    Let $\widehat{A}=A+\epsilon B \in \mathbb{D}\mathbb{R}^{n \times n}$ with Ind$(\widehat{A})=m$. Then $\widehat{A}$ can be expressed as  $\widehat{A}=\widehat{T}+\widehat{N}$, where
    \begin{enumerate}[\rm(i)]
        \item Ind$(\widehat{T})=1$,
        \item $\widehat{N}^{m}=O$,
        \item $\widehat{T}^{T}\widehat{N}=\widehat{N}\widehat{T}=O$.
    \end{enumerate}
Here $\widehat{T}$ is core partial and $\widehat{N}$ is nilpotent partial. Moreover, the decomposition is unique. 
\end{theorem}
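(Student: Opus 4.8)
\emph{Set-up and existence.} The plan is to read the decomposition directly off the block form of Theorem~\ref{decompprev}. That theorem produces a unitary dual matrix $\widehat{U}=U+\epsilon U_0$ with
\[
\widehat{U}^{T}\widehat{A}\widehat{U}=\begin{bmatrix}\widehat{T_1}&\widehat{T_2}\\O&N+\epsilon B_4\end{bmatrix},
\]
where $\widehat{T_1}=T_1+\epsilon B_1$ is invertible (its standard part $T_1$ is nonsingular, so $\widehat{T_1}^{-1}=T_1^{-1}-\epsilon T_1^{-1}B_1T_1^{-1}$), $\widehat{T_2}=T_2+\epsilon B_2$, and $N^m=O$. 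I would then set
\[
\widehat{T}:=\widehat{U}\begin{bmatrix}\widehat{T_1}&\widehat{T_2}\\O&O\end{bmatrix}\widehat{U}^{T},\qquad \widehat{N}:=\widehat{U}\begin{bmatrix}O&O\\O&N+\epsilon B_4\end{bmatrix}\widehat{U}^{T},
\]
so that $\widehat{A}=\widehat{T}+\widehat{N}$ holds at once. Property (iii) is a one-line block computation using $\widehat{U}^{T}\widehat{U}=I$: the off-diagonal placement of the two blocks forces $\widehat{T}^{T}\widehat{N}=O=\widehat{N}\widehat{T}$. For (i) I would exhibit the dual group inverse $\widehat{T}^{\#}=\widehat{U}\left[\begin{smallmatrix}\widehat{T_1}^{-1}&\widehat{T_1}^{-2}\widehat{T_2}\\O&O\end{smallmatrix}\right]\widehat{U}^{T}$, which is well defined since $\widehat{T_1}$ is invertible, and check $\widehat{T}\widehat{T}^{\#}\widehat{T}=\widehat{T}$, $\widehat{T}^{\#}\widehat{T}\widehat{T}^{\#}=\widehat{T}^{\#}$ and $\widehat{T}\widehat{T}^{\#}=\widehat{T}^{\#}\widehat{T}$ by block multiplication; hence $\mathrm{Ind}(\widehat{T})=1$ and $\widehat{T}$ is core partial.

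\emph{Property (ii) — the crux.} Since $N^m=O$, the expansion of the lower-right block collapses to $(N+\epsilon B_4)^m=\epsilon\sum_{i=1}^{m}N^{m-i}B_4N^{i-1}$, so $\widehat{N}^m=\widehat{U}\left[\begin{smallmatrix}O&O\\O&\epsilon\sum_{i=1}^{m}N^{m-i}B_4N^{i-1}\end{smallmatrix}\right]\widehat{U}^{T}$, and (ii) is equivalent to the single real identity $\sum_{i=1}^{m}N^{m-i}B_4N^{i-1}=O$. This is exactly where the hypothesis $\mathrm{Ind}(\widehat{A})=m$ enters: I would deduce it from the $(2,2)$ block of $\widehat{A}^m$, computed as in \eqref{Sdcomp} in the proof of Theorem~\ref{equivalent}, together with $N^m=O$. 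Establishing this vanishing is the main obstacle of the theorem; granting it, $\widehat{N}^m=O$, $\widehat{N}$ is nilpotent partial, and existence is finished.

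\emph{Uniqueness.} Assume $\widehat{A}=\widehat{T}+\widehat{N}=\widehat{T}'+\widehat{N}'$ with both triples satisfying (i)--(iii). I would first prove $\rg(\widehat{T})=\rg(\widehat{A}^m)=\rg(\widehat{T}')$. Because $\widehat{N}\widehat{T}=O$ and $\widehat{N}^m=O$, every non-vanishing length-$m$ word in $\widehat{T},\widehat{N}$ is of the form $\widehat{T}^{\,m-j}\widehat{N}^{j}$ with $0\le j\le m-1$, hence $\widehat{A}^m=\sum_{j=0}^{m-1}\widehat{T}^{\,m-j}\widehat{N}^{j}=\widehat{T}\bigl(\sum_{j=0}^{m-1}\widehat{T}^{\,m-1-j}\widehat{N}^{j}\bigr)$, giving $\rg(\widehat{A}^m)\subseteq\rg(\widehat{T})$; conversely, $\rg(\widehat{T}^{\#})\subseteq\rg(\widehat{T})\subseteq\nl(\widehat{N})$ forces $\widehat{N}^{j}\widehat{T}^{\#}=O$ for $j\ge1$, so $\widehat{A}^m\widehat{T}^{\#}=\widehat{T}^{\,m}\widehat{T}^{\#}=\widehat{T}^{\,m-1}$ and therefore $\rg(\widehat{T})=\rg(\widehat{T}^{\,m-1})\subseteq\rg(\widehat{A}^m)$; the same applies to $\widehat{T}'$. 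Also, $\widehat{T}^{T}\widehat{N}=O$ and $(\widehat{T}')^{T}\widehat{N}'=O$ put the columns of $\widehat{N}$ and $\widehat{N}'$ into $\nl\bigl((\widehat{A}^m)^{T}\bigr)$. Consequently $Z:=\widehat{T}-\widehat{T}'=\widehat{N}'-\widehat{N}$ has its columns simultaneously in $\rg(\widehat{A}^m)$ and in the orthogonal complement of $\rg(\widehat{A}^m)$. Over $\mathbb{R}$ this alone forces $Z=O$; over $\mathbb{D}\mathbb{R}$, however, a submodule may meet its orthogonal complement nontrivially because of $\epsilon$, so I would finish by applying the dual Hermitian idempotent $P=\widehat{A}^m(\widehat{A}^m)^{\dagger}$ — which exists since the index hypothesis guarantees $(\widehat{A}^m)^{\dagger}$ exists (cf. Lemma~\ref{lem2.3}) — getting $PZ=Z$ from the first membership and $PZ=O$ (using $P^{T}=P$) from the second, so $Z=O$; alternatively one may expand $Z=\widehat{U}\left[\begin{smallmatrix}Z_1&Z_2\\Z_3&Z_4\end{smallmatrix}\right]\widehat{U}^{T}$ and clear the blocks $Z_3,Z_4$, then $Z_2$, then $Z_1$ successively, exactly as in the uniqueness proof of $\widehat{A}^{\ep}$. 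In either case $\widehat{T}=\widehat{T}'$ and $\widehat{N}=\widehat{N}'$.
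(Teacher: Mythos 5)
Your proposal is correct in outline and, for the existence half, takes the same route as the paper: both of you define $\widehat{T}$ and $\widehat{N}$ from the block form supplied by Theorem \ref{decompprev}. You are in fact more careful there than the paper is --- you verify (i) with an explicit block group inverse and (iii) by block multiplication, whereas the paper simply asserts all three properties --- and the single step you leave as ``granted'', namely $\widehat{N}^{m}=O$, equivalently $\sum_{i=1}^{m}N^{m-i}B_{4}N^{i-1}=O$, is precisely the step the paper also states without justification; you have correctly isolated it as the only place where the hypothesis $\mathrm{Ind}(\widehat{A})=m$ must actually be cashed in (it is the block form of the existence criterion for $\widehat{A}^{D}$ and for $(\widehat{A}^{m})^{\dagger}$). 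The genuine divergence is in uniqueness. The paper partitions the infinitesimal parts of the two candidate decompositions into blocks, extracts linear relations from $\widehat{T}^{T}\widehat{N}=\widehat{N}\widehat{T}=O$, and eliminates the block differences by repeatedly post-multiplying by decreasing powers of $N$. You instead prove $\rg(\widehat{T})=\rg(\widehat{A}^{m})$ and $(\widehat{A}^{m})^{T}\widehat{N}=O$ directly from (i)--(iii) and conclude $Z=O$ from $PZ=Z$ together with $PZ=P^{T}Z=O$, where $P=\widehat{A}^{m}(\widehat{A}^{m})^{\dagger}$. This is shorter and coordinate-free, and it correctly anticipates that over $\mathbb{D}\mathbb{R}$ one cannot simply argue that a range meets its orthogonal complement trivially. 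Its price is two existence assumptions the paper's coordinate argument does not invoke explicitly: that $(\widehat{A}^{m})^{\dagger}$ exists (again the same dual-index condition as above), and that the arbitrary competitor $\widehat{T}'$ admits a dual group inverse --- if condition (i) is read only as a statement about the standard part of $\widehat{T}'$, that point needs an extra sentence of justification, or one falls back on your stated alternative, which is exactly the paper's block-elimination argument.
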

\begin{proof}
     Let rank$\left(A^r\right)=t$, Ind$(\widehat{A})=m$, and $A\text{Ind}(\widehat{A})=r$.
Using Theorem \ref{decompprev}, for a unitary matrix $\widehat{U}=U+\epsilon U_0 \in \mathbb{D}\mathbb{R}^{n \times n}$ such that $\widehat{A}$ is of the form \eqref{Ahatexprsn}. Further, we can write
$$
\widehat{T}=\widehat{U}\left[\begin{array}{ll}
\widehat{T_{1}} & \widehat{T_{2}} \\
O & O
\end{array}\right] \widehat{U}^{T} \text { and } \widehat{N}=\widehat{U}\left[\begin{array}{ll}
O & O \\
O & \widehat{N}
\end{array}\right] \widehat{U}^{T}.
$$
Thus $\widehat{A}=\widehat{T}+\widehat{N}$, Ind $(\widehat{T})=1, \widehat{N}^m=O$ and $\widehat{T}^{T}\widehat{N}=\widehat{N}\widehat{T}=O$.
Next, we show that the decomposition is unique.
Let $\widehat{A}=\widehat{T}+\widehat{N}$ be a dual core-EP decomposition, then $\widehat{T}$ and $\widehat{N}$ satisfy conditions (i), (ii) and (iii). Write $\widehat{T}=T+\epsilon T_0$ and $\widehat{N}=N+\epsilon N_0$, where $T, T_0, N$ and $N_0 \in \mathbb{R}^{n \times n}$. From three conditions (i), (ii) and (iii), it is clear that $T$ is the core part and $N$ the nilpotent part of $A$, respectively. Suppose that $A, T$ and $N$ be of the form \eqref{eqncEPD} and $U^{T}BU$ be partitioned as \eqref{Bdcomp}. Since Ind$(\widehat{T})=1$, then $U^{T}T_0U$ can be partitioned as
\begin{equation}\label{eqn4.7}
    U^{T}T_0U=\left[\begin{array}{cc}
T_{11} & T_{12} \\
T_{13} & O
\end{array}\right],
\end{equation}
with $T_{11} \in \mathbb{R}^{t \times t}$. Partition $U^{T}N_0U$ as
\begin{equation}\label{eqn4.8}
U^{T}N_0U=\left[\begin{array}{ll}
N_{11} & N_{12} \\
N_{13} & N_{14}
\end{array}\right],
\end{equation}
with $N_{11} \in \mathbb{R}^{t \times t}$. Since $T_0+N_0=B$, then
\begin{equation}\label{eqn4.9}
    T_{12}+N_{12}=B_2,
\end{equation}

\begin{equation}\label{eqn4.10}
   N_{14}=B_4. 
\end{equation}
As $T_1$ is invertible, using \eqref{eqncEPD}, \eqref{eqn4.7}, \eqref{eqn4.8}, $\widehat{T}^{T} \widehat{N}=O$ and
$$
\begin{aligned}
\widehat{T}^{T} \widehat{N} & =U\left(\left[\begin{array}{cc}
T_1^{T} & O \\
T_2^{T} & O
\end{array}\right]+\epsilon\left[\begin{array}{cc}
T_{11}^{T} & T_{13}^{T} \\
T_{12}^{T} & O
\end{array}\right]\right) U^{T} U\left(\left[\begin{array}{cc}
O & O \\
O & N
\end{array}\right]+\epsilon\left[\begin{array}{ll}
N_{11} & N_{12} \\
N_{13} & N_{14}
\end{array}\right]\right) U^{T} \\
& =\epsilon U\left[\begin{array}{cc}
T_1^{T} N_{11} & T_1^{T}N_{12}+T_{13}^{T}N \\
T_2^{T} N_{11} & T_2^{T} N_{12}
\end{array}\right] U^{T}
\end{aligned},
$$
we obtain
\begin{equation}\label{eqn4.11}
N_{11}=O,~T_2^{T} N_{12}=O,~T_1^{T}N_{12}+T_{13}^{T}N=O.
\end{equation} 
If $\widehat{A}=\widehat{T}^{\prime}+\widehat{N}^{\prime}$ is another  dual core-EP decomposition, then by the uniqueness of the core-EP decomposition $A=T+N$ of $A$ is unique, we have  $\widehat{T}^{\prime}=T+\epsilon T_0^{\prime}$ and $\widehat{N}^{\prime}=N+\epsilon N_0^{\prime}$, with $T_0^{\prime}$ and $N_0^{\prime} \in \mathbb{R}^{n \times n}$. If we partition $U^{T} T_0^{\prime} U$ and $U^{T} N_0^{\prime} U$ as
$$
U^{T} T_0^{\prime} U=\left[\begin{array}{cc}
T_{11}^{\prime} & T_{12}^{\prime} \\
T_{13}^{\prime} & O
\end{array}\right], U^{T} N_0^{\prime} U=\left[\begin{array}{cc}
N_{11}^{\prime} & N_{12}^{\prime} \\
N_{13}^{\prime} & N_{14}^{\prime}
\end{array}\right],
$$
with $T_{11}^{\prime}$ and $N_{11}^{\prime} \in \mathbb{R}^{r \times r}$, then
 \begin{equation}\label{eqn4.12}
T_{12}^{\prime}+N_{12}^{\prime}=B_2,~N_{14}^{\prime}=B_4,
\end{equation} and
\begin{equation}\label{eqn4.13}
N_{11}^{\prime}=O,~T_2^{T} N_{12}^{\prime}=O,~T_1^{T}N_{12}^{\prime}+(T_{13}^{\prime})^{T}N=O.
\end{equation}
By using \eqref{eqn4.9}, \eqref{eqn4.10}, \eqref{eqn4.11}, \eqref{eqn4.12} and \eqref{eqn4.13}, we obtain
\begin{align}\label{align4.14}
\left\{\begin{array}{l}
\left(T_{12}-T_{12}^{\prime}\right)+\left(N_{12}-N_{12}^{\prime}\right)=O, \\
T_1^{T}\left(N_{12}-N_{12}^{\prime}\right)+\left((T_{13})^{T}-(T_{13}^{\prime})^{T}\right) N=O .
\end{array}\right.
\end{align}
Since $A\text{Ind}(\widehat{A})=r$, we get $N^r=O$. Post-multiplying  both sides $N^{r-1}$ of the second condition of \eqref{align4.14}, we obtain
$$
T_1^{T}\left(N_{12}-N_{12}^{\prime}\right) N^{r-1}+\left((T_{13})^{T}-(T_{13}^{\prime})^{T}\right) N^r=O.
$$

Thus $T_1^{T}\left(N_{12}-N_{12}^{\prime}\right) N^{r-1}=O$. Since $T_1$ is invertible, we have $\left(N_{12}-N_{12}^{\prime}\right) N^{r-1}=O$. Therefore, by Post-multiplying $N^{r-1}$ both sides in the second condition of \eqref{align4.14}, we obtain $\left(T_{12}-T_{12}^{\prime}\right) N^{r-1}=O$. Next, from the second condition of \eqref{align4.14}, we obtain
$$
T_1^{T}\left(N_{12}-N_{12}^{\prime}\right) N^{r-2}+\left((T_{13})^{T}-(T_{13}^{\prime})^{T}\right) N^{r-1}=O.
$$
Thus $\left(N_{12}-N_{12}^{\prime}\right) N^{r-2}=O$ and $\left((T_{13})^{T}-(T_{13}^{\prime})^{T}\right) N^{r-2}=O$. In similar manner, we can obtain $\left((T_{13})^{T}-(T_{13}^{\prime})^{T}\right)N=O$. Now it is from clear from second condition of \eqref{align4.14} that \begin{align}\label{eqn4.15}
    N_{12}=N_{12}^{\prime}.
\end{align}
Similarly, using condition $\widehat{N} \widehat{T}=O$, we obtain 
\begin{equation}\label{eqn4.16}
N_{13}=N_{13}^{\prime}.
\end{equation}
Now taking all equations together \eqref{eqn4.10}, \eqref{eqn4.11}, \eqref{eqn4.12}, \eqref{eqn4.15}, and \eqref{eqn4.16}, one can obtain $\widehat{N}=\widehat{N}^{\prime}$ which proves the uniqueness of dual core-EP decomposition.
\end{proof}
We can obtain the following theorem by analyzing the proof of Theorem \ref{DCEpdecomp}.

\begin{theorem} 
Consider $\widehat{A} \in \mathbb{D R}^{n \times n}$ and the dual core-EP decomposition of $\widehat{A}$ as given in the theorem \ref{DCEpdecomp}. Then there exists a unitary dual matrix $\widehat{U} \in \mathbb{D R}^{n \times n}$ such that
\begin{equation}\label{Dualcepdcmp}
\widehat{A}=\widehat{U}\left[\begin{array}{ll}
\widehat{T_1} & \widehat{T_2} \\
O & \widehat{N}
\end{array}\right] \widehat{U}^{T},
\end{equation}
where $\widehat{N} \in \mathbb{D R}^{(n-t) \times(n-t)}$ is dual nilpotent and $\widehat{T_1} \in \mathbb{D R}^{t \times t}$ is invertible. 
\end{theorem}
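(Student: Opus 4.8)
The plan is to obtain this statement as a short repackaging of Theorem~\ref{DCEpdecomp} together with the explicit block form produced in Theorem~\ref{decompprev}. First I would invoke Theorem~\ref{DCEpdecomp} to write $\widehat{A}=\widehat{T}+\widehat{N}$ with $\mathrm{Ind}(\widehat{T})=1$, $\widehat{N}^{m}=O$ and $\widehat{T}^{T}\widehat{N}=\widehat{N}\widehat{T}=O$, and then recall from the proof of that theorem that, via Theorem~\ref{decompprev}, there is a unitary dual matrix $\widehat{U}=U+\epsilon U_{0}$ with
\[
\widehat{T}=\widehat{U}\begin{bmatrix}\widehat{T_1}&\widehat{T_2}\\ O&O\end{bmatrix}\widehat{U}^{T},\qquad \widehat{N}=\widehat{U}\begin{bmatrix}O&O\\ O&\widehat{N}\end{bmatrix}\widehat{U}^{T},
\]
where on the right $\widehat{N}$ now denotes the $(n-t)\times(n-t)$ block $N+\epsilon B_4$, with $t=\mathrm{rank}(A^{r})$ and $r=A\mathrm{Ind}(\widehat{A})$. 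Adding the two displays yields exactly \eqref{Dualcepdcmp}; in particular the vanishing of the lower-left block is nothing but the condition $NU_3+B_3-U_3T_1=O$ already verified in Theorem~\ref{decompprev}. So the only remaining work is to check that the $(1,1)$ block $\widehat{T_1}$ is invertible over $\mathbb{D}\mathbb{R}^{t\times t}$ and that the $(2,2)$ block $\widehat{N}$ is dual nilpotent.

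For invertibility, I would use that $T_1$ is nonsingular by Lemma~\ref{lemcepdecomp}, so that the dual matrix $\widehat{T_1}=T_1+\epsilon B_1$ admits the two-sided inverse $T_1^{-1}-\epsilon T_1^{-1}B_1T_1^{-1}$, which one confirms by a one-line multiplication using $\epsilon^2=0$. For dual nilpotency of the block $\widehat{N}$, I would exploit the block-diagonal form $\widehat{N}_{\mathrm{full}}=\widehat{U}\,\mathrm{diag}(O,\widehat{N})\,\widehat{U}^{T}$ coming from Theorem~\ref{DCEpdecomp}: since $\widehat{U}$ is unitary, $\widehat{N}_{\mathrm{full}}^{\,k}=\widehat{U}\,\mathrm{diag}(O,\widehat{N}^{\,k})\,\widehat{U}^{T}$ for every $k$, so the hypothesis $\widehat{N}_{\mathrm{full}}^{\,m}=O$ forces $\widehat{N}^{\,m}=O$, i.e.\ $\widehat{N}$ is an $m$-dual nilpotent matrix in the sense recalled in Section~\ref{sec2}.

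I do not expect a genuine obstacle: the substantive content — the construction of $\widehat{U}$ and the fact that conjugation by it brings $\widehat{A}$ to block upper triangular form — is already carried out in the proofs of Theorems~\ref{decompprev} and~\ref{DCEpdecomp}, and here I only need to read off the two block entries. The points that do require a little care are purely bookkeeping: keeping the appreciable index $r$ (which fixes the block sizes $t$ and $n-t$ and the exponent of $N$) distinct from the dual index $m$, and making explicit that ``invertible'' and ``nilpotent'' are meant in the dual-ring sense rather than for the standard parts alone. Accordingly I would keep the write-up brief, citing \eqref{Ahatexprsn}--\eqref{condition} and the splitting $\widehat{T},\widehat{N}$ from Theorem~\ref{DCEpdecomp}, and devote the remaining lines only to the two short verifications above.
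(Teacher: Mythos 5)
Your proposal is correct and follows essentially the same route the paper intends: the paper gives no separate proof of this theorem, stating only that it is obtained ``by analyzing the proof of Theorem~\ref{DCEpdecomp}'', which is precisely the repackaging you carry out, and your two added verifications (the explicit dual inverse $T_1^{-1}-\epsilon T_1^{-1}B_1T_1^{-1}$ of $\widehat{T_1}$ and the deduction of $\widehat{N}^{\,m}=O$ from the block-diagonal form) are exactly the details the paper leaves implicit.
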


The dual Core generalized inverse $\widehat{A}^{\core}$ is a dual $\{1\}$-inverse. $\widehat{A}^{\ep}$ is not a dual $\{1\}$-inverse when $\text{Ind}(\widehat{A})\neq 1$. 
The product $\widehat{A} \widehat{A}^{\ep} \widehat{A}$ is important to discuss the dual CEP generalized inverse, even though in this case $\widehat{A} \widehat{A}^{\ep} \widehat{A} \neq \widehat{A}$.

\begin{definition} Consider $\widehat{A}=A+\epsilon B \in \mathbb{D R}^{n \times n}$ such that the DCEPGI $\widehat{A}^{\ep}$ exists, then 
$$
\widehat{T}_A=\widehat{A}\widehat{A}^{\ep} \widehat{A}
$$
is called dual core part of $\widehat{A}$, and
$$
\widehat{N}_A=\widehat{A}-\widehat{A}\widehat{A}^{\ep} \widehat{A}
$$
is called dual nilpotent part of $\widehat{A}$.
\end{definition}
\begin{theorem} 
Consider $\widehat{A}\in \mathbb{D R}^{n \times n}$ be of the form \eqref{Dualcepdcmp} with $\text{AInd}(\widehat{A})=m,~ \text{Arank} (\widehat{A}^k)=t$ and let the DCEPGI $\widehat{A}^{\ep}$ of $\widehat{A}$ exists. Then
\begin{equation}
\widehat{A}^{\ep}=\widehat{U}\begin{bmatrix}
\widehat{T_1}^{-1} & O \\
O & O
\end{bmatrix} \widehat{U}^{T}.
\end{equation}
\end{theorem}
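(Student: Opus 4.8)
The plan is to exhibit the explicit candidate $\widehat{X}:=\widehat{U}\begin{bmatrix}\widehat{T_1}^{-1} & O\\ O & O\end{bmatrix}\widehat{U}^{T}$, which is well defined because $\widehat{T_1}$ is invertible and $\widehat{U}$ is unitary, to verify that it satisfies the three defining relations of the dual CEP inverse, namely $(\widehat{A}\widehat{X})^{T}=\widehat{A}\widehat{X}$, $\widehat{A}\widehat{X}^{2}=\widehat{X}$ and $\widehat{X}\widehat{A}^{m+1}=\widehat{A}^{m}$, and then to appeal to the uniqueness theorem of Section~\ref{sec3} to conclude $\widehat{A}^{\ep}=\widehat{X}$. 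Throughout I would use $\widehat{U}^{T}\widehat{U}=\widehat{U}\widehat{U}^{T}=I$ and $(\widehat{U}^{T})^{T}=\widehat{U}$, so that conjugation by $\widehat{U}$ is multiplicative and commutes with dual transposition.

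First I would record the block powers of $\widehat{A}$. From \eqref{Dualcepdcmp} one has $\widehat{A}^{k}=\widehat{U}\begin{bmatrix}\widehat{T_1}&\widehat{T_2}\\ O&\widehat{N}\end{bmatrix}^{k}\widehat{U}^{T}$, and an induction on $k$ for the block upper triangular factor gives $\begin{bmatrix}\widehat{T_1}&\widehat{T_2}\\ O&\widehat{N}\end{bmatrix}^{k}=\begin{bmatrix}\widehat{T_1}^{k}&\widehat{M_k}\\ O&\widehat{N}^{k}\end{bmatrix}$ with $\widehat{M_k}=\sum_{i=0}^{k-1}\widehat{T_1}^{i}\widehat{T_2}\widehat{N}^{k-1-i}$. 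Since $\widehat{N}$ is dual nilpotent with $\widehat{N}^{m}=O$, this yields $\widehat{A}^{m}=\widehat{U}\begin{bmatrix}\widehat{T_1}^{m}&\widehat{M_m}\\ O&O\end{bmatrix}\widehat{U}^{T}$; moreover in $\widehat{M}_{m+1}=\sum_{i=0}^{m}\widehat{T_1}^{i}\widehat{T_2}\widehat{N}^{m-i}$ the $i=0$ term $\widehat{T_2}\widehat{N}^{m}$ vanishes, so $\widehat{M}_{m+1}=\widehat{T_1}\widehat{M_m}$ and $\widehat{A}^{m+1}=\widehat{U}\begin{bmatrix}\widehat{T_1}^{m+1}&\widehat{T_1}\widehat{M_m}\\ O&O\end{bmatrix}\widehat{U}^{T}$. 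With these in hand the three verifications are short. One computes $\widehat{A}\widehat{X}=\widehat{U}\begin{bmatrix}I&O\\ O&O\end{bmatrix}\widehat{U}^{T}$, which equals its own dual transpose since $\begin{bmatrix}I&O\\ O&O\end{bmatrix}$ is symmetric, giving the first relation; $\widehat{X}^{2}=\widehat{U}\begin{bmatrix}\widehat{T_1}^{-2}&O\\ O&O\end{bmatrix}\widehat{U}^{T}$, hence $\widehat{A}\widehat{X}^{2}=\widehat{U}\begin{bmatrix}\widehat{T_1}^{-1}&O\\ O&O\end{bmatrix}\widehat{U}^{T}=\widehat{X}$, giving the second; and $\widehat{X}\widehat{A}^{m+1}=\widehat{U}\begin{bmatrix}\widehat{T_1}^{m}&\widehat{M_m}\\ O&O\end{bmatrix}\widehat{U}^{T}=\widehat{A}^{m}$, giving the third.

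I expect the only point requiring genuine care to be the index bookkeeping: one must ensure that the integer $m$ in the defining relation $\widehat{X}\widehat{A}^{m+1}=\widehat{A}^{m}$ is exactly the nilpotency exponent for which $\widehat{N}^{m}=O$ in the dual core-EP form \eqref{Dualcepdcmp}, which is where the hypothesis on $\text{AInd}(\widehat{A})$ together with the structure established in Theorem~\ref{DCEpdecomp} enters, and that all block products are formed inside the dual ring, where the $\epsilon$-part obeys the usual first-order rule and no sign subtleties arise. The assumed existence of $\widehat{A}^{\ep}$ is used only to legitimise the final appeal to the uniqueness theorem of Section~\ref{sec3}; once that is granted, the computation above pins down $\widehat{A}^{\ep}$ as the stated matrix.
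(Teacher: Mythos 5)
Your proposal is correct and follows essentially the same route as the paper's proof: both exhibit the candidate $\widehat{X}=\widehat{U}\begin{bmatrix}\widehat{T_1}^{-1} & O \\ O & O\end{bmatrix}\widehat{U}^{T}$, compute $\widehat{A}^{m}$ and $\widehat{A}^{m+1}$ in block form using $\widehat{N}^{m}=O$, and verify the three defining relations directly. Your explicit induction for the block powers and the observation that $\widehat{M}_{m+1}=\widehat{T_1}\widehat{M}_m$ merely make precise what the paper uses implicitly.
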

\begin{proof}
Since $\widehat{A}^{\ep}$ exists, it follows that  $\widehat{N}^{m}=O$. From \eqref{Dualcepdcmp}, we obtain
    $$
\widehat{A}^m=\widehat{U}\begin{bmatrix}
\widehat{T_1}^m & \widehat{H} \\
O & O
\end{bmatrix} \widehat{U}^{T},
$$
where $\tilde{H}=\sum_{i=0}^{m-1} \widehat{T_{1}}^{i}\widehat{T_{2}}\widehat{N}^{m-i-1}$. 
Consider
$$
\widehat{X}=\widehat{U}\begin{bmatrix}
\widehat{T_1}^{-1} & O \\
O & O
\end{bmatrix} \widehat{U}^{T} .
$$
Now, we verify all three conditions of definition of dual CEP inverse.
$$
\widehat{A} \widehat{X}=\widehat{U}\begin{bmatrix}
\widehat{T_1} & \widehat{T_2} \\
O & \widehat{N}
\end{bmatrix}\begin{bmatrix}
\widehat{T_1}^{-1} & O \\
O & O
\end{bmatrix} \widehat{P}^{-1}=\widehat{U}\begin{bmatrix}
I & O \\
O & O
\end{bmatrix} \widehat{U}^{T}=(\widehat{A} \widehat{X})^{T}.
$$
Further,
$$
\widehat{A} \widehat{X}^2=\widehat{U}\begin{bmatrix}
I & O \\
O & O
\end{bmatrix}\begin{bmatrix}
\widehat{T_1}^{-1} & O \\
O & O
\end{bmatrix} \widehat{U}^{T}=\widehat{U}\begin{bmatrix}
\widehat{T_1}^{-1} & O \\
O & O
\end{bmatrix} \widehat{U}^{T}=\widehat{X},
$$
and
$$
\widehat{X}\widehat{A}^{m+1} =\widehat{U}\begin{bmatrix}
\widehat{T_1}^{-1} & O \\
O & O
\end{bmatrix}\begin{bmatrix}
\widehat{T_1}^{m+1} & \widehat{T_1}\widehat{H} \\
O & O
\end{bmatrix} \widehat{U}^{T}=\widehat{U}\begin{bmatrix}
\widehat{T_1}^{m} & \widehat{H} \\
O & O
\end{bmatrix} \widehat{U}^{T}=\widehat{A}^m.
$$
Therefore, $\widehat{X}=\widehat{A}^{\ep}$.
\end{proof}
\begin{corollary}
    Consider $\widehat{A}\in \mathbb{D R}^{n \times n}$. Then $\widehat{A}^{\core}$ exists if and only if there exists a dual unitary matrix $\widehat{U}$ and a dual invertible matrix $\widehat{T_1}$ such that 
    \begin{equation}
        \widehat{A}=\widehat{U}\begin{bmatrix}
\widehat{T_1} & \widehat{T_2} \\
O & O
\end{bmatrix}\widehat{U}^{T}.
    \end{equation} Moreover, 
    \begin{equation}
        \widehat{A}^{\core}=\widehat{U}\begin{bmatrix}
\widehat{T_1}^{-1} & O \\
O & O
\end{bmatrix}\widehat{U}^{T}.
    \end{equation}
\end{corollary}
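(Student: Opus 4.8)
The plan is to recognize this corollary as the index-one specialization of the dual core-EP machinery already established. The starting point is the observation that the defining relations of the dual core inverse $\widehat{A}^{\core}$ coincide, verbatim, with those of the dual CEP inverse $\widehat{A}^{\ep}$ when $m=1$: both ask for $(\widehat{A}\widehat{X})^{T}=\widehat{A}\widehat{X}$, $\widehat{A}\widehat{X}^{2}=\widehat{X}$, and $\widehat{X}\widehat{A}^{2}=\widehat{A}$. Hence $\widehat{A}^{\core}$ exists exactly when $\widehat{A}^{\ep}$ exists together with $\mathrm{AInd}(\widehat{A})=1$, and in that case the two inverses agree. So I would reduce the whole statement to Theorem \ref{DCEpdecomp} and the subsequent closed-form theorem for $\widehat{A}^{\ep}$, both evaluated at $m=1$.

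For the necessity direction, assume $\widehat{A}^{\core}$ exists. Then the core inverse of the standard part $A$ exists, which forces $\mathrm{Ind}(A)\le 1$, and hence $\mathrm{AInd}(\widehat{A})=1$ (the degenerate possibility $A=O$ being handled separately and trivially). Applying the dual core-EP decomposition \eqref{Dualcepdcmp}, there is a dual unitary $\widehat{U}$ with $\widehat{A}$ in the block form of \eqref{Dualcepdcmp}, where $\widehat{T_1}$ is dual invertible and $\widehat{N}$ is dual nilpotent with $\widehat{N}^{m}=O$; taking $m=1$ forces $\widehat{N}=O$, which is precisely the block form claimed in the corollary.

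For the sufficiency direction, suppose $\widehat{A}$ is given in the stated block form with $\widehat{U}$ dual unitary and $\widehat{T_1}$ dual invertible. This is a dual core-EP decomposition with vanishing nilpotent block, so $\mathrm{AInd}(\widehat{A})=1$, and the preceding theorem applies with $m=1$: $\widehat{A}^{\ep}$ exists and is given by the formula in the corollary. Equivalently, one can verify this $\widehat{X}$ directly by the same block multiplications as in that theorem, checking $(\widehat{A}\widehat{X})^{T}=\widehat{A}\widehat{X}$, $\widehat{A}\widehat{X}^{2}=\widehat{X}$, and $\widehat{X}\widehat{A}^{2}=\widehat{A}$. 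The identification $\widehat{A}^{\core}=\widehat{A}^{\ep}$ at $m=1$ then delivers both the existence of $\widehat{A}^{\core}$ and the stated formula.

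The only genuine subtlety, and the step I would pin down most carefully, is the bridge at the start: that existence of $\widehat{A}^{\core}$ forces $\mathrm{AInd}(\widehat{A})=1$ and that the dual core inverse is literally the $m=1$ instance of the dual CEP inverse. Once this is in place, the corollary follows from the earlier theorems with no further computation beyond collapsing the nilpotent block in the dual core-EP decomposition.
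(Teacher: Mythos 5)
Your overall strategy --- read the corollary as the $m=1$ specialization of Theorem \ref{DCEpdecomp} and of the representation theorem that immediately precedes the corollary, after identifying $\widehat{A}^{\core}$ with $\widehat{A}^{\ep}$ at index one --- is exactly the route the paper intends (it offers no separate proof). Your sufficiency direction is sound, and so is the bridge between the two inverses at $m=1$: from $\widehat{X}\widehat{A}^{2}=\widehat{A}$ and $\widehat{A}\widehat{X}^{2}=\widehat{X}$ one gets $\widehat{A}\widehat{X}\widehat{A}=(\widehat{A}\widehat{X}^{2})\widehat{A}^{2}=\widehat{X}\widehat{A}^{2}=\widehat{A}$, so the $\{1\}$-inverse property of the dual core inverse comes for free from the $m=1$ CEP equations.

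The gap is in the necessity direction, at the step ``taking $m=1$ forces $\widehat{N}=O$.'' The exponent in $\widehat{N}^{m}=O$ in Theorem \ref{DCEpdecomp} is the \emph{dual} index $\mathrm{Ind}(\widehat{A})$, not the appreciable index. Existence of $\widehat{A}^{\core}$ does give $A\mathrm{Ind}(\widehat{A})=1$, but the dual index may still be $2$: with $N=O$ the nilpotent block produced by the decomposition is the purely infinitesimal matrix $\widehat{N}=\epsilon(B_{4}-U_{3}T_{2})$, which always satisfies $\widehat{N}^{2}=O$ but need not vanish. So appreciable index one alone does not collapse the $(2,2)$ block, and a dual matrix whose standard part is core invertible need not admit the block form claimed in the corollary --- that is precisely what the ``only if'' is asserting. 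To close the gap you must feed the existence of $\widehat{A}^{\core}$ (equivalently, of $\widehat{A}^{\ep}$ with $m=1$) back into Theorem \ref{equivalent}: its condition (ii) at $m=1$ reduces to $B_{4}=B_{3}T_{1}^{-1}T_{2}$, and since the construction in Theorem \ref{decompprev} takes $U_{3}=B_{3}T_{1}^{-1}$, this is exactly the identity that makes the $(2,2)$ block $B_{4}-U_{3}T_{2}$ of $\widehat{U}^{T}\widehat{A}\widehat{U}$ vanish. With that one extra computation the necessity direction, and hence the corollary, goes through as you describe.
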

For DCEPGI, the compact formula is given below.
\begin{theorem}\label{compactformula} 
Consider $\widehat{A}=A+\epsilon B \in \mathbb{D}\mathbb{R}^{n \times n}$ with Ind$(A)=m$. Assume that $\widehat{A}^{m}={A}^{m}+\epsilon S$, where $S:=\sum_{i=1}^{m} A^{m-i}BA^{i-1}$. If  both DCEPGI $\widehat{A}^{\ep}$ and DDGI $\widehat{A}^{D}$ exist, then 

\begin{equation*}
\begin{split}
 \widehat{A}^{\ep} & =\widehat{A}^{D} \widehat{A}^{m} (\widehat{A}^{m})^{\dagger} \\
& \qquad =A^{\ep}+\epsilon [-A^{\ep}S(A^{m})^{\dagger}+A^{D}A^{m}((A^{m})^{T}A^{m})^{\dagger}S^{T}(I-A^{m}(A^{m})^{\dagger})+A^{D}S(A^{m})^{\dagger} \\ & \qquad-A^{D}BA^{D}A^{m}(A^{m})^{\dagger}+\sum_{i=0}^{m-1} A^{\pi}A^{i}B(A^{D})^{i+2}A^{m}(A^{m})^{\dagger}],
\end{split}
\end{equation*}
where $A^{\pi}=I-AA^{D}$.
\end{theorem}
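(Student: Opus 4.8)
\emph{Strategy.} I would prove the two equalities one at a time: first the operator identity $\widehat{A}^{\ep}=\widehat{A}^{D}\widehat{A}^{m}(\widehat{A}^{m})^{\dagger}$, by checking the right-hand side against the three defining equations of the dual core-EP inverse, and then the explicit expansion, by substituting the compact formulas already available for $\widehat{A}^{D}$, $\widehat{A}^{m}$ and $(\widehat{A}^{m})^{\dagger}$ and simplifying the coefficient of $\epsilon$.

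\emph{Step 1 (the operator identity).} Set $\widehat{X}:=\widehat{A}^{D}\widehat{A}^{m}(\widehat{A}^{m})^{\dagger}$; this is meaningful because the existence of $\widehat{A}^{D}$ already guarantees the existence of $(\widehat{A}^{m})^{\dagger}$ (one of the equivalent conditions recalled in Section~2 for the dual Drazin inverse). I would verify $(\widehat{A}\widehat{X})^{T}=\widehat{A}\widehat{X}$, $\widehat{A}\widehat{X}^{2}=\widehat{X}$ and $\widehat{X}\widehat{A}^{m+1}=\widehat{A}^{m}$ using only the abstract relations $\widehat{A}^{D}\widehat{A}^{m+1}=\widehat{A}^{m}$, $\widehat{A}^{D}\widehat{A}\widehat{A}^{D}=\widehat{A}^{D}$ and $\widehat{A}\widehat{A}^{D}=\widehat{A}^{D}\widehat{A}$ (valid since $\widehat{A}^{D}\in\widehat{A}\{1^{m},2,5\}$), the last of which yields $\widehat{A}^{D}\widehat{A}^{m}=\widehat{A}^{m}\widehat{A}^{D}$, together with the four Penrose equations for $(\widehat{A}^{m})^{\dagger}$. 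Concretely, $\widehat{A}\widehat{X}=\widehat{A}^{D}\widehat{A}^{m+1}(\widehat{A}^{m})^{\dagger}=\widehat{A}^{m}(\widehat{A}^{m})^{\dagger}$ is symmetric; $\widehat{A}\widehat{X}^{2}=\widehat{A}^{m}(\widehat{A}^{m})^{\dagger}\widehat{A}^{m}\widehat{A}^{D}(\widehat{A}^{m})^{\dagger}=\widehat{A}^{m}\widehat{A}^{D}(\widehat{A}^{m})^{\dagger}=\widehat{A}^{D}\widehat{A}^{m}(\widehat{A}^{m})^{\dagger}=\widehat{X}$; and $\widehat{X}\widehat{A}^{m+1}=\widehat{A}^{D}\widehat{A}^{m}(\widehat{A}^{m})^{\dagger}\widehat{A}^{m}\widehat{A}=\widehat{A}^{D}\widehat{A}^{m+1}=\widehat{A}^{m}$. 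By the uniqueness of the dual core-EP inverse established earlier in this section, $\widehat{A}^{\ep}=\widehat{X}$, which gives the first displayed equality and also reproduces, at the dual level, the real-matrix identity of Lemma~\ref{lem2.8}.

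\emph{Step 2 (the expansion).} Next I would substitute $\widehat{A}^{D}=A^{D}+\epsilon D_{0}$ with $D_{0}$ the infinitesimal part in \eqref{A^D}, $\widehat{A}^{m}=A^{m}+\epsilon S$ with $S=\sum_{i=1}^{m}A^{m-i}BA^{i-1}$, and $(\widehat{A}^{m})^{\dagger}=(A^{m})^{\dagger}+\epsilon E_{0}$, where $E_{0}$ is Wang's infinitesimal part of the DMPGI evaluated at $\widehat{A}^{m}$ (replace $A$ by $A^{m}$ and $A_{0}$ by $S$), namely
\begin{align*}
E_{0}&=-(A^{m})^{\dagger}S(A^{m})^{\dagger}+\bigl((A^{m})^{T}A^{m}\bigr)^{\dagger}S^{T}\bigl(I-A^{m}(A^{m})^{\dagger}\bigr)\\
&\quad+\bigl(I-(A^{m})^{\dagger}A^{m}\bigr)S^{T}\bigl(A^{m}(A^{m})^{T}\bigr)^{\dagger}.
\end{align*}
Expanding $\widehat{A}^{D}\widehat{A}^{m}(\widehat{A}^{m})^{\dagger}$ modulo $\epsilon^{2}$, the standard part is $A^{D}A^{m}(A^{m})^{\dagger}=A^{\ep}$ (Lemma~\ref{lem2.8}) and the infinitesimal part is $D_{0}A^{m}(A^{m})^{\dagger}+A^{D}S(A^{m})^{\dagger}+A^{D}A^{m}E_{0}$. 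Writing $A^{\pi}=I-AA^{D}$, I would use the projector identities $A^{\pi}A^{m}=O=A^{m}A^{\pi}$ (which hold since $\mathrm{Ind}(A)=m$, so $A^{D}A^{m+1}=A^{m}$ and $A^{D}A=AA^{D}$), hence $A^{\pi}A^{m}(A^{m})^{\dagger}=O$: this annihilates the middle sum of $D_{0}$ upon right multiplication by $A^{m}(A^{m})^{\dagger}$, leaving $D_{0}A^{m}(A^{m})^{\dagger}=-A^{D}BA^{D}A^{m}(A^{m})^{\dagger}+\sum_{i=0}^{m-1}A^{\pi}A^{i}B(A^{D})^{i+2}A^{m}(A^{m})^{\dagger}$. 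In $A^{D}A^{m}E_{0}$, I would use $A^{D}A^{m}(A^{m})^{\dagger}=A^{\ep}$ and $A^{D}A^{m}\bigl(I-(A^{m})^{\dagger}A^{m}\bigr)=O$ (because $A^{m}(A^{m})^{\dagger}A^{m}=A^{m}$), so the third summand of $E_{0}$ drops out and $A^{D}A^{m}E_{0}=-A^{\ep}S(A^{m})^{\dagger}+A^{D}A^{m}\bigl((A^{m})^{T}A^{m}\bigr)^{\dagger}S^{T}\bigl(I-A^{m}(A^{m})^{\dagger}\bigr)$. Adding the three pieces produces exactly the stated formula.

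\emph{Main obstacle.} The delicate part lies entirely in Step~2: instantiating Wang's DMPGI formula at the dual matrix $\widehat{A}^{m}$ so that $E_{0}$ acquires the three-term shape above, and then carefully collecting the coefficient of $\epsilon$ and recognizing which of its many summands are killed by the identities $A^{\pi}A^{m}=O$ and $A^{m}(A^{m})^{\dagger}A^{m}=A^{m}$. Step~1 is routine bookkeeping with the standard Drazin and Penrose relations once they are written down.
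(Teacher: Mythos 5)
Your proposal is correct and follows essentially the same route as the paper: it defines $\widehat{X}=\widehat{A}^{D}\widehat{A}^{m}(\widehat{A}^{m})^{\dagger}$, verifies the three defining equations of the dual core-EP inverse with the same Drazin/Penrose manipulations, and then substitutes Wang's DMPGI formula for $(\widehat{A}^{m})^{\dagger}$ together with \eqref{A^D} to extract the infinitesimal part. Your Step 2 is in fact more explicit than the paper's (which compresses the expansion into one sentence), and the cancellations you identify via $A^{\pi}A^{m}=O$ and $A^{D}A^{m}(I-(A^{m})^{\dagger}A^{m})=O$ are exactly the ones needed.
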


\begin{proof} According to \cite[Theorem 2.2]{dualDrazin}, if $\widehat{A}^{D}$ exists, then  $(\widehat{A}^{m})^{\dagger}$ exists. Denote $\widehat{X}=\widehat{A}^{D} \widehat{A}^{m} (\widehat{A}^{m})^{\dagger}$. Clearly 
$$
\begin{gathered}
\widehat{X} \widehat{A}^{m+1}=\widehat{A}^{D} \widehat{A}^{m} (\widehat{A}^{m})^{\dagger} \widehat{A}^{m+1}=\widehat{A}^{D} \widehat{A}^{m+1}=\widehat{A}^{m}, \\
\widehat{A} \widehat{X}^2=\widehat{A} \widehat{A}^{D} \widehat{A}^{m}(\widehat{A}^{m})^{\dagger}\widehat{A}^{D} \widehat{A}^{m} (\widehat{A}^{m})^{\dagger}=\widehat{A}^{m} (\widehat{A}^{m})^{\dagger}\widehat{A}^{m} \widehat{A}^{D} (\widehat{A}^{m})^{\dagger}=\widehat{A}^{D} \widehat{A}^{m} (\widehat{A}^{m})^{\dagger}=\widehat{X}.
\end{gathered}
$$

Also $(\widehat{A} \widehat{X})^{T}=\left(\widehat{A} \widehat{A}^{D} \widehat{A}^{m} (\widehat{A}^{m})^{\dagger}\right)^{T}=\left(\widehat{A}^{m} (\widehat{A}^{m})^{\dagger}\right)^{T}=\widehat{A}^{m} (\widehat{A}^{m})^{\dagger}=\widehat{A} \widehat{X}$.

Therefore,  $\widehat{A}^{\ep}=\widehat{X}=\widehat{A}^{D} \widehat{A}^{m} (\widehat{A}^{m})^{\dagger}$. Since $(\widehat{A}^{m})^{\dagger}$ exists then one can get $(\widehat{A}^{m})^{\dagger}=(A^{m})^{\dagger}+\epsilon R$, where
\begin{align*}
R=-(A^{m})^{\dagger}S(A^{m})^{\dagger}+((A^{m})^{T}A^{m})^{\dagger}S^{T}(I-A^{m}(A^{m})^{\dagger})+(I-(A^{m})^{\dagger}A^{m})S^{T}(A^{m}(A^{m})^{T})^{\dagger}.
\end{align*}
Substituting the value of $R$ and equation \eqref{A^D}  into $\widehat{A}^{D} \widehat{A}^{m} (\widehat{A}^{m})^{\dagger}$, we obtain the desired result.
\end{proof}
Additionally, using equations \eqref{Bdcomp}, \eqref{eqncEPD}, \eqref{cepdcmp}, \eqref{Sdcomp}, and  Theorem \ref{equivalent}, we have the following canonical representation of the dual CEP inverse of $\widehat{A}$.
\begin{theorem}
 For $\widehat{A}=A+\epsilon B \in \mathbb{D}\mathbb{R}^{n \times n}$, consider $A$ and $B$ be respectively, of the form \eqref{eqncEPD} and \eqref{Bdcomp},  respectively, and the DCEPGI of $\widehat{A}$ exists. Then, \begin{align}\label{dualCEPexprsn}
      \begin{split}  & \widehat{A}^{\ep}=U\begin{bmatrix}T_{1}^{-1} & O \\ O & O\end{bmatrix} U^{T} + \\ & \epsilon U\begin{bmatrix}
-T_{1}^{-1}B_{1}T_{1}^{-1}-T_{1}^{-1}T_{2}\sum_{i=1}^{m}N^{m-i}B_3T_{1}^{i-m-2} & T_{1}^{-1}(\sum_{i=1}^{m}N^{m-i}B_3T_{1}^{i-m-1})^{T}\\\sum_{i=1}^{m}N^{m-i}B_3T_{1}^{i-m-2} & O
\end{bmatrix}U^{T}.
   \end{split} 
   \end{align}
\end{theorem}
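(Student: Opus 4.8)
The plan is to observe that the asserted formula \eqref{dualCEPexprsn} is nothing but the dual matrix $\widehat{X}$ that was explicitly constructed in the proof of the implication (iii)$\Rightarrow$(i) of Theorem~\ref{equivalent}. Since the DCEPGI $\widehat{A}^{\ep}$ is assumed to exist, Theorem~\ref{equivalent} guarantees that condition (iii), i.e. $\left(I-A^{m}(A^{m})^{\ep}\right)S\left(I-(A^{m})^{\ep}A^{m}\right)=O$ with $S=\sum_{i=1}^{m}A^{m-i}BA^{i-1}$, holds. Under that hypothesis we already verified, in the proof of (iii)$\Rightarrow$(i), that the dual matrix
$$
\widehat{X}=U\begin{bmatrix}T_{1}^{-1} & O \\ O & O\end{bmatrix} U^{T}+\epsilon U\begin{bmatrix}
-T_{1}^{-1}B_{1}T_{1}^{-1}-T_{1}^{-1}T_{2}\sum_{i=1}^{m}N^{m-i}B_3T_{1}^{i-m-2} & T_{1}^{-1}\bigl(\sum_{i=1}^{m}N^{m-i}B_3T_{1}^{i-m-1}\bigr)^{T}\\ \sum_{i=1}^{m}N^{m-i}B_3T_{1}^{i-m-2} & O
\end{bmatrix}U^{T}
$$
satisfies the three defining relations $(\widehat{A}\widehat{X})^{T}=\widehat{A}\widehat{X}$, $\widehat{A}\widehat{X}^{2}=\widehat{X}$ and $\widehat{X}\widehat{A}^{m+1}=\widehat{A}^{m}$, where $A$ and $B$ are written in the forms \eqref{eqncEPD} and \eqref{Bdcomp}.

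First I would note that the standard part of $\widehat{X}$ is exactly the real core-EP inverse $A^{\ep}$ of \eqref{cepdcmp}, which is consistent with Lemma~\ref{lemsyst1}. Then I would invoke the uniqueness of the dual core-EP inverse (established in Section~\ref{sec3}): if $\widehat{A}^{\ep}$ exists, it is the unique dual matrix satisfying those three equations. Consequently $\widehat{A}^{\ep}=\widehat{X}$, and splitting $\widehat{X}$ into its standard and infinitesimal parts yields precisely \eqref{dualCEPexprsn}. In this route the proof amounts to quoting Theorem~\ref{equivalent} together with the uniqueness result, so essentially no new computation is needed.

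For a self-contained alternative I would instead start from the compact formula $\widehat{A}^{\ep}=\widehat{A}^{D}\widehat{A}^{m}(\widehat{A}^{m})^{\dagger}$ of Theorem~\ref{compactformula}, substitute the block expressions of $\widehat{A}^{D}$ (from the dual analogue of \eqref{Drazincepdcmp}), of $\widehat{A}^{m}=A^{m}+\epsilon S$ with $S$ partitioned as in \eqref{Sdcomp}, and of $(\widehat{A}^{m})^{\dagger}$, and then multiply out the $2\times2$ dual block matrices, collecting the coefficient of $\epsilon$. Here the only delicate point is the bookkeeping: one must use that $T_{1}$ is nonsingular and that $N$ is nilpotent of index $m$ to telescope the sums coming from $S$, and use the standing hypothesis (equivalently, condition (iii) of Theorem~\ref{equivalent}) to see that the $(2,2)$-block of the infinitesimal part vanishes. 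I expect this block-matrix bookkeeping — matching the $S_{4}$-type block against the terms produced by $\widehat{X}\widehat{A}^{m+1}$ — to be the main obstacle along this second route, whereas the first route reduces the statement to results already in hand.
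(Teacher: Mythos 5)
Your first route is exactly the paper's own argument: the paper gives no separate proof, stating only that the formula follows from \eqref{eqncEPD}, \eqref{Bdcomp}, \eqref{cepdcmp}, \eqref{Sdcomp} and Theorem \ref{equivalent}, i.e.\ the matrix $\widehat{X}$ built and verified in the (iii)$\Rightarrow$(i) step of that theorem is a dual CEP inverse, and uniqueness forces $\widehat{A}^{\ep}=\widehat{X}$. Your proposal is correct and matches this reasoning; the alternative route via the compact formula is unnecessary but plausible.
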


\section{Relations with other dual generalized inverses}\label{sec5}

In this section, by considering a few characterizations of DCEPGI  and 
DDGI, 
we investigate the relationships between different dual generalized inverses.
\begin{theorem}\label{relationthm}
    Consider $\widehat{A}=A+\epsilon B \in \mathbb{D}\mathbb{R}^{n \times n}$ and $\text{Ind}(A)=m$. Assume that $\widehat{A}^{m}={A}^{m}+\epsilon S$, where $S:=\sum_{i=1}^{m} A^{m-i}BA^{i-1}$. If the DCEPGI $\widehat{A}^{\ep}$ exists, then the following are equivalent: 
    \begin{enumerate}[\rm(i)]
        \item  $\widehat{A}^{\ep}=A^{\ep}-\epsilon A^{\ep}BA^{\ep}$,
        \item $(I-A^{m}(A^{m})^{\dagger})S=O$,
        \item  $(I-AA^{\ep})S=O$,
        \item $AA^{\ep}S=SAA^{\ep}=S$,
        \item $\rg({S})\subseteq \rg({A^{m}})$ and $\nl((A^{m})^{*}) \subseteq \nl({S})$.
    \end{enumerate}
\end{theorem}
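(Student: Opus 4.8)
The plan is to prove the equivalences in a cycle, using the canonical block form of $\widehat{A}$ from Theorem \ref{decompprev} (or equivalently the decomposition \eqref{eqncEPD}, \eqref{cepdcmp}, \eqref{Sdcomp}) as the unifying computational device, together with the compact formula for $\widehat{A}^{\ep}$ from Theorem \ref{compactformula} and the explicit expression \eqref{dualCEPexprsn}. I would establish (i)$\Leftrightarrow$(ii), then (ii)$\Leftrightarrow$(iii), then (iii)$\Leftrightarrow$(iv), and finally (iv)$\Leftrightarrow$(v), since each adjacent pair is naturally comparable.

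First I would handle (ii)$\Leftrightarrow$(v): the statement $\rg(S)\subseteq\rg(A^m)$ is equivalent to $A^m(A^m)^{\dagger}S=S$, i.e. $(I-A^m(A^m)^{\dagger})S=O$, which is exactly (ii), while $\nl((A^m)^{*})\subseteq\nl(S)$ is a consequence that follows automatically (or is vacuous) once the range condition holds — here I would be careful about whether both inclusions in (v) are needed or whether the first already forces the second in the dual setting, and state the range-null duality precisely. Next, (iii)$\Leftrightarrow$(iv) is essentially linear algebra over the reals applied to $S$: since $AA^{\ep}=A^m(A^m)^{\dagger}$ is an orthogonal projector onto $\rg(A^m)$ (by Lemma \ref{lem2.8} and \eqref{cepdcmp}), the condition $(I-AA^{\ep})S=O$ says $S=AA^{\ep}S$; I then need to show this self-improves to also give $SAA^{\ep}=S$. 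This is where the block structure does the work: writing everything in the $U$-coordinates of \eqref{eqncEPD}, $(I-AA^{\ep})S=O$ forces the bottom block row of $S$ to vanish, and I combine this with the already-known constraint from Theorem \ref{equivalent}(iii) (which holds because $\widehat{A}^{\ep}$ is assumed to exist) to kill the remaining off-diagonal block, yielding $SAA^{\ep}=S$ as well. The converse direction of (iii)$\Leftrightarrow$(iv) is immediate.

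Then (ii)$\Leftrightarrow$(iii) is the observation that $A^m(A^m)^{\dagger}=AA^{\ep}$, which is Lemma \ref{lem2.8}, so the two conditions are literally the same equation. Finally, for (i)$\Leftrightarrow$(ii), I would start from the compact formula $\widehat{A}^{\ep}=A^{\ep}+\epsilon[\,\cdots]$ in Theorem \ref{compactformula} (or from \eqref{dualCEPexprsn}) and show that, under (ii), all the terms in the infinitesimal part collapse to $-A^{\ep}BA^{\ep}$: the term $-A^{\ep}S(A^m)^{\dagger}$ survives but the correction terms involving $(I-A^m(A^m)^{\dagger})$ and the sum $\sum A^{\pi}A^iB(A^D)^{i+2}A^m(A^m)^{\dagger}$ vanish because $(I-AA^{\ep})S=O$ propagates through, and then one reorganizes $-A^{\ep}S(A^m)^{\dagger}+A^DS(A^m)^{\dagger}-A^DBA^DA^m(A^m)^{\dagger}$ into $-A^{\ep}BA^{\ep}$ using $A^{\ep}=A^DA^m(A^m)^{\dagger}$ and $S=\sum A^{m-i}BA^{i-1}$. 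Conversely, if $\widehat{A}^{\ep}=A^{\ep}-\epsilon A^{\ep}BA^{\ep}$, comparing the infinitesimal part with \eqref{dualCEPexprsn} gives an equation that, read off in block form, forces $\sum_{i=1}^m N^{m-i}B_3T_1^{i-m-2}=O$, which is precisely the vanishing of the nonzero block of $(I-A^m(A^m)^{\dagger})S$, i.e. (ii).

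The main obstacle I anticipate is the bookkeeping in (i)$\Leftrightarrow$(ii): reconciling the ``coordinate-free'' compact formula of Theorem \ref{compactformula} with the block formula \eqref{dualCEPexprsn} and showing the many terms telescope correctly requires care with the identities $A^{\ep}A^D=(A^D)^2A^m(A^m)^{\dagger}$, $A^{\pi}A^{\ep}=O$, and $A^{\ep}S(A^m)^{\dagger}$ versus $A^DS(A^m)^{\dagger}$, especially tracking which summands are annihilated by the projector $A^m(A^m)^{\dagger}$ once the range hypothesis is imposed. A secondary subtlety is making sure the null-space inclusion in (v) is handled correctly — I would phrase it via $\nl(X^{*})=\rg(X)^{\perp}$ so that $\rg(S)\subseteq\rg(A^m)$ and $\nl((A^m)^{*})\subseteq\nl(S)$ become, respectively, $A^m(A^m)^{\dagger}S=S$ and $S=S A^m(A^m)^{\dagger}$ (using that $(A^m)^{\dagger}A^m$ and $A^m(A^m)^{\dagger}$ are the relevant orthogonal projectors), after which (v) is just a restatement of (iv).
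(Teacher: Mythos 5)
Your overall strategy --- block coordinates from \eqref{eqncEPD}/\eqref{Sdcomp} together with the compact formula of Theorem \ref{compactformula} and the representation \eqref{dualCEPexprsn} --- is the same as the paper's, and your handling of (i)$\Leftrightarrow$(ii), of (ii)$\Leftrightarrow$(iii) via $AA^{\ep}=A^m(A^m)^{\dagger}$, and of (iv)$\Leftrightarrow$(v) via the orthogonal projector $AA^{\ep}=P_{\rg(A^{m}),\nl((A^{m})^{*})}$ all match what the paper does. The genuine gap is your step (iii)$\Rightarrow$(iv). Write $S=U\bigl[\begin{smallmatrix}S_1&S_2\\S_3&S_4\end{smallmatrix}\bigr]U^{T}$ and $AA^{\ep}=U\bigl[\begin{smallmatrix}I&O\\O&O\end{smallmatrix}\bigr]U^{T}$. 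Condition (iii) kills the bottom block row, $S_3=S_4=O$, but $SAA^{\ep}=S$ additionally requires $S_2=O$, and the constraint you invoke from Theorem \ref{equivalent}(iii), namely $(I-A^{m}(A^{m})^{\ep})S(I-(A^{m})^{\ep}A^{m})=O$, lives entirely in the bottom block row (it only ties $S_4$ to $S_3$) and says nothing about $S_2$. So the ``remaining off-diagonal block'' you hope to kill is untouched by the existence hypothesis, and the implication fails outright: take $A=\bigl[\begin{smallmatrix}1&1\\0&0\end{smallmatrix}\bigr]$, $B=\bigl[\begin{smallmatrix}0&1\\0&0\end{smallmatrix}\bigr]$, so $m=1$ and $S=B$. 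Then $\widehat{A}^{\ep}$ exists and equals $A^{\ep}-\epsilon A^{\ep}BA^{\ep}$ and $(I-AA^{\ep})S=O$, so (i)--(iii) hold; yet $SAA^{\ep}=O\neq S$ and $\nl(A^{*})\not\subseteq\nl(S)$, so (iv) and (v) fail. You have in fact run into a defect of the statement itself --- one the paper's own proof never confronts either, since it only records (i)$\Leftrightarrow$(ii)$\Leftrightarrow$(iii) and (iv)$\Leftrightarrow$(v) and never supplies the bridge between the two groups.

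A second, related slip: in your opening paragraph you assert that $\nl((A^{m})^{*})\subseteq\nl(S)$ ``follows automatically'' from $\rg(S)\subseteq\rg(A^{m})$. It does not --- these are the independent conditions $A^{m}(A^{m})^{\dagger}S=S$ and $SA^{m}(A^{m})^{\dagger}=S$ (left versus right multiplication by the same orthogonal projector), and the $2\times 2$ example above satisfies the first while violating the second. Your closing paragraph gets the reduction of (v) to (iv) right, but the (ii)$\Leftrightarrow$(v) link you sketch at the start would then inherit exactly the gap described above.
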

\begin{proof}
(i)$\Rightarrow$(ii):  Let $A$ and $B$ be of the form \eqref{eqncEPD} and \eqref{Bdcomp}, respectively, and let $A$ be core-EP invertible. Then,
\begin{equation}\label{eqn6.1}
    -A^{\ep}BA^{\ep}=U\begin{bmatrix}
-T_{1}^{-1}B_{1}T_{1}^{-1} & O \\
O & O\end{bmatrix}U^{T},
\end{equation}
\begin{equation}
    -A^{\ep}S(A^{m})^{\dagger}+A^{D}S(A^{m})^{\dagger}=O,
\end{equation}
\begin{equation}
    A^{D}A^{m}((A^{m})^{T}A^{m})^{\dagger}S^{T}(I-A^{m}(A^{m})^{\dagger})=U\begin{bmatrix}
O & T_{1}^{-1}(\sum_{i=1}^{m} N^{m-i}B_{3}T_{1}^{i-m-1})^{T} \\
O & O\end{bmatrix}U^{T},
\end{equation}
\begin{equation}
    -A^{D}BA^{D}A^{m}(A^{m})^{\dagger}=U\begin{bmatrix}
-T_{1}^{-1}B_{1}T_{1}^{-1} & O \\
O & O\end{bmatrix}U^{T},
\end{equation}
\begin{equation}\label{eqn6.5}
    \sum_{i=0}^{m-1} A^{\pi}A^{i}B(A^{D})^{i+2}A^{m}(A^{m})^{\dagger}=U\begin{bmatrix}
-T_{1}^{-1}T_{2}\sum_{i=1}^{m}N^{m-i}B_3T_{1}^{i-m-2} & O \\
\sum_{i=1}^{m} N^{m-i}B_{3}T_{1}^{i-m-2} & O\end{bmatrix}U^{T}, 
\end{equation} and 
\begin{equation}\label{eqn6.6}
    (I-A^{m}(A^{m})^{\dagger})S=U\begin{bmatrix}
O & O \\
\sum_{i=1}^{m} N^{m-i}B_{3}T_{1}^{i-1} & \sum_{i=1}^{m} N^{m-i}B_3F+N^{m-i}B_4 N^{i-1} \end{bmatrix}U^{T}.
\end{equation}
 Since the DCEPGI $\widehat{A}^{\ep}$ of $\widehat{A}=A+\epsilon B \in \mathbb{D}\mathbb{R}^{n \times n}$ exists and $\widehat{A}^{\ep}=A^{\ep}-\epsilon A^{\ep}BA^{\ep}$. By applying Theorem \ref{equivalent}, we obtain $\sum_{i=1}^{m} N^{m-i}B_3F+N^{m-i}B_4 N^{i-1}=\sum_{i=1}^{m} N^{m-i}B_{3}T_{1}^{i-m-1}\Tilde{T}$. To show $\widehat{A}^{\ep}=A^{\ep}-\epsilon A^{\ep}BA^{\ep}$, from Theorem \ref{compactformula}, we obtain
\begin{center}
$A^{\ep}BA^{\ep}=-A^{\ep}S(A^{m})^{\dagger}+A^{D}A^{m}((A^{m})^{T}A^{m})^{\dagger}S^{T}(I-A^{m}(A^{m})^{\dagger})+A^{D}S(A^{m})^{\dagger}-A^{D}BA^{D}A^{m}(A^{m})^{\dagger}+\sum_{i=0}^{m-1} A^{\pi}A^{i}B(A^{D})^{i+2}A^{m}(A^{m})^{\dagger}.$
\end{center}
From \eqref{eqn6.1}-\eqref{eqn6.5}, it follows that $\sum_{i=1}^{m} N^{m-i}B_{3}T_{1}^{i-1}=O$ and $\sum_{i=1}^{m} N^{m-i}B_{3}T_{1}^{i-m-1}\Tilde{T}=O$. Hence from the equation \eqref{eqn6.6}, we obtain $(I-A^{m}(A^{m})^{\dagger})S=O$.\\
(ii)$\Rightarrow$(i):  Suppose that $(I-A^{m}(A^{m})^{\dagger})S=O$. Since the DCEPGI of $\widehat{A}$ exists, then $A^{\ep}$ is the real part of the DCEPGI, i.e., $\widehat{A}^{\ep}=A^{\ep}+\epsilon R$. From $(I-A^{m}(A^{m})^{\dagger})S=O$, we obtain $\sum_{i=1}^{m} N^{m-i}B_{3}T_{1}^{i-1}=O$. Now the expression from Theorem \ref{compactformula}, for $A^{\pi}=I-AA^{D}$, we obtain 
$$
\begin{aligned}
R & =-A^{\ep}S(A^{m})^{\dagger}+A^{D}A^{m}((A^{m})^{T}A^{m})^{\dagger}S^{T}(I-A^{m}(A^{m})^{\dagger})+A^{D}S(A^{m})^{\dagger}\\ & \qquad -A^{D}BA^{D}A^{m}(A^{m})^{\dagger}+\sum_{i=0}^{m-1} A^{\pi}A^{i}B(A^{D})^{i+2}A^{m}(A^{m})^{\dagger} \\
& \qquad =U\begin{bmatrix}
-T_{1}^{-1}B_1T_{1}^{-1} & O \\
O & O
\end{bmatrix} U^{T}=-A^{\ep}BA^{\ep}.
\end{aligned}
$$
Therefore, $\widehat{A}^{\ep}=A^{\ep}-\epsilon A^{\ep} B A^{\ep}$.\\
(i)$\Leftrightarrow$(iii): Similar to (i)$\Leftrightarrow$(ii).\\
(i)$\Leftrightarrow$(iii): It follows from (i)$\Leftrightarrow$(ii).\\
(iv)$\Leftrightarrow$(v): Since $AA^{\ep}=P_{\rg(A^{m}),\nl((A^{m})^{*})}$ is a projector, then $AA^{\ep}S=S$ if and only if $\rg({S})\subseteq \rg({A^{m}})$ and also $SAA^{\ep}=S$ if and only if $\nl((A^{m})^{*}) \subseteq \nl({S})$.
\end{proof}
Utilizing the equivalence property between $(I-A^{m}(A^{m})^{\dagger})S=0$ and $\text{rank}\left(\begin{bmatrix}A^{m} & S\end{bmatrix}\right)=\text{rank}(A^{m})$, we obtain the following result.
\begin{theorem}
    Consider $\widehat{A}=A+\epsilon B \in \mathbb{D}\mathbb{R}^{n \times n}$ with $\text{Ind}(A)=m$ and the DCEPGI $\widehat{A}^{\ep}$ exists. Denote $\widehat{A}^{m}={A}^{m}+\epsilon S$, where $S:=\sum_{i=1}^{m} A^{m-i}BA^{i-1}$. Then
    \begin{center}
      $
\text{rank}\left(\begin{bmatrix}
    A^{m} & S
\end{bmatrix}\right)=\text{rank}(A^{m})\iff  \widehat{A}^{\ep}=A^{\ep}-\epsilon A^{\ep}BA^{\ep}$.  
    \end{center}
    
\end{theorem}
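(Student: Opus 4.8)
The plan is to reduce the biconditional to the already-proven equivalence in Theorem~\ref{relationthm}, using the standard rank identity for the Moore--Penrose inverse as the only extra ingredient. Concretely, for any real matrix $M$ one has $(I-MM^{\dagger})N=O$ if and only if $\rg(N)\subseteq\rg(M)$, and the latter is equivalent to $\text{rank}\left(\begin{bmatrix}M & N\end{bmatrix}\right)=\text{rank}(M)$, since adjoining columns of $N$ leaves the rank unchanged exactly when those columns already lie in the column space of $M$. Applying this with $M=A^{m}$ and $N=S$ gives
\[
\text{rank}\left(\begin{bmatrix}A^{m} & S\end{bmatrix}\right)=\text{rank}(A^{m})\iff\rg(S)\subseteq\rg(A^{m})\iff (I-A^{m}(A^{m})^{\dagger})S=O.
\]

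Next I would invoke Theorem~\ref{relationthm}: under the standing hypothesis that the DCEPGI $\widehat{A}^{\ep}$ exists, condition (ii) of that theorem, namely $(I-A^{m}(A^{m})^{\dagger})S=O$, is equivalent to condition (i), namely $\widehat{A}^{\ep}=A^{\ep}-\epsilon A^{\ep}BA^{\ep}$. Chaining the two equivalences yields
\[
\text{rank}\left(\begin{bmatrix}A^{m} & S\end{bmatrix}\right)=\text{rank}(A^{m})\iff\widehat{A}^{\ep}=A^{\ep}-\epsilon A^{\ep}BA^{\ep},
\]
which is exactly the claim. The proof is therefore short: state the rank lemma (or cite it as a well-known fact about $\{1,3\}$-inverses), specialize it, and compose with Theorem~\ref{relationthm}.

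The only real content is the rank identity, and even that is routine: the main obstacle, if any, is making sure the ambient field/ring hypotheses are consistent. Here $A^{m}$ and $S$ are \emph{real} matrices (the standard and a derived part of $\widehat{A}^{m}$), so the classical real-matrix fact $\rg(S)\subseteq\rg(A^{m})\Leftrightarrow\text{rank}[A^{m}\ S]=\text{rank}(A^{m})$ applies verbatim and the dual structure never enters the rank computation. I would also note in passing that the remark preceding the theorem already flags this equivalence, so the write-up can simply say ``the result is immediate from Theorem~\ref{relationthm} together with the elementary fact that $\text{rank}\left(\begin{bmatrix}A^{m} & S\end{bmatrix}\right)=\text{rank}(A^{m})$ is equivalent to $\rg(S)\subseteq\rg(A^{m})$, i.e.\ to $(I-A^{m}(A^{m})^{\dagger})S=O$.'' No induction, no case analysis, and no dual-number bookkeeping is needed beyond what Theorem~\ref{relationthm} already supplies.
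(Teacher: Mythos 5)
Your proposal is correct and is essentially the paper's own argument: the paper dispatches this theorem with the single remark that $(I-A^{m}(A^{m})^{\dagger})S=O$ is equivalent to $\text{rank}\left(\begin{bmatrix}A^{m} & S\end{bmatrix}\right)=\text{rank}(A^{m})$, and then composes with the equivalence (i)$\Leftrightarrow$(ii) of Theorem~\ref{relationthm}. Your write-up just makes the same two steps explicit, correctly observing that $A^{m}$ and $S$ are real matrices so the classical rank/column-space fact applies directly.
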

\begin{theorem} \label{thm6.3}
Consider $\widehat{A}=A+\epsilon B \in \mathbb{D}\mathbb{R}^{n \times n}$ with $\text{Ind}(A)=m$. Denote $\widehat{A}^{m}={A}^{m}+\epsilon S$, where $S:=\sum_{i=1}^{m} A^{m-i}BA^{i-1}$. If $~(\widehat{A}^{m})^{\dagger}$ exists and $(\widehat{A}^{m})^{\dagger}=(A^{m})^{\dagger}-\epsilon (A^{m})^{\dagger}S(A^{m})^{\dagger}$, then the DCEPGI exists and $\widehat{A}^{\ep}=A^{\ep}-\epsilon A^{\ep}BA^{\ep}$.
\end{theorem}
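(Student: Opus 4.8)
The plan is to translate the hypothesis into one projector-level identity between the real matrices $A^m$ and $S$, and then feed that identity into the existence criterion of Theorem~\ref{equivalent} and the characterization of Theorem~\ref{relationthm}.

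\emph{Reducing the hypothesis.} The assumed equality $(\widehat{A}^{m})^{\dagger}=(A^{m})^{\dagger}-\epsilon (A^{m})^{\dagger}S(A^{m})^{\dagger}$ says exactly that $(\widehat{A}^{m})^{\dagger}$ coincides with the Moore--Penrose dual generalized inverse $(\widehat{A}^{m})^{P}$ of $\widehat{A}^{m}=A^{m}+\epsilon S$. Hence Lemma~\ref{lem2.3} applies and yields
$$
(I-A^{m}(A^{m})^{\dagger})S=O \qquad\text{and}\qquad (I-(A^{m})^{\dagger}A^{m})S=O .
$$
Only the first of these will be used; call it $(\star)$.

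\emph{Existence of $\widehat{A}^{\ep}$.} Since $\mathrm{Ind}(A)=m$ we have $\mathrm{Ind}(A^{m})=1$, so Lemma~\ref{lem2.8} applied to $A^{m}$ gives $(A^{m})^{\ep}=(A^{m})^{D}A^{m}(A^{m})^{\dagger}$, whence $A^{m}(A^{m})^{\ep}=A^{m}(A^{m})^{D}A^{m}(A^{m})^{\dagger}=A^{m}(A^{m})^{\dagger}$, both sides being the orthogonal projector onto $\rg(A^{m})$. Therefore $(\star)$ gives $(I-A^{m}(A^{m})^{\ep})S=O$, and in particular
$$
\bigl(I-A^{m}(A^{m})^{\ep}\bigr)S\bigl(I-(A^{m})^{\ep}A^{m}\bigr)=O ,
$$
which is condition (iii) of Theorem~\ref{equivalent}. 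Hence $\widehat{A}^{\ep}$ exists. With this in hand, Theorem~\ref{relationthm} applies: its standing hypothesis is met, and its condition (ii), namely $(I-A^{m}(A^{m})^{\dagger})S=O$, is precisely $(\star)$; by the implication (ii)$\Rightarrow$(i) of that theorem we conclude $\widehat{A}^{\ep}=A^{\ep}-\epsilon A^{\ep}BA^{\ep}$.

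The only step that is not pure bookkeeping is the middle one, and inside it the single fact that genuinely needs checking is the identification $A^{m}(A^{m})^{\ep}=A^{m}(A^{m})^{\dagger}$ (equivalently, that the ``core'' factor in Theorem~\ref{equivalent}(iii) is governed by the orthogonal projector onto $\rg(A^{m})$); everything else is direct substitution into already-proved statements. A self-contained alternative that bypasses Theorem~\ref{equivalent} is to insert $\widehat{X}=A^{\ep}-\epsilon A^{\ep}BA^{\ep}$ directly into the defining equations $(\widehat{A}\widehat{X})^{T}=\widehat{A}\widehat{X}$, $\widehat{A}\widehat{X}^{2}=\widehat{X}$, $\widehat{X}\widehat{A}^{m+1}=\widehat{A}^{m}$, split each into standard and infinitesimal parts, and verify the infinitesimal parts using both identities produced above; this proves existence and the formula at once, at the cost of a longer computation.
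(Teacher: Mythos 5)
Your proof is correct and follows the same backbone as the paper's: apply Lemma \ref{lem2.3} to convert the hypothesis into $(I-A^{m}(A^{m})^{\dagger})S=O$, then invoke the implication (ii)$\Rightarrow$(i) of Theorem \ref{relationthm} to obtain the formula $\widehat{A}^{\ep}=A^{\ep}-\epsilon A^{\ep}BA^{\ep}$. The one place where you genuinely go beyond the paper is the existence step. The paper's proof simply asserts that Theorem \ref{relationthm} ``implies the existence of DCEPGI,'' but that theorem carries the standing hypothesis that $\widehat{A}^{\ep}$ exists, so as written the paper's argument is circular on this point. You close that gap by verifying condition (iii) of Theorem \ref{equivalent} directly: since $\mathrm{Ind}(A^{m})\leq 1$, Lemma \ref{lem2.8} applied to $A^{m}$ gives $A^{m}(A^{m})^{\ep}=A^{m}(A^{m})^{\dagger}$, so your identity $(\star)$ yields $\bigl(I-A^{m}(A^{m})^{\ep}\bigr)S\bigl(I-(A^{m})^{\ep}A^{m}\bigr)=O$ and hence existence \emph{before} Theorem \ref{relationthm} is ever invoked. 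That is exactly the right repair, and it is the part of your write-up worth keeping; the self-contained alternative you sketch at the end (substituting $\widehat{X}=A^{\ep}-\epsilon A^{\ep}BA^{\ep}$ into the three defining equations) would also work but is unnecessary once the existence step is secured.
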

\begin{proof}
Given that $\text{Ind}(A)=m$ and $(\widehat{A}^{m})^{\dagger}=(A^{m})^{\dagger}-\epsilon (A^{m})^{\dagger}S(A^{m})^{\dagger}$, then using Lemma \ref{lem2.3}, $\left(I-A^{m}(A^{m})^{\dagger}\right)S=0$. In view of Theorem \ref{relationthm}, we get  $\left(I-A^{m}(A^{m})^{\dagger}\right)S=0$ and $\text{Ind}(A)=m$. Which implies  the existence of DCEPGI of $\widehat{A}$, and verifies the result $\widehat{A}^{\ep}=A^{\ep}-\epsilon A^{\ep}BA^{\ep}$.
\end{proof}
\begin{theorem} 
Consider $\widehat{A}=A+\epsilon B \in \mathbb{D}\mathbb{R}^{n \times n}$. Denote $\widehat{A}^{m}={A}^{m}+\epsilon S$, where $S:=\sum_{i=1}^{m} A^{m-i}BA^{i-1}$. If $\widehat{A}$ has a DDGI and $\widehat{A}^{D}=A^{D}-\epsilon A^{D}BA^{D}$, then DCEPGI exists and  $\widehat{A}^{\ep}=A^{\ep}-\epsilon A^{\ep}BA^{\ep}$.
\end{theorem}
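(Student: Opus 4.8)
The plan is to derive the hypothesis of Theorem~\ref{thm6.3} from the assumption that $\widehat{A}^{D}=A^{D}-\epsilon A^{D}BA^{D}$, and then invoke that theorem. The key observation is that Lemma~\ref{lemcepdecomp}-type identities (the real core-EP formula) give $\widehat{A}^{\ep}=\widehat{A}^{D}\widehat{A}^{m}(\widehat{A}^{m})^{\dagger}$ whenever the relevant inverses exist, so it suffices to control $(\widehat{A}^{m})^{\dagger}$. First I would recall from Lemma~1.5(v)/(i) (the characterization of the dual Drazin inverse) that the existence of $\widehat{A}^{D}$ already forces the existence of $(\widehat{A}^{m})^{\dagger}$; hence the standard-part index of $A$ equals $m$ and $(\widehat{A}^{m})^{\dagger}=(A^{m})^{\dagger}+\epsilon R$ for the $R$ written down in the proof of Theorem~\ref{compactformula}.

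The crux will be to show that the assumed special form $\widehat{A}^{D}=A^{D}-\epsilon A^{D}BA^{D}$ forces $(\widehat{A}^{m})^{\dagger}$ to take the analogous special form $(\widehat{A}^{m})^{\dagger}=(A^{m})^{\dagger}-\epsilon (A^{m})^{\dagger}S(A^{m})^{\dagger}$. I would do this by comparing dual parts in the identity $\widehat{A}^{D}=(\widehat{A}^{m})^{\dagger}\widehat{A}^{m}\widehat{A}^{D}$ (equivalently using $\widehat{A}^{m}=\widehat{A}^{m+1}\widehat{A}^{D}$, which follows from the $(1^m)$ equation), or more directly by using the known compact expression for $\widehat{A}^{D}$ in \eqref{A^D}: the vanishing of the correction terms $\sum_{i=0}^{m-1}(A^{D})^{i+2}BA^{i}(I-AA^{D})$ and $\sum_{i=0}^{m-1}(I-AA^{D})A^{i}B(A^{D})^{i+2}$ is equivalent, via the core-EP decomposition \eqref{Ahatdcomp} and the block form \eqref{Sdcomp} of $S$, to $N^{m-i}B_3T_1^{i-1}=O$ and the companion block identity. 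These are exactly the block conditions that make the off-diagonal/trailing blocks of $R$ vanish, i.e. that reduce $R$ to $-(A^{m})^{\dagger}S(A^{m})^{\dagger}$. So the argument is: translate $\widehat{A}^{D}=A^{D}-\epsilon A^{D}BA^{D}$ into block conditions on $B_3,B_4,N,T_1,T_2$ via \eqref{eqncEPD}--\eqref{Bdcomp}; observe these are precisely the block conditions characterizing $(I-A^{m}(A^{m})^{\dagger})S=O$ (by \eqref{eqn6.6}); conclude $(\widehat{A}^{m})^{\dagger}=(A^{m})^{\dagger}-\epsilon (A^{m})^{\dagger}S(A^{m})^{\dagger}$.

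Once that special form of $(\widehat{A}^{m})^{\dagger}$ is established, Theorem~\ref{thm6.3} applies verbatim: it yields both the existence of $\widehat{A}^{\ep}$ and the formula $\widehat{A}^{\ep}=A^{\ep}-\epsilon A^{\ep}BA^{\ep}$, which is the desired conclusion. Alternatively, and perhaps more cleanly, one can bypass $(\widehat{A}^{m})^{\dagger}$ altogether: substitute $\widehat{A}^{D}=A^{D}-\epsilon A^{D}BA^{D}$ and the special (block-reduced) form of $(\widehat{A}^{m})^{\dagger}$ directly into $\widehat{A}^{\ep}=\widehat{A}^{D}\widehat{A}^{m}(\widehat{A}^{m})^{\dagger}$ from Theorem~\ref{compactformula}, and compute the dual part, using $A^{\ep}=A^{D}A^{m}(A^{m})^{\dagger}$ (Lemma~\ref{lem2.8}) for the standard part; most of the cross terms cancel because of the orthogonality relations $A^{m}(A^{m})^{\dagger}$, $A^{D}A=AA^{D}$, and $A^{\ep}=AA^{\ep}A^{\ep}$, leaving $-A^{\ep}BA^{\ep}$.

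The main obstacle I anticipate is the bookkeeping in the second paragraph: verifying that the block conditions coming from ``$\widehat{A}^{D}$ has no correction term'' coincide exactly with the block conditions coming from ``$(I-A^{m}(A^{m})^{\dagger})S=O$.'' Both reduce, after passing through the core-EP decomposition, to statements about the single block sum $\sum_{i=1}^{m}N^{m-i}B_3T_1^{i-1}$ (together with the trailing $(n-t)\times(n-t)$ block), but matching the index shifts in \eqref{A^D} against those in \eqref{Sdcomp} requires care. Provided that identification goes through, the rest is a direct appeal to Theorem~\ref{thm6.3} (or Theorem~\ref{relationthm}, items (i)$\Leftrightarrow$(ii)).
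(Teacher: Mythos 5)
Your proposal follows essentially the same route as the paper: deduce from $\widehat{A}^{D}=A^{D}-\epsilon A^{D}BA^{D}$ that $(\widehat{A}^{m})^{\dagger}$ takes the MPDGI form $(A^{m})^{\dagger}-\epsilon (A^{m})^{\dagger}S(A^{m})^{\dagger}$ (equivalently, that $(I-A^{m}(A^{m})^{\dagger})S=O$), and then invoke Theorem~\ref{thm6.3}. The paper's own proof is a two-line assertion of exactly this implication chain, so your block-level verification of the key step is, if anything, more detailed than what the paper itself supplies.
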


\begin{proof} 
Since $\widehat{A}^{D}$ exists, and $\widehat{A}^{D}=A^{D}-\epsilon A^{D}BA^{D}$, then from Theorem \ref{thm6.3} $\text{Ind}(A)=m$, $~(\widehat{A}^{m})^{\dagger}$ exists and $(\widehat{A}^{m})^{\dagger}=(A^{m})^{\dagger}-\epsilon (A^{m})^{\dagger}S(A^{m})^{\dagger}$. Therefore, the DCEPGI of $\widehat{A}$ exists, and completes the proof of $\widehat{A}^{\ep}=A^{\ep}-\epsilon A^{\ep}BA^{\ep}$.
\end{proof}

Next, we present certain characteristics of dual CEP generalized inverse, that are analogous to those of square real matrices. The dual matrix $\widehat{A}=A+\epsilon B \in \mathbb{D}\mathbb{R}^{n \times n}$ has the following range and null spaces:

\begin{align}\label{6.7}
\rg(\widehat{A})=\{\widehat{y} \in \mathbb{D}\mathbb{R}^n : \widehat{y}=\widehat{A}\widehat{x}, \widehat{x} \in \mathbb{D}\mathbb{R}^n\}=\left\{A z+\epsilon(Aw+Bz): z, w \in \mathbb{R}^n\right\}, \\
 \label{6.8} \nl(\widehat{A})=\{\widehat{x} \in \mathbb{D}\mathbb{R}^n: \widehat{A}\widehat{x}=O\}=\left\{y+\epsilon z: Ay=O, Az+By=O: y, z \in \mathbb{R}^n\right\} .
\end{align}

\begin{theorem}\label{rangenull}
    Consider $\widehat{A}=A+\epsilon B \in \mathbb{D}\mathbb{R}^{n \times n}$ with $\text{Ind}(A)=m$. Assume that $\widehat{A}^{m}={A}^{m}+\epsilon S$, where $S:=\sum_{i=1}^{m} A^{m-i}BA^{i-1}$. If $\widehat{A}$ has a DCEPGI and $\widehat{A}^{\ep}=A^{\ep}-\epsilon A^{\ep}BA^{\ep}$, then  
    \begin{enumerate}[\rm(i)]
        \item $\rg(\widehat{A}^{\ep})=\rg(\widehat{A}^{m})$, \item $\nl(\widehat{A}^{\ep})=\nl((\widehat{A}^{m})^{*})$,
        \item $\rg(\widehat{A}^{m})\cap \nl((\widehat{A}^{m})^{*})=\{O\}$.
    \end{enumerate}
\end{theorem}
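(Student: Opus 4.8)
The plan is to work entirely inside the canonical form \eqref{Dualcepdcmp}
$$
\widehat{A}=\widehat{U}\begin{bmatrix}\widehat{T_1} & \widehat{T_2}\\ O & \widehat{N}\end{bmatrix}\widehat{U}^{T},\qquad
\widehat{A}^{\ep}=\widehat{U}\begin{bmatrix}\widehat{T_1}^{-1} & O\\ O & O\end{bmatrix}\widehat{U}^{T},
$$
where $\widehat{T_1}$ is a dual invertible $t\times t$ matrix, $\widehat{N}$ is $m$-dual nilpotent, and $\widehat{U}$ is dual unitary. Note that the extra hypothesis $\widehat{A}^{\ep}=A^{\ep}-\epsilon A^{\ep}BA^{\ep}$ is only needed to know that the DCEPGI has the particularly clean compact shape; for the range/null space statements it is really the structural form above that does the work, exactly as in the real core-EP case. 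First I would compute $\widehat{A}^{m}$ from \eqref{Dualcepdcmp}: since $\widehat{N}^{m}=O$ (because $\widehat{A}^{\ep}$ exists), one gets
$$
\widehat{A}^{m}=\widehat{U}\begin{bmatrix}\widehat{T_1}^{m} & \widehat{H}\\ O & O\end{bmatrix}\widehat{U}^{T},
\qquad \widehat{H}=\sum_{i=0}^{m-1}\widehat{T_1}^{\,i}\widehat{T_2}\widehat{N}^{\,m-i-1},
$$
so $\widehat{A}^{m}$ and $\widehat{A}^{\ep}$ have block rows supported on the same $t$ coordinates, with the $(1,1)$ block invertible in both cases.

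For (i), $\rg(\widehat{A}^{\ep})=\rg(\widehat{A}^{m})$: using the description \eqref{6.7} of the range of a dual matrix (or, more simply, the observation that for dual matrices $\rg(\widehat{M})=\rg(\widehat{M}\widehat{V})$ whenever $\widehat{V}$ is dual invertible, applied with $\widehat{V}=\widehat{U}^{T}$ and then reducing to the block-triangular core), I would show both ranges equal $\widehat{U}\cdot\rg\!\begin{bmatrix}\widehat{T_1}\\ O\end{bmatrix}$, i.e. the dual span of the first $t$ columns of $\widehat{U}$. The inclusion $\rg(\widehat{A}^{\ep})\subseteq\rg(\widehat{A}^{m})$ follows from $\widehat{A}^{\ep}=\widehat{A}^{\ep}(\widehat{A}^{m}\widehat{T_1}^{-m})$-type identities, or directly from $\widehat{A}^{\ep}=\widehat{A}^{D}\widehat{A}^{m}(\widehat{A}^{m})^{\dagger}$ (Theorem \ref{compactformula}), which gives $\rg(\widehat{A}^{\ep})\subseteq\rg(\widehat{A}^{D}\widehat{A}^{m})\subseteq\rg(\widehat{A}^{m})$ after noting $\widehat{A}^{D}\widehat{A}^{m}=\widehat{A}^{m}(\widehat{A}^{m})^{\#}$-style commutation in the core block; conversely $\widehat{A}^{m}=\widehat{A}^{\ep}\widehat{A}^{m+1}$ gives $\rg(\widehat{A}^{m})\subseteq\rg(\widehat{A}^{\ep})$. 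For (ii), $\nl(\widehat{A}^{\ep})=\nl((\widehat{A}^{m})^{*})$: since $\widehat{A}\widehat{A}^{\ep}=\widehat{U}\operatorname{diag}(I,O)\widehat{U}^{T}$ is the orthogonal-type dual projector onto $\rg(\widehat{A}^{m})$, I would identify $\nl(\widehat{A}^{\ep})=\nl(\widehat{A}\widehat{A}^{\ep})$ (from $\widehat{A}\widehat{A}^{\ep}$ being a $\{1,2\}$-inverse-type idempotent times $\widehat{A}^{\ep}$, i.e. $\widehat{A}^{\ep}=\widehat{A}^{\ep}\widehat{A}\widehat{A}^{\ep}$ and $\widehat{A}\widehat{A}^{\ep}\widehat{A}^{\ep}=\widehat{A}^{\ep}$ via $\widehat{A}\widehat{A}^{\ep\,2}=\widehat{A}^{\ep}$), and then use that $(\widehat{A}\widehat{A}^{\ep})^{T}=\widehat{A}\widehat{A}^{\ep}$ is the complementary projector to $\widehat{U}\operatorname{diag}(O,I)\widehat{U}^{T}$, whose kernel is exactly $\nl((\widehat{A}^{m})^{*})=\rg(\widehat{A}^{m})^{\perp}$ read off from the block form of $(\widehat{A}^{m})^{*}=\widehat{U}\operatorname{diag}((\widehat{T_1}^{m})^{*},\widehat{H}^{*};O,O)^{T}\widehat{U}^{T}$. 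Finally (iii) is a direct consequence of (i) and (ii) together with the projector identity: $\widehat{A}\widehat{A}^{\ep}$ fixes every vector of $\rg(\widehat{A}^{m})=\rg(\widehat{A}^{\ep})$ and annihilates every vector of $\nl((\widehat{A}^{m})^{*})=\nl(\widehat{A}^{\ep})$, so a common vector $\widehat{x}$ satisfies $\widehat{x}=\widehat{A}\widehat{A}^{\ep}\widehat{x}=O$; alternatively, read it off the block form, where $\rg(\widehat{A}^{m})$ lives in the top $t$ coordinates of $\widehat{U}^{T}(\cdot)$ and $\nl((\widehat{A}^{m})^{*})$ in the bottom $n-t$.

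The main obstacle I anticipate is \emph{not} the algebra of the projectors but the subtlety of range and null spaces \emph{over the dual ring}: $\mathbb{D}\mathbb{R}$ is not a field, $\epsilon$ is a zero divisor, so one cannot freely invert ``appreciable'' scalars and must argue with the explicit two-part descriptions \eqref{6.7}–\eqref{6.8}, checking that an equality of standard parts plus a controlled infinitesimal part really forces equality of the dual subspaces. Concretely, when I claim $\rg(\widehat{A}^{\ep})=\rg(\widehat{A}^{m})$ I must produce, for each generator on one side, an honest dual preimage on the other, which means solving a small system whose solvability uses precisely that $\widehat{T_1}$ is dual invertible and $\widehat{N}^{m}=O$; the bookkeeping in \eqref{6.7} (the $Aw+Bz$ term) is where care is needed. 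Once the dual-module subtlety in (i) is handled cleanly, (ii) follows by the same pattern applied to $(\widehat{A}^{m})^{*}$ and $\widehat{A}^{\ep}$ (whose null spaces are captured by the projector $I-\widehat{A}\widehat{A}^{\ep}$), and (iii) is immediate from the orthogonality of the two blocks. I would therefore spend the bulk of the write-up on a clean lemma-style treatment of (i) in the canonical form, and then dispatch (ii) and (iii) quickly.
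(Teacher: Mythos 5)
Your proposal is correct, and for parts (i) and (ii) it runs close to the paper's argument: the paper also gets $\rg(\widehat{A}^{m})\subseteq\rg(\widehat{A}^{\ep})$ from $\widehat{A}^{m}=\widehat{A}^{\ep}\widehat{A}^{m+1}$ and the reverse inclusion from $\widehat{A}^{\ep}=\widehat{A}^{m}(\widehat{A}^{\ep})^{m}\widehat{A}^{\ep}$, and for (ii) it uses the adjoint identity $\widehat{A}^{\ep}=\widehat{A}^{\ep}((\widehat{A}^{\ep})^{m})^{*}(\widehat{A}^{m})^{*}$ together with $\widehat{A}^{\ep}=\widehat{A}^{D}\widehat{A}^{m}(\widehat{A}^{m})^{\dagger}$, where you instead read everything off the block form of the projector $\widehat{A}\widehat{A}^{\ep}$ (both routes quietly lean on facts --- existence of $\widehat{A}^{D}$, respectively validity of the canonical form \eqref{Dualcepdcmp} --- that are not literally among the hypotheses, so neither is cleaner on that score; your fallback to the purely algebraic identities is the safer path). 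The genuine divergence is in (iii): the paper does exactly the hands-on computation you flag as the "main obstacle," taking a generic element of the intersection via the two-part descriptions \eqref{6.7}--\eqref{6.8}, using $A^{\ep}=P_{\rg(A^{m}),\nl((A^{m})^{*})}$ on the standard parts and the identity \eqref{6.9} to force the infinitesimal part into $\rg(A^{m})\cap\nl((A^{m})^{*})=\{O\}$. Your alternative --- any $\widehat{x}$ in the intersection lies in $\rg(\widehat{A}^{\ep})$ so $\widehat{A}\widehat{A}^{\ep}\widehat{x}=\widehat{x}$ by $\widehat{A}(\widehat{A}^{\ep})^{2}=\widehat{A}^{\ep}$, while $\widehat{x}\in\nl(\widehat{A}^{\ep})$ gives $\widehat{A}\widehat{A}^{\ep}\widehat{x}=O$, hence $\widehat{x}=O$ --- is correct, shorter, and sidesteps the dual-module bookkeeping entirely; its only cost is that (iii) becomes logically dependent on (i) and (ii), whereas the paper's version of (iii) is self-contained. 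What the paper's computation buys in exchange is an explicit picture of how the standard and infinitesimal parts of a vector in the intersection are successively killed, which is reused in spirit in the application theorem on the system \eqref{uniqlinear}.
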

\begin{proof}
    (i) Since DCEPGI of $\widehat{A}$ exists, by definition, $\widehat{A}^{m}=\widehat{A}^{\ep}\widehat{A}^{m+1}$. Hence, $\rg(\widehat{A}^{m})\subseteq \rg(\widehat{A}^{\ep})$. Again, since $\widehat{A}^{\ep}=\widehat{A}(\widehat{A}^{\ep})^{2}=(\widehat{A})^{m}(\widehat{A}^{\ep})^{m}\widehat{A}^{\ep}$, $\rg(\widehat{A}^{\ep})\subseteq \rg(\widehat{A}^{m})$. Therefore, $\rg(\widehat{A}^{\ep})=\rg(\widehat{A}^{m})$.

    (ii) Since the dual CEP inverse of $\widehat{A}$ exists, it can be seen that  $\widehat{A}^{\ep}=\widehat{A}^{\ep}\widehat{A}\widehat{A}^{\ep}=\widehat{A}^{\ep}\widehat{A}^{m}(\widehat{A}^{\ep})^{m}=\widehat{A}^{\ep}(\widehat{A}^{m}(\widehat{A}^{\ep})^{m})^{*}=\widehat{A}^{\ep}((\widehat{A}^{\ep})^{m})^{*}(\widehat{A}^{m})^{*}$, Thus $\nl((\widehat{A}^{m})^{*}) \subseteq \nl(\widehat{A}^{\ep})$. It follows from $\widehat{A}^{\ep}=\widehat{A}^{D} \widehat{A}^{m}(\widehat{A}^{m})^{\dagger}$ that $\nl(\widehat{A}^{\ep}) =\nl(\widehat{A}^{D} \widehat{A}^{m}(\widehat{A}^{m})^{\dagger}) \subseteq \nl(\widehat{A}^{m}(\widehat{A}^{m})^{\dagger})\subseteq \nl((\widehat{A}^{m})^{\dagger})=\nl((\widehat{A}^{m})^{*})$. Hence $\nl(\widehat{A}^{\ep})=\nl((\widehat{A}^{m})^{*})$.
     
    (iii) For any $\widehat{x} \in \rg(\widehat{A}^{m}) \cap \nl((\widehat{A}^{m})^{*})$, using \eqref{6.7} and \eqref{6.8}, there exist $y, z \in \mathbb{R}^n$ such that $\widehat{x}=\widehat{A}^{m}(y+\epsilon z)=A^{m} y+\epsilon(A^{m} z+Sy)$, and
    $$
    [(A^{m})^{*}+\epsilon S^{*}][A^{m} y+\epsilon(A^{m} z+Sy)]=(A^{m})^{*}A^{m}y+\epsilon[(A^{m})^{*}A^{m}z+((A^{m})^{*}S+S^{*}A^{m})y]=O.
    $$

    Hence $(A^{m})^{*}A^{m}y=O$ and $(A^{m})^{*}A^{m}z+((A^{m})^{*}S+S^{*}A^{m})y=O$.
    Since it is known that $A^{\ep}=P_{\rg(A^{m}),\nl((A^{m})^{*})}$, one can observe from $(A^{m})^{*}A^{m}y=0$ that $A^{m}y \in \rg(A^{m}) \cap \nl((A^{m})^{*})=\{O\}$. This, $A^{m}y=O$. Therefore, $O=(A^{m})^{*}A^{m}z+((A^{m})^{*}S+S^{*}A^{m})y=(A^{m})^{*}A^{m}z+(A^{m})^{*}Sy=(A^{m})^{*}[A^{m}z+Sy]$, i.e., $A^{m}z+Sy \in \nl((A^{m})^{*})$. Since $\widehat{A}^{\ep}$ exists, then from (iii) of Theorem \ref{rangenull}, we can write $\left(I-A A^{\ep}\right)S\left(I-A A^{\ep}\right)=O$, i.e.,
  \begin{equation}\label{6.9}
    S=AA^{\ep}S+SAA^{\ep}-AA^{\ep}SAA^{\ep}.
   \end{equation}

    Applying \eqref{6.9} into $A^{m}z+Sy$, we obtain
    $$
    \begin{aligned}
    A^{m}z+Sy & =A^{m}z+(AA^{\ep}S+SAA^{\ep}-AA^{\ep}SAA^{\ep}) y=A^{m}z+AA^{\ep}Sy \\
    & =A^{m}z+A^{m}(A^{\ep})^{m}Sy=A^{m}[z+(A^{\ep})^{m}Sy] \in \rg(A^{m}) .
    \end{aligned}
    $$

    Hence, $A^{m}z+Sy \in \rg(A^{m}) \cap \nl((A^{m})^{*})=\{O\}$, i.e., $A^{m}z+Sy=O$. Therefore, $\widehat{x}=A^{m} y+\epsilon(A^{m} z+Sy)=O$, which proves that $\rg(\widehat{A}^{m})\cap \nl((\widehat{A}^{m})^{*})=\{O\}$.
\end{proof}
We now present the reverse-order and forward-order laws for DCEPGI. We will first illustrate that neither reverse-order nor forward-order laws hold for the DCEPGI.
\begin{example}\rm
    Let $\widehat{C}=\begin{bmatrix}
        2 & 1 & 0 \\ -1 & 3 & 0 \\ -2 & 1 & 0
    \end{bmatrix}+\epsilon \begin{bmatrix}
        2 & 2 & 4 \\ 3 & -1 & 2 \\ -4 & -2 & -6
    \end{bmatrix}$ and $\widehat{D}=\begin{bmatrix}
        1 & -1 & 0 \\ 0 & 2 & 0 \\ -1 & 3 & 0
    \end{bmatrix}+ \epsilon \begin{bmatrix}
        3 & -4 & 3 \\ 1 & 0 & -1 \\ 1 & -5 & 4
    \end{bmatrix}$. Then, we get $\widehat{C}^{\ep}=\begin{bmatrix}
        \frac{1}{2} & \frac{1}{4} & 0 \\ 0 & 0 & 0 \\ \frac{-1}{2} & \frac{-1}{4} & 0
    \end{bmatrix}+\epsilon\begin{bmatrix}
        \frac{3}{8} & \frac{3}{16} & 0 \\ 0 & 0 & 0 \\ \frac{-3}{8} & \frac{-3}{16} & 0
    \end{bmatrix},~\widehat{D}^{\ep}=\begin{bmatrix}
        0 & 3 & -1 \\ 0 & 0 & 0 \\ -1 & 6 & -1
    \end{bmatrix}+\epsilon\begin{bmatrix}
        -7 & 36 & -5 \\ 0 & 0 & 0 \\ -13 & 12 & 11
    \end{bmatrix}$, and $\widehat{C}\widehat{D}=\begin{bmatrix}
        2 & 0 & -2 \\ 0 & 0 & 0 \\ -2 & 0 & 2
    \end{bmatrix}+\epsilon\begin{bmatrix}
        5 & 4 & 3 \\ 1 & 6 & -3 \\ -3 & -10 & -3
    \end{bmatrix}$. Therefore, $(\widehat{C}\widehat{D})^{\ep}=\begin{bmatrix}
         \frac{1}{8} & 0 & \frac{-1}{8} \\ 0 & 0 & 0 \\ \frac{-1}{8} & 0 & \frac{1}{8}
    \end{bmatrix}+\epsilon\begin{bmatrix}
        \frac{-1}{32} & 0 & \frac{1}{32} \\ 0 & 0 & 0 \\ \frac{1}{8} & 0 & \frac{-1}{32}
    \end{bmatrix}$, $\widehat{C}^{\ep}\widehat{D}^{\ep}=\begin{bmatrix}
        0 & \frac{3}{2} & \frac{-1}{2} \\ 0 & 0 & 0 \\ 0 & \frac{-3}{2} & \frac{1}{2}
    \end{bmatrix}+\epsilon\begin{bmatrix}
        \frac{-7}{2} & \frac{153}{8}& \frac{-23}{8} \\ 0 & 0 & 0 \\ \frac{7}{2} & \frac{-153}{8}& \frac{23}{8}
    \end{bmatrix}$, and $\widehat{D}^{\ep}\widehat{C}^{\ep}=\begin{bmatrix}
        \frac{1}{2} & \frac{1}{4} & 0 \\ 0 & 0 & 0 \\ 0 & 0 & 0
    \end{bmatrix}+\epsilon\begin{bmatrix}
        \frac{-5}{8} & \frac{-5}{16} & 0 \\ 0 & 0 & 0 \\ -1 & \frac{-1}{2} & 0
    \end{bmatrix}$. Hence it is clear that $(\widehat{C}\widehat{D})^{\ep} \neq \widehat{C}^{\ep}\widehat{D}^{\ep}\neq \widehat{D}^{\ep}\widehat{C}^{\ep}$.
\end{example} 
Next, we provide sufficient conditions for both the reverse and forward-order laws for DCEPGI.
\begin{theorem}
   Consider $\widehat{A}=A+\epsilon A_{0}$ and $\widehat{B}=B+\epsilon B_{0}$ such that the $DCEPGI$ of $\widehat{A}, \widehat{B}, \widehat{A}\widehat{B}$ exist. If $AB=BA,~AB^{*}=B^{*}A,~B^{\ep}A_{0}=A_{0}B^{\ep}$, and $A^{\ep}B_{0}=B_{0}A^{\ep}$, then $(\widehat{A} \widehat{B})^{\ep}=\widehat{B}^{\ep} \widehat{A}^{\ep}=\widehat{A}^{\ep} \widehat{B}^{\ep}$.
\end{theorem}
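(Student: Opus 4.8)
The plan is to prove the reverse-order law $(\widehat A\widehat B)^{\ep}=\widehat B^{\ep}\widehat A^{\ep}$ by checking that $\widehat Y:=\widehat B^{\ep}\widehat A^{\ep}$ satisfies the three defining equations of the dual core-EP inverse of $\widehat A\widehat B$ and then invoking the uniqueness of the DCEPGI established after Lemma \ref{lemsyst1}. Since the four hypotheses are symmetric under $\widehat A\leftrightarrow\widehat B$ (note that $AB^{*}=B^{*}A$ gives, by transposition, $A^{*}B=BA^{*}$), the identical verification run for $\widehat A^{\ep}\widehat B^{\ep}$ yields $(\widehat A\widehat B)^{\ep}=\widehat A^{\ep}\widehat B^{\ep}$ too, which completes the forward-order law.

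\textbf{Standard parts.} From $AB=BA$ and $AB^{*}=B^{*}A$ one gets $A^{*}B=BA^{*}$ and $A^{*}B^{*}=B^{*}A^{*}$, so each of $A,A^{*}$ commutes with each of $B,B^{*}$; hence $A^{m}$ and $(A^{m})^{*}$ commute with $B$ and $B^{*}$, and therefore $A^{\ep}=A^{D}A^{m}(A^{m})^{\dagger}$ (Lemma \ref{lem2.8}) commutes with $B$ and $B^{*}$, and symmetrically $B^{\ep}$ commutes with $A$ and $A^{*}$; in particular $A^{\ep}B^{\ep}=B^{\ep}A^{\ep}$. Taking $m$ at least as large as $\mathrm{Ind}(A),\mathrm{Ind}(B),\mathrm{Ind}(AB)$, the commuting reverse-order law for the Moore–Penrose inverse gives $\big((AB)^{m}\big)^{\dagger}=(A^{m}B^{m})^{\dagger}=(B^{m})^{\dagger}(A^{m})^{\dagger}$ and $(AB)^{D}=A^{D}B^{D}$, so from Lemma \ref{lem2.8}, after moving the commuting factors past each other, $(AB)^{\ep}=(AB)^{D}(AB)^{m}\big((AB)^{m}\big)^{\dagger}=A^{\ep}B^{\ep}=B^{\ep}A^{\ep}$. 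This settles the standard part of each matrix occurring in the statement.

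\textbf{Dual parts.} By Lemma \ref{lemsyst1} write $\widehat A^{\ep}=A^{\ep}+\epsilon P$ and $\widehat B^{\ep}=B^{\ep}+\epsilon Q$, where $P$ and $Q$ are the unique solutions of the systems \eqref{syst1} attached to $\widehat A$ and to $\widehat B$. The remaining two hypotheses say, together with the standard-part commutativities, that the constant dual matrix $A^{\ep}$ commutes with $\widehat B$ and $B^{\ep}$ commutes with $\widehat A$. The cleanest way I would carry out the dual computation is to pass to the core-EP canonical form \eqref{eqncEPD} of $A$: the hypotheses $AB=BA$, $AB^{*}=B^{*}A$ force $U^{T}BU=\mathrm{diag}(B_{1}',B_{4}')$ with $B_{1}'$ commuting with $T_{1}$ and $T_{1}^{*}$, and $A^{\ep}B_{0}=B_{0}A^{\ep}$ forces $U^{T}B_{0}U=\mathrm{diag}((B_{0})_{1},(B_{0})_{4})$ with $(B_{0})_{1}$ commuting with $T_{1}$; writing $\widehat A^{\ep}$ in this frame by \eqref{dualCEPexprsn} and using $B^{\ep}A_{0}=A_{0}B^{\ep}$ to tie the off-diagonal blocks of $A_{0}$ to $B^{\ep}$, the products $\widehat A\widehat B\,\widehat Y$, $\widehat A\widehat B\,\widehat Y^{2}$ and $\widehat Y(\widehat A\widehat B)^{m+1}$ become $2\times2$ block manipulations in which the $(1,1)$-corner is governed by the invertible dual block $\widehat{T_{1}}$ and every other block is annihilated by a suitable power of the nilpotent corner. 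Checking the three defining equations of $(\widehat A\widehat B)^{\ep}$ for $\widehat Y$ block by block and invoking uniqueness then gives $\widehat Y=(\widehat A\widehat B)^{\ep}$; running the same computation with the roles swapped shows $\widehat Y=\widehat A^{\ep}\widehat B^{\ep}$ as well.

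\textbf{Main obstacle.} The essential difficulty is that $\widehat A$ and $\widehat B$ themselves do \emph{not} dual-commute: $A_{0}$ need not commute with $B$, nor $B_{0}$ with $A$, so $(\widehat A\widehat B)^{m+1}\neq\widehat A^{m+1}\widehat B^{m+1}$ and the equation $\widehat Y(\widehat A\widehat B)^{m+1}=(\widehat A\widehat B)^{m}$ cannot be verified termwise. One must expand the infinitesimal part of $(\widehat A\widehat B)^{m+1}$ as $\sum_{i=0}^{m}(AB)^{i}\big(AB_{0}+A_{0}B\big)(AB)^{m-i}$ and collapse it against $\widehat B^{\ep}\widehat A^{\ep}$ using $\widehat A^{\ep}\widehat A^{m+1}=\widehat A^{m}$, $\widehat A^{\ep}=\widehat A(\widehat A^{\ep})^{2}$ (and their $\widehat B$-analogues) together with the commutation relations above; this $\epsilon$-level bookkeeping is where all four hypotheses are genuinely consumed. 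A related subtlety is that, although $A^{\ep}$ commutes with $\widehat B$, one does not automatically obtain that $A^{\ep}$ commutes with the dual transpose $\widehat B^{T}$ (the hypothesis $A^{\ep}B_{0}=B_{0}A^{\ep}$ only yields commutation of $(A^{\ep})^{T}$ with $B_{0}^{T}$), which is precisely why I would route the dual part through the canonical form of $A$ rather than through a purely formal commutativity propagation.
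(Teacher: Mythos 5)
Your route is genuinely different from the paper's. The paper never verifies the three defining equations for $\widehat{B}^{\ep}\widehat{A}^{\ep}$; instead it writes each of the three inverses in the compact form $\widehat{A}^{\ep}=A^{\ep}-\epsilon A^{\ep}A_{0}A^{\ep}$, $\widehat{B}^{\ep}=B^{\ep}-\epsilon B^{\ep}B_{0}B^{\ep}$, $(\widehat{A}\widehat{B})^{\ep}=(AB)^{\ep}-\epsilon (AB)^{\ep}(AB_{0}+A_{0}B)(AB)^{\ep}$, establishes $(AB)^{\ep}=A^{\ep}B^{\ep}=B^{\ep}A^{\ep}$ exactly as in your ``standard parts'' step, and then matches the $\epsilon$-coefficients by commuting $A^{\ep}$ past $B_{0}$ and $B^{\ep}$ past $A_{0}$ and using $A^{\ep}AA^{\ep}=A^{\ep}$, $B^{\ep}BB^{\ep}=B^{\ep}$. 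That proof is short, but it rests on the unproved premise that all three DCEPGIs have this special form; by Theorem \ref{relationthm} that form is equivalent to the extra condition $(I-A^{m}(A^{m})^{\dagger})S=O$, which is not among the hypotheses. Your plan avoids that premise by keeping the general dual parts $P$, $Q$ from \eqref{syst1} and arguing via the defining equations plus uniqueness, which is in principle the more honest route.

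What your proposal does not do is close the step you yourself identify as the crux. The verification of $\widehat{Y}(\widehat{A}\widehat{B})^{m+1}=(\widehat{A}\widehat{B})^{m}$, $(\widehat{A}\widehat{B})\widehat{Y}^{2}=\widehat{Y}$ and the symmetry condition for $\widehat{Y}=\widehat{B}^{\ep}\widehat{A}^{\ep}$ is only described, and the block bookkeeping is not routine: the canonical frames of $A$ and $B$ need not coincide; in $A$'s frame the hypothesis $B^{\ep}A_{0}=A_{0}B^{\ep}$ only intertwines the off-diagonal block $(A_{0})_{3}$ with the diagonal blocks of $B^{\ep}$ rather than annihilating it, so the dual part of $\widehat{A}^{\ep}$ in \eqref{dualCEPexprsn} retains nonzero off-diagonal blocks that must be tracked through all three products. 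Until that $\epsilon$-level computation is actually carried out (or the compact form of the three inverses is first derived from the hypotheses, which would reduce your argument to the paper's), the proposal remains a plausible plan rather than a proof.
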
 

\begin{proof}
    The DCEPGI of $\widehat{A}$ and $\widehat{B}$ are $\widehat{A}^{\ep}=A^{\ep}-\epsilon A^{\ep}A_{0}A^{\ep}$ and $\widehat{B}^{\ep}=B^{\ep}-\epsilon B^{\ep}B_{0}C^{\ep}$, respectively. So, $\widehat{A}^{\ep} \widehat{B}^{\ep}=A^{\ep} B^{\ep}-\epsilon(A^{\ep} B^{\ep}B_{0} B^{\ep}+A^{\ep} A_{0} A^{\ep} B^{\ep})$. Since $\widehat{A} \widehat{B}=(A+\epsilon A_{0})(B+\epsilon B_{0})=AB+\epsilon(AB_{0}+A_{0}B)$, so we get  $(\widehat{A} \widehat{B})^{\ep}=$ $(AB)^{\ep}-\epsilon\left((AB)^{\ep}(AB_{0}+A_{0}B)(AB)^{\ep}\right)$. Both the equalities $AB=BA,~A^{*}B=BA^{*}$ imply that $(AB)^{\ep}=A^{\ep}B^{\ep}=B^{\ep}A^{\ep}$. Furthermore, \begin{center}$(\widehat{A} \widehat{B})^{\ep}=(AB)^{\ep}-\epsilon\left((AB)^{\ep}(AB_{0}+A_{0}B)(AB)^{\ep}\right)=A^{\ep}B^{\ep}-\epsilon\left(A^{\ep}B^{\ep}(AB_{0}+A_{0}B)A^{\ep}B^{\ep}\right)=A^{\ep}B^{\ep}-\epsilon(A^{\ep}B^{\ep}AB_{0}A^{\ep}B^{\ep}+A^{\ep}B^{\ep}A_{0}BA^{\ep}B^{\ep})=A^{\ep}B^{\ep}-\epsilon(B^{\ep}A^{\ep}AA^{\ep}B_{0}B^{\ep}+A^{\ep}A_{0}B^{\ep}BB^{\ep}A^{\ep})=A^{\ep} B^{\ep}-\epsilon(A^{\ep} B^{\ep}B_{0} B^{\ep}+A^{\ep} A_{0} A^{\ep} B^{\ep})=\widehat{A}^{\ep} \widehat{B}^{\ep}$. 
    \end{center} Similarly, we can show that $(\widehat{A} \widehat{B})^{\ep}=\widehat{B}^{\ep} \widehat{A}^{\ep}$.
\end{proof} 
For $\widehat{A} \in \mathbb{D}\mathbb{R}^{n \times n}$, $\widehat{x} \in \mathbb{D}\mathbb{R}^{n}$, and $\widehat{b} \in \mathbb{D}\mathbb{R}^{n}$, the dual linear equation is written as \begin{equation}\label{lineqn}
    \widehat{A}\widehat{x}=\widehat{b}.
\end{equation} 
\begin{theorem}
Consider $\widehat{A}=A+ \epsilon B \in \mathbb{D}\mathbb{R}^{n \times n}$ with $\text{Ind}(A)=m$. If  both DCEPGI $\widehat{A}^{\ep}$ and DDGI $\widehat{A}^{D}$ exist, then the system 
\begin{equation}\label{eq7.2}
    \widehat{A}^{m+1}\widehat{x}= \widehat{A}^{2m} (\widehat{A}^{m})^{\dagger}\widehat{b}
\end{equation}
is consistent and the general solution is given by 
\[\widehat{x}= \widehat{A}^{\ep}\widehat{b}+(I-\widehat{A}^{D}\widehat{A})\widehat{y},\]
where $\widehat{y} \in \mathbb{D}\mathbb{R}^{n}$ is any dual vector.

\end{theorem}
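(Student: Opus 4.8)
\emph{Proof proposal.} The strategy is to split the argument into three stages: (a) exhibit a particular solution of \eqref{eq7.2}, which simultaneously proves consistency; (b) show that the solution set of the homogeneous equation $\widehat{A}^{m+1}\widehat{z}=O$ coincides with $\rg(I-\widehat{A}^D\widehat{A})$; and (c) assemble the general solution as ``particular plus homogeneous''.

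For stage (a) I would test $\widehat{x}_0=\widehat{A}^{\ep}\widehat{b}$. Under the present hypotheses Theorem \ref{compactformula} gives $\widehat{A}^{\ep}=\widehat{A}^D\widehat{A}^m(\widehat{A}^m)^\dagger$ (and $(\widehat{A}^m)^\dagger$ exists); moreover the DDGI commutes with $\widehat{A}$ and satisfies $\widehat{A}^D\widehat{A}^{m+1}=\widehat{A}^m$. Pushing $\widehat{A}^D$ to the left through the powers of $\widehat{A}$ and invoking this last relation yields $\widehat{A}^{m+1}\widehat{A}^D=\widehat{A}^m$, hence $\widehat{A}^{m+1}\widehat{A}^{\ep}=\widehat{A}^{m+1}\widehat{A}^D\widehat{A}^m(\widehat{A}^m)^\dagger=\widehat{A}^m\widehat{A}^m(\widehat{A}^m)^\dagger=\widehat{A}^{2m}(\widehat{A}^m)^\dagger$. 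Right-multiplying by $\widehat{b}$ shows $\widehat{A}^{m+1}\widehat{x}_0=\widehat{A}^{2m}(\widehat{A}^m)^\dagger\widehat{b}$, so \eqref{eq7.2} is consistent.

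For stage (b), since the difference of two solutions of \eqref{eq7.2} lies in $\nl(\widehat{A}^{m+1})$, the solution set is the coset $\widehat{x}_0+\nl(\widehat{A}^{m+1})$, and it remains to prove $\nl(\widehat{A}^{m+1})=\rg(I-\widehat{A}^D\widehat{A})$. The inclusion ``$\supseteq$'' is immediate because $\widehat{A}^{m+1}(I-\widehat{A}^D\widehat{A})=\widehat{A}^{m+1}-\widehat{A}^{m+1}\widehat{A}^D\widehat{A}=\widehat{A}^{m+1}-\widehat{A}^{m+1}=O$. For ``$\subseteq$'' I would first derive, by induction starting from $\widehat{A}^D=\widehat{A}^D\widehat{A}\widehat{A}^D=(\widehat{A}^D)^2\widehat{A}$ and using commutativity, the identity $\widehat{A}^D=(\widehat{A}^D)^{k+1}\widehat{A}^k$ for every $k\ge 0$; with $k=m$ and a right multiplication by $\widehat{A}$ this gives $\widehat{A}^D\widehat{A}=(\widehat{A}^D)^{m+1}\widehat{A}^{m+1}$. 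Therefore $\widehat{A}^{m+1}\widehat{z}=O$ forces $\widehat{A}^D\widehat{A}\widehat{z}=(\widehat{A}^D)^{m+1}\widehat{A}^{m+1}\widehat{z}=O$, whence $\widehat{z}=(I-\widehat{A}^D\widehat{A})\widehat{z}\in\rg(I-\widehat{A}^D\widehat{A})$.

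Stage (c) then simply records that $\widehat{x}$ solves \eqref{eq7.2} if and only if $\widehat{x}=\widehat{A}^{\ep}\widehat{b}+\widehat{z}$ with $\widehat{z}\in\nl(\widehat{A}^{m+1})=\rg(I-\widehat{A}^D\widehat{A})$, i.e. $\widehat{x}=\widehat{A}^{\ep}\widehat{b}+(I-\widehat{A}^D\widehat{A})\widehat{y}$ for some $\widehat{y}\in\mathbb{D}\mathbb{R}^n$, which is the claimed description. The only step needing genuine care is the ``$\subseteq$'' inclusion in stage (b): one must make sure the power manipulations for $\widehat{A}^D$ go through over the dual ring. Since they rely solely on commutativity of $\widehat{A}$ and $\widehat{A}^D$ and on the outer-inverse equation $\widehat{A}^D\widehat{A}\widehat{A}^D=\widehat{A}^D$ --- both part of the definition of the DDGI --- no real obstruction appears, and the remaining computations are routine.
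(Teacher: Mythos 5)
Your proposal is correct and follows essentially the same route as the paper: both verify that $\widehat{A}^{\ep}\widehat{b}$ solves the system via $\widehat{A}^{\ep}=\widehat{A}^{D}\widehat{A}^{m}(\widehat{A}^{m})^{\dagger}$ and $\widehat{A}^{m+1}\widehat{A}^{D}=\widehat{A}^{m}$, and both recover the general solution by showing that any solution $\widehat{z}$ satisfies $\widehat{A}^{D}\widehat{A}\widehat{z}=\widehat{A}^{\ep}\widehat{b}$ (equivalently, that the homogeneous solutions are exactly $\rg(I-\widehat{A}^{D}\widehat{A})$), using $(\widehat{A}^{D})^{m+1}\widehat{A}^{m+1}=\widehat{A}^{D}\widehat{A}$. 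Your phrasing as ``particular plus null space'' is only a cosmetic reorganization of the paper's argument.
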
 

\begin{proof}
Let $\widehat{y} \in \mathbb{D}\mathbb{R}^{n}$ and $\widehat{x}=\widehat{A}^{\ep}\widehat{b}+(I-\widehat{A}^{D}\widehat{A})\widehat{y}$. Then by applying $\widehat{A}^{\ep} =\widehat{A}^{D} \widehat{A}^{m} (\widehat{A}^{m})^{\dagger}$, we obtain
\begin{center}
$\widehat{A}^{m+1}\widehat{x}=\widehat{A}^{m+1}\widehat{A}^{\ep}\widehat{b}+\widehat{A}^{m+1}(I-\widehat{A}^{D}\widehat{A})\widehat{y}=\widehat{A}^{m+1}\widehat{A}^{\ep}\widehat{b}=\widehat{A}^{m+1}\widehat{A}^{D} \widehat{A}^{m} (\widehat{A}^{m})^{\dagger}\widehat{b}=\widehat{A}^{2m} (\widehat{A}^{m})^{\dagger}\widehat{b}$.
\end{center}
Thus $\widehat{x}$ satisfies the equation \eqref{eq7.2}. If $\widehat{z}$ is any arbitrary  solution of\eqref{eq7.2}, then by using $\widehat{A}^{\ep} =\widehat{A}^{D} \widehat{A}^{m} (\widehat{A}^{m})^{\dagger}$, we obtain 
\[\widehat{A}^{\ep}\widehat{b}=\widehat{A}^{D} \widehat{A}^{m} (\widehat{A}^{m})^{\dagger}\widehat{b}=(\widehat{A}^{D})^{m+1}\widehat{A}^{2m} (\widehat{A}^{m})^{\dagger}\widehat{b}=(\widehat{A}^{D})^{m+1}\widehat{A}^{m+1} \widehat{z}=\widehat{A}^{D}\widehat{A}\widehat{z}.\]
Now 
\[\widehat{z}=\widehat{A}^{\ep}\widehat{b}+\widehat{z}-\widehat{A}^{\ep}\widehat{b}=\widehat{A}^{\ep}\widehat{b}+\widehat{z}-\widehat{A}^{D}\widehat{A}\widehat{z}=\widehat{A}^{\ep}\widehat{b}+(I-\widehat{A}^{D}\widehat{A})\widehat{z}.\]

\end{proof}
Next, we have the following uniqueness result using the concept of Theorem \ref{rangenull}.
\begin{theorem} 
Consider $\widehat{A}=A+ \epsilon B \in \mathbb{D}\mathbb{R}^{n \times n}$ with $\text{Ind}(A)=m$. Assume that $\widehat{A}^{\ep}=A^{\ep}-\epsilon A^{\ep}BA^{\ep}$. Then the unique solution  of
\begin{equation}\label{uniqlinear}
    \widehat{A} \widehat{A}^{\ep}\widehat{x}=\widehat{A}^{\ep} \widehat{b}
\end{equation}
 in the range $\rg(\widehat{A}^{m})$, is given by $\widehat{A}^{\ep}b$.
\end{theorem}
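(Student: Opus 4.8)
The plan is to verify first that $\widehat{x}=\widehat{A}^{\ep}\widehat{b}$ actually solves \eqref{uniqlinear}, and then to establish uniqueness within $\rg(\widehat{A}^m)$ by using the range/null-space characterizations of Theorem \ref{rangenull}. For existence, I would substitute $\widehat{x}=\widehat{A}^{\ep}\widehat{b}$ into the left-hand side, giving $\widehat{A}\widehat{A}^{\ep}\widehat{A}^{\ep}\widehat{b}$; since $\widehat{A}\widehat{A}^{\ep}\widehat{A}^{\ep} = \widehat{A}(\widehat{A}^{\ep})^2 = \widehat{A}^{\ep}$ by the defining identity $\widehat{A}\widehat{A}^{\ep 2}=\widehat{A}^{\ep}$ of the dual CEP inverse, the left side equals $\widehat{A}^{\ep}\widehat{b}$, which is exactly the right-hand side. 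This also shows $\widehat{x}=\widehat{A}^{\ep}\widehat{b}\in\rg(\widehat{A}^{\ep})=\rg(\widehat{A}^m)$ by Theorem \ref{rangenull}(i), so the candidate solution does lie in the prescribed range.

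For uniqueness, suppose $\widehat{x}_1,\widehat{x}_2\in\rg(\widehat{A}^m)$ both satisfy \eqref{uniqlinear}, and set $\widehat{w}=\widehat{x}_1-\widehat{x}_2\in\rg(\widehat{A}^m)$. Subtracting the two equations yields $\widehat{A}\widehat{A}^{\ep}\widehat{w}=O$. The key is to show that $\widehat{A}\widehat{A}^{\ep}$ acts as the identity on $\rg(\widehat{A}^m)$. Indeed, $\widehat{A}\widehat{A}^{\ep}\widehat{A}^m = \widehat{A}\,\widehat{A}^{\ep}\widehat{A}^{m+1}\cdot(\text{something})$ — more directly, since $\widehat{A}^m = \widehat{A}^{\ep}\widehat{A}^{m+1}$, we have $\widehat{A}\widehat{A}^{\ep}\widehat{A}^m = \widehat{A}\widehat{A}^{\ep}\widehat{A}^{\ep}\widehat{A}^{m+1} = \widehat{A}^{\ep}\widehat{A}^{m+1} = \widehat{A}^m$, again using $\widehat{A}(\widehat{A}^{\ep})^2=\widehat{A}^{\ep}$. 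Writing $\widehat{w}=\widehat{A}^m\widehat{u}$ for some $\widehat{u}\in\mathbb{D}\mathbb{R}^n$, we then get $\widehat{w}=\widehat{A}\widehat{A}^{\ep}\widehat{A}^m\widehat{u}=\widehat{A}\widehat{A}^{\ep}\widehat{w}=O$, so $\widehat{x}_1=\widehat{x}_2$.

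The main obstacle I anticipate is the uniqueness argument, specifically the representability step $\widehat{w}=\widehat{A}^m\widehat{u}$: membership in $\rg(\widehat{A}^m)$ as defined in \eqref{6.7} means $\widehat{w}=\widehat{A}^m\widehat{u}$ for a dual vector $\widehat{u}$, so this is immediate, but one must be careful that the identity $\widehat{A}\widehat{A}^{\ep}\widehat{A}^m=\widehat{A}^m$ holds as a genuine dual-matrix identity (both standard and infinitesimal parts), which follows purely formally from the three defining equations of $\widehat{A}^{\ep}$ and requires no appeal to the hypothesis $\widehat{A}^{\ep}=A^{\ep}-\epsilon A^{\ep}BA^{\ep}$. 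In fact that hypothesis seems needed only to guarantee $\widehat{A}^{\ep}$ exists and perhaps to invoke Theorem \ref{rangenull}; the core computation is hypothesis-free. I would close by remarking that $\widehat{A}\widehat{A}^{\ep}=P_{\rg(\widehat{A}^m),\,\nl((\widehat{A}^m)^{*})}$ is precisely the oblique projector onto $\rg(\widehat{A}^m)$, which makes both existence and uniqueness transparent and mirrors the real-matrix case.
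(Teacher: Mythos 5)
Your proposal is correct, and the existence half (substituting $\widehat{A}^{\ep}\widehat{b}$ and using $\widehat{A}(\widehat{A}^{\ep})^{2}=\widehat{A}^{\ep}$, then invoking Theorem \ref{rangenull}(i) for membership in $\rg(\widehat{A}^{m})$) matches the paper. Where you genuinely diverge is in the uniqueness step. The paper argues via null spaces: from $\widehat{A}\widehat{A}^{\ep}(\widehat{w}-\widehat{A}^{\ep}\widehat{b})=O$ it places the difference in $\nl(\widehat{A}\widehat{A}^{\ep})=\nl(\widehat{A}^{\ep})=\nl((\widehat{A}^{m})^{*})$ and then kills it with the intersection property $\rg(\widehat{A}^{m})\cap\nl((\widehat{A}^{m})^{*})=\{O\}$ of Theorem \ref{rangenull}(iii). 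You instead show directly that $\widehat{A}\widehat{A}^{\ep}$ acts as the identity on $\rg(\widehat{A}^{m})$, via the purely formal identity $\widehat{A}\widehat{A}^{\ep}\widehat{A}^{m}=\widehat{A}(\widehat{A}^{\ep})^{2}\widehat{A}^{m+1}=\widehat{A}^{\ep}\widehat{A}^{m+1}=\widehat{A}^{m}$, and then write $\widehat{w}=\widehat{A}^{m}\widehat{u}$ (legitimate, since that is exactly how $\rg(\widehat{A}^{m})$ is defined in \eqref{6.7}) to get $\widehat{w}=\widehat{A}\widehat{A}^{\ep}\widehat{w}=O$. Your route is more elementary: it uses only the three defining equations of the dual CEP inverse and bypasses parts (ii) and (iii) of Theorem \ref{rangenull} entirely, which also substantiates your observation that the hypothesis $\widehat{A}^{\ep}=A^{\ep}-\epsilon A^{\ep}BA^{\ep}$ is not needed for the uniqueness computation, only for the paper's chosen invocation of Theorem \ref{rangenull}. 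What the paper's approach buys in exchange is the geometric picture made explicit --- the decomposition $\mathbb{D}\mathbb{R}^{n}\supseteq\rg(\widehat{A}^{m})\oplus\nl((\widehat{A}^{m})^{*})$ and the identification of $\widehat{A}\widehat{A}^{\ep}$ as the corresponding projector --- which you only mention as a closing remark but do not rely on.
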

\begin{proof} 
From the existence of $\widehat{A}^{\ep}$, we clearly obtain  that $\widehat{A}^{\ep} \widehat{b}$ is a solution to the system \eqref{uniqlinear}.
By (i) of Theorem \ref{rangenull}, $\widehat{A}^{\ep} \widehat{b} \in \rg(\widehat{A}^{\ep})=\rg(\widehat{A}^{m})$. Consider $\widehat{w}$ be another solution of \eqref{uniqlinear} in $\rg(\widehat{A}^{m})$. Then $\widehat{w}-\widehat{A}^{\ep} \widehat{b} \in \rg(\widehat{A}^{m})$. Since $\widehat{w}$ and $\widehat{A}^{\ep} \widehat{b}$ are solutions to \eqref{uniqlinear}, so we have $\widehat{A}\widehat{A}^{\ep}(\widehat{w}-\widehat{A}^{\ep} \widehat{b})=\mathbf{0}$. It follows from $\nl(\widehat{A}\widehat{A}^{\ep})= \nl(\widehat{A}^{\ep})=\nl((\widehat{A}^{m})^{*})$ that  $\widehat{w}-\widehat{A}^{\ep} \widehat{b} \in \nl((\widehat{A}^{m})^{*})$. Therefore, by (iii) of Theorem \ref{rangenull}, $\widehat{w}-\widehat{A}^{\ep} \widehat{b} \in \rg(\widehat{A}^{m})\cap \nl((\widehat{A}^{m})^{*})=\{\mathbf{0}\}$, i.e., $\widehat{w}=\widehat{A}^{\ep} \widehat{b}$.
\end{proof}

\section{Conclusion}\label{sec9}

This study  explored the dual core-EP generalized inverse of dual real matrices. It demonstrates the existence and uniqueness of this inverse, proposes a unique dual decomposition, provides similar characterizations, a compact formula, and shows its application to linear dual equations.

Some potential future research directions related to our work include:
\begin{itemize}
    \item Developing iterative methods and perturbation bounds for dual generalized inverses.
    \item Investigating binary relations on different dual generalized inverses using this newly introduced dual decomposition.
    
\end{itemize}

\section*{Declarations}

\begin{itemize}
\item Funding: N\'estor Thome expresses his gratitude for the support received from the Universidad Nacional de La Pampa, Facultad de Ingenier\'ia (Grant Resol. Nro. 135/19), Universidad Nacional del Sur (Grant PGI 24/ZL22), and  Ministerio de Ciencia e Innovaci\'on  (Grant Red de Excelencia RED2022-134176-T), Spain.
\item Conflict of interest/Competing interests:  The authors have stated that no potential conflicts of interest could have influenced the outcomes or interpretations presented in this work. They affirm their commitment to transparency and integrity in their research.

\item Data availability: In the current study, it's important to note that no datasets were produced or subjected to analysis.

\end{itemize}


\bibliography{sn-bibliography}


\begin{thebibliography}{24}
\ifx \bisbn   \undefined \def \bisbn  #1{ISBN #1}\fi
\ifx \binits  \undefined \def \binits#1{#1}\fi
\ifx \bauthor  \undefined \def \bauthor#1{#1}\fi
\ifx \batitle  \undefined \def \batitle#1{#1}\fi
\ifx \bjtitle  \undefined \def \bjtitle#1{#1}\fi
\ifx \bvolume  \undefined \def \bvolume#1{\textbf{#1}}\fi
\ifx \byear  \undefined \def \byear#1{#1}\fi
\ifx \bissue  \undefined \def \bissue#1{#1}\fi
\ifx \bfpage  \undefined \def \bfpage#1{#1}\fi
\ifx \blpage  \undefined \def \blpage #1{#1}\fi
\ifx \burl  \undefined \def \burl#1{\textsf{#1}}\fi
\ifx \doiurl  \undefined \def \doiurl#1{\url{https://doi.org/#1}}\fi
\ifx \betal  \undefined \def \betal{\textit{et al.}}\fi
\ifx \binstitute  \undefined \def \binstitute#1{#1}\fi
\ifx \binstitutionaled  \undefined \def \binstitutionaled#1{#1}\fi
\ifx \bctitle  \undefined \def \bctitle#1{#1}\fi
\ifx \beditor  \undefined \def \beditor#1{#1}\fi
\ifx \bpublisher  \undefined \def \bpublisher#1{#1}\fi
\ifx \bbtitle  \undefined \def \bbtitle#1{#1}\fi
\ifx \bedition  \undefined \def \bedition#1{#1}\fi
\ifx \bseriesno  \undefined \def \bseriesno#1{#1}\fi
\ifx \blocation  \undefined \def \blocation#1{#1}\fi
\ifx \bsertitle  \undefined \def \bsertitle#1{#1}\fi
\ifx \bsnm \undefined \def \bsnm#1{#1}\fi
\ifx \bsuffix \undefined \def \bsuffix#1{#1}\fi
\ifx \bparticle \undefined \def \bparticle#1{#1}\fi
\ifx \barticle \undefined \def \barticle#1{#1}\fi
\bibcommenthead
\ifx \bconfdate \undefined \def \bconfdate #1{#1}\fi
\ifx \botherref \undefined \def \botherref #1{#1}\fi
\ifx \url \undefined \def \url#1{\textsf{#1}}\fi
\ifx \bchapter \undefined \def \bchapter#1{#1}\fi
\ifx \bbook \undefined \def \bbook#1{#1}\fi
\ifx \bcomment \undefined \def \bcomment#1{#1}\fi
\ifx \oauthor \undefined \def \oauthor#1{#1}\fi
\ifx \citeauthoryear \undefined \def \citeauthoryear#1{#1}\fi
\ifx \endbibitem  \undefined \def \endbibitem {}\fi
\ifx \bconflocation  \undefined \def \bconflocation#1{#1}\fi
\ifx \arxivurl  \undefined \def \arxivurl#1{\textsf{#1}}\fi
\csname PreBibitemsHook\endcsname

\bibitem[\protect\citeauthoryear{Pennestr\`{i} and Stefanelli}{2007}]{dualno}
\begin{barticle}
\bauthor{\bsnm{Pennestr\`{i}}, \binits{E.}},
\bauthor{\bsnm{Stefanelli}, \binits{R.}}:
\batitle{Linear algebra and numerical algorithms using dual numbers}.
\bjtitle{Multibody Syst. Dyn.}
\bvolume{18}(\bissue{3}),
\bfpage{323}--\blpage{344}
(\byear{2007})
\doiurl{10.1007/s11044-007-9088-9}
\end{barticle}
\endbibitem

\bibitem[\protect\citeauthoryear{Angeles}{1998}]{angeles1998application}
\begin{botherref}
\oauthor{\bsnm{Angeles}, \binits{J.}}:
The application of dual algebra to kinematic analysis.
Computational Methods in Mechanical Systems: Mechanism Analysis, Synthesis, and
  Optimization,
3--32
(1998)
\end{botherref}
\endbibitem

\bibitem[\protect\citeauthoryear{Angeles}{2012}]{angeles2012dual}
\begin{bchapter}
\bauthor{\bsnm{Angeles}, \binits{J.}}:
\bctitle{The dual generalized inverses and their applications in kinematic
  synthesis}.
In: \bbtitle{Latest Advances in Robot Kinematics},
pp. \bfpage{1}--\blpage{10}
(\byear{2012}).
\bcomment{Springer}
\end{bchapter}
\endbibitem

\bibitem[\protect\citeauthoryear{Jin and Xiaorong}{2010}]{jin2010application}
\begin{bchapter}
\bauthor{\bsnm{Jin}, \binits{Y.}},
\bauthor{\bsnm{Xiaorong}, \binits{W.}}:
\bctitle{The application of the dual number methods to scara kinematics}.
In: \bbtitle{2010 International Conference on Mechanic Automation and Control
  Engineering},
pp. \bfpage{3871}--\blpage{3874}
(\byear{2010}).
\bcomment{IEEE}
\end{bchapter}
\endbibitem

\bibitem[\protect\citeauthoryear{Gu and Luh}{1987}]{gu1987dual}
\begin{barticle}
\bauthor{\bsnm{Gu}, \binits{Y.-L.}},
\bauthor{\bsnm{Luh}, \binits{J.}}:
\batitle{Dual-number transformation and its applications to robotics}.
\bjtitle{IEEE Journal on Robotics and Automation}
\bvolume{3}(\bissue{6}),
\bfpage{615}--\blpage{623}
(\byear{1987})
\end{barticle}
\endbibitem

\bibitem[\protect\citeauthoryear{Hei}{1986}]{heibeta1986homogeneous}
\begin{barticle}
\bauthor{\bsnm{Hei}, \binits{H.}}:
\batitle{Homogeneous and dual matrices for treating the kinematic problem of
  robots}.
\bjtitle{IFAC Proceedings Volumes}
\bvolume{19}(\bissue{14}),
\bfpage{51}--\blpage{55}
(\byear{1986})
\end{barticle}
\endbibitem

\bibitem[\protect\citeauthoryear{de~Falco et~al.}{2018}]{de2018generalized}
\begin{barticle}
\bauthor{\bsnm{Falco}, \binits{D.}},
\bauthor{\bsnm{Pennestr{\`\i}}, \binits{E.}},
\bauthor{\bsnm{Udwadia}, \binits{F.E.}}:
\batitle{On generalized inverses of dual matrices}.
\bjtitle{Mechanism and Machine Theory}
\bvolume{123},
\bfpage{89}--\blpage{106}
(\byear{2018})
\end{barticle}
\endbibitem

\bibitem[\protect\citeauthoryear{Udwadia}{2021}]{udwadia2021does}
\begin{barticle}
\bauthor{\bsnm{Udwadia}, \binits{F.E.}}:
\batitle{When does a dual matrix have a dual generalized inverse?}
\bjtitle{Symmetry}
\bvolume{13}(\bissue{8}),
\bfpage{1386}
(\byear{2021})
\end{barticle}
\endbibitem

\bibitem[\protect\citeauthoryear{Pennestr{\`\i}
  et~al.}{2018}]{pennestri2018moore}
\begin{barticle}
\bauthor{\bsnm{Pennestr{\`\i}}, \binits{E.}},
\bauthor{\bsnm{Valentini}, \binits{P.}},
\bauthor{\bsnm{De~Falco}, \binits{D.}}:
\batitle{The moore--penrose dual generalized inverse matrix with application to
  kinematic synthesis of spatial linkages}.
\bjtitle{Journal of Mechanical Design}
\bvolume{140}(\bissue{10}),
\bfpage{102303}
(\byear{2018})
\end{barticle}
\endbibitem

\bibitem[\protect\citeauthoryear{Udwadia et~al.}{2020}]{udwadia2020all}
\begin{barticle}
\bauthor{\bsnm{Udwadia}, \binits{F.E.}},
\bauthor{\bsnm{Pennestri}, \binits{E.}},
\bauthor{\bsnm{Falco}, \binits{D.}}:
\batitle{Do all dual matrices have dual {M}oore--{P}enrose generalized
  inverses?}
\bjtitle{Mechanism and Machine Theory}
\bvolume{151},
\bfpage{103878}
(\byear{2020})
\end{barticle}
\endbibitem

\bibitem[\protect\citeauthoryear{Udwadia}{2021}]{udwadia2021}
\begin{barticle}
\bauthor{\bsnm{Udwadia}, \binits{F.E.}}:
\batitle{Dual generalized inverses and their use in solving systems of linear
  dual equations}.
\bjtitle{Mechanism and Machine Theory}
\bvolume{156},
\bfpage{104158}
(\byear{2021})
\end{barticle}
\endbibitem

\bibitem[\protect\citeauthoryear{Wang}{2021}]{wang2021characterizations}
\begin{barticle}
\bauthor{\bsnm{Wang}, \binits{H.}}:
\batitle{Characterizations and properties of the {MPDGI} and {DMPGI}}.
\bjtitle{Mechanism and Machine Theory}
\bvolume{158},
\bfpage{104212}
(\byear{2021})
\end{barticle}
\endbibitem

\bibitem[\protect\citeauthoryear{Wang and Gao}{2023}]{wang2023dual}
\begin{barticle}
\bauthor{\bsnm{Wang}, \binits{H.}},
\bauthor{\bsnm{Gao}, \binits{J.}}:
\batitle{The dual index and dual core generalized inverse}.
\bjtitle{Open Math.}
\bvolume{21}(\bissue{1}),
\bfpage{20220592}--\blpage{23}
(\byear{2023})
\doiurl{10.1515/math-2022-0592}
\end{barticle}
\endbibitem

\bibitem[\protect\citeauthoryear{Liu and Ma}{2022}]{liu}
\begin{barticle}
\bauthor{\bsnm{Liu}, \binits{Y.}},
\bauthor{\bsnm{Ma}, \binits{H.}}:
\batitle{Dual core generalized inverse of third-order dual tensor based on the
  {T}-product}.
\bjtitle{Comput. Appl. Math.}
\bvolume{41}(\bissue{8}),
\bfpage{391}--\blpage{28}
(\byear{2022})
\end{barticle}
\endbibitem

\bibitem[\protect\citeauthoryear{Zhong and Zhang}{2022}]{dualgroup}
\begin{barticle}
\bauthor{\bsnm{Zhong}, \binits{J.}},
\bauthor{\bsnm{Zhang}, \binits{Y.}}:
\batitle{Dual group inverses of dual matrices and their applications in solving
  systems of linear dual equations}.
\bjtitle{AIMS Math.}
\bvolume{7}(\bissue{5}),
\bfpage{7606}--\blpage{7624}
(\byear{2022})
\doiurl{10.3934/math.2022427}
\end{barticle}
\endbibitem

\bibitem[\protect\citeauthoryear{Zhong and Zhang}{2023}]{dualDrazin}
\begin{barticle}
\bauthor{\bsnm{Zhong}, \binits{J.}},
\bauthor{\bsnm{Zhang}, \binits{Y.}}:
\batitle{Dual {D}razin inverses of dual matrices and dual {D}razin-inverse
  solutions of systems of linear dual equations}.
\bjtitle{Filomat}
\bvolume{37}(\bissue{10}),
\bfpage{3075}--\blpage{3089}
(\byear{2023})
\doiurl{10.2298/fil2303863t}
\end{barticle}
\endbibitem

\bibitem[\protect\citeauthoryear{Wang et~al.}{2024a}]{dualC-Ndecomp}
\begin{barticle}
\bauthor{\bsnm{Wang}, \binits{H.}},
\bauthor{\bsnm{Jiang}, \binits{T.}},
\bauthor{\bsnm{Ling}, \binits{Q.}},
\bauthor{\bsnm{Wei}, \binits{Y.}}:
\batitle{Dual core-nilpotent decomposition and dual binary relation}.
\bjtitle{Linear Algebra Appl.}
\bvolume{684},
\bfpage{127}--\blpage{157}
(\byear{2024})
\doiurl{10.1016/j.laa.2023.12.014}
\end{barticle}
\endbibitem

\bibitem[\protect\citeauthoryear{Wang et~al.}{2024b}]{wei}
\begin{barticle}
\bauthor{\bsnm{Wang}, \binits{H.}},
\bauthor{\bsnm{Cui}, \binits{C.}},
\bauthor{\bsnm{Wei}, \binits{Y.}}:
\batitle{The perturbation of {D}razin inverse and dual {D}razin inverse}.
\bjtitle{Spec. Matrices}
\bvolume{12},
\bfpage{20230110}--\blpage{19}
(\byear{2024})
\doiurl{10.1515/spma-2023-0110}
\end{barticle}
\endbibitem

\bibitem[\protect\citeauthoryear{Wang et~al.}{2023}]{wang-rr-deco}
\begin{barticle}
\bauthor{\bsnm{Wang}, \binits{H.}},
\bauthor{\bsnm{Cui}, \binits{C.}},
\bauthor{\bsnm{Liu}, \binits{X.}}:
\batitle{Dual {$r$}-rank decomposition and its applications}.
\bjtitle{Comput. Appl. Math.}
\bvolume{42}(\bissue{8}),
\bfpage{349}--\blpage{17}
(\byear{2023})
\doiurl{10.1007/s40314-023-02490-9}
\end{barticle}
\endbibitem

\bibitem[\protect\citeauthoryear{Pennestr\`{i} and
  Valentini}{2009}]{lineraalgorithm}
\begin{bchapter}
\bauthor{\bsnm{Pennestr\`{i}}, \binits{E.}},
\bauthor{\bsnm{Valentini}, \binits{P.P.}}:
\bctitle{Linear dual algebra algorithms and their application to kinematics}.
In: \bbtitle{Multibody Dynamics}.
\bsertitle{Comput. Methods Appl. Sci.},
vol. \bseriesno{12},
pp. \bfpage{207}--\blpage{229}.
\bpublisher{Springer}, \blocation{???}
(\byear{2009})
\end{bchapter}
\endbibitem

\bibitem[\protect\citeauthoryear{Wang}{2016}]{WangcEPD}
\begin{barticle}
\bauthor{\bsnm{Wang}, \binits{H.}}:
\batitle{Core-{EP} decomposition and its applications}.
\bjtitle{Linear Algebra Appl.}
\bvolume{508},
\bfpage{289}--\blpage{300}
(\byear{2016})
\doiurl{10.1016/j.laa.2016.08.008}
\end{barticle}
\endbibitem

\bibitem[\protect\citeauthoryear{D~E et~al.}{2020}]{ferryaDrazin}
\begin{barticle}
\bauthor{\bsnm{D~E}, \binits{F.}},
\bauthor{\bsnm{Levis}},
\bauthor{\bsnm{N}, \binits{T.}}:
\batitle{Characterizations of {$k$}-commutative equalities for some outer
  generalized inverses}.
\bjtitle{Linear Multilinear Algebra}
\bvolume{68},
\bfpage{177}--\blpage{192}
(\byear{2020})
\end{barticle}
\endbibitem

\bibitem[\protect\citeauthoryear{Gao and Chen}{2018}]{Gao2017}
\begin{barticle}
\bauthor{\bsnm{Gao}, \binits{Y.}},
\bauthor{\bsnm{Chen}, \binits{J.}}:
\batitle{Pseudo core inverses in rings with involution}.
\bjtitle{Comm. Algebra}
\bvolume{46}(\bissue{1}),
\bfpage{38}--\blpage{50}
(\byear{2018})
\end{barticle}
\endbibitem

\bibitem[\protect\citeauthoryear{Manjunatha~Prasad and
  Mohana}{2014}]{PrasadMo14}
\begin{barticle}
\bauthor{\bsnm{Manjunatha~Prasad}, \binits{K.}},
\bauthor{\bsnm{Mohana}, \binits{K.S.}}:
\batitle{Core-{EP} inverse}.
\bjtitle{Linear Multilinear Algebra}
\bvolume{62}(\bissue{6}),
\bfpage{792}--\blpage{802}
(\byear{2014})
\doiurl{10.1080/03081087.2013.791690}
\end{barticle}
\endbibitem

\end{thebibliography}

\end{document}